\documentclass[10pt, a4paper, oneside]{amsart}

\usepackage[stretch=10]{microtype}
\usepackage{hyperref}

\providecommand{\href}[2]{#2}
\providecommand{\texorpdfstring}[2]{#1}

\providecommand*{\backref}{}
\providecommand*{\backrefalt}{}
\renewcommand*{\backref}[1]{}
\renewcommand*{\backrefalt}[4]{%
	\ifcase #1 %
	\or
	  Cited page #2.
	\else
	  Cited pages #2.
	\fi
}

\newcommand{\N}{\mathbb{N}}
\newcommand{\Z}{\mathbb{Z}}
\newcommand{\R}{\mathbb{R}}
\newcommand{\C}{\mathbb{C}}
\newcommand{\HH}{\mathbb{H}}
\newcommand{\Sbb}{\mathbb{S}}
\newcommand{\dd}{\;{\rm d}}
\newcommand{\de}{{\rm d}}
\newcommand{\norm}[1]{\left\| #1 \right\|}

\newcommand{\SL} {{\mathrm{SL}(2,\R)}}
\newcommand{\SU} {{\mathrm{SU}(2)}}
\newcommand{\SO} {{\mathrm{SO}(2,\R)}}
\newcommand{\SLZ}{{\mathrm{SL}(2,\Z)}}
\newcommand{\GL} {{\mathrm{GL}^+(2,\R)}}
\newcommand{\boD}{\mathcal{D}}
\newcommand{\boL}{\mathcal{L}}
\newcommand{\boT}{\mathcal{T}}
\newcommand{\boQ}{\mathcal{Q}}
\newcommand{\boP}{\mathcal{P}}
\newcommand{\boC}{\mathcal{C}}
\newcommand{\boB}{\mathcal{B}}
\newcommand{\boF}{\mathcal{F}}
\newcommand{\st}{\;:\;}
\newcommand{\ic}{\mathbf{i}}
\newcommand{\smat}[4]{\left( \begin{smallmatrix} #1 & #2 \\ #3 & #4 \end{smallmatrix}\right)}
\newcommand{\Teich}{\mathrm{Teich}}
\newcommand{\Card}{\#}
\DeclareMathOperator{\length}{length}
\DeclareMathOperator{\supp}{supp}
\DeclareMathOperator{\vol}{vol}
\DeclareMathOperator{\Ima}{Im}
\DeclareMathOperator{\sys}{sys}
\newcommand{\boDb}{\overline{\boD^\Gamma}}
\newcommand{\boM}{\mathcal{M}}

\newcommand{\paths}{\kappa}
\newcommand{\partition}{\rho}
\newcommand{\normdeux}[1]{\left\| #1 \right\|'}
\newcommand{\extension}[1]{\overline{#1}}
\newcommand{\ress}{r_{\mathrm{ess}}}
\newcommand{\sigmaess}{\sigma_{\mathrm{ess}}}

\def\MM{\mathcal {M}}
\def\CC{\mathcal {C}}

\DeclareMathOperator{\Leb}{Leb}
\DeclareMathOperator{\dLeb}{dLeb}
\DeclareMathOperator{\id}{id}
\newcommand{\coloneqq}{\mathrel{\mathop:}=}

\newcommand{\boLo}{\overline{\boL}}
\newcommand{\Bo}{\overline{B}}

\newtheorem*{mainthm}{Main Theorem}
\newtheorem{thm}{Theorem}[section]
\newtheorem{prop}[thm]{Proposition}
\newtheorem{definition}[thm]{Definition}
\newtheorem{lem}[thm]{Lemma}
\newtheorem{cor}[thm]{Corollary}
\newtheorem*{prop*}{Proposition}

\theoremstyle{definition}
\newtheorem{example}[thm]{Example}
\newtheorem{rmk}[thm]{Remark}

\numberwithin{equation}{section}

\title[Small eigenvalues of the Laplacian in moduli space]
{Small eigenvalues of the Laplacian for algebraic measures in moduli space, and mixing properties of the Teichm\"uller flow}

\author{Artur Avila and S\'ebastien Gou\"ezel}

\address{CNRS UMR 7586, Institut de Math\'ematiques de Jussieu,
175 rue du Chevaleret, 75013, Paris, France \&
IMPA, Estrada Dona Castorina 110, 22460-320, Rio de Janeiro, Brazil}
\email{artur@math.jussieu.fr}
\address{IRMAR, CNRS UMR 6625,
Universit\'e de Rennes 1, 35042 Rennes, France}
\email{sebastien.gouezel@univ-rennes1.fr}

\date{November 24, 2010}

\begin{document}

\begin{abstract}
We consider the $\SL$ action on moduli spaces of quadratic
differentials.  If $\mu$ is an $\SL$-invariant probability
measure, crucial information about the associated
representation on $L^2(\mu)$ (and in particular, fine
asymptotics for decay of correlations of the diagonal action,
the Teichm\"uller flow) is encoded in the part of the spectrum of
the corresponding foliated hyperbolic Laplacian
that lies in $(0,1/4)$ (which controls the contribution of the
complementary series).  Here we prove that the essential spectrum of
an invariant algebraic measure is contained in $[1/4,\infty)$, i.e.,
for every $\delta>0$, there are only finitely many eigenvalues
(counted with multiplicity) in $(0,1/4-\delta)$. In particular, all
algebraic invariant measures have a spectral gap.
\end{abstract}

\maketitle

\section{Introduction}

For any lattice $\Gamma \subset \SL$, the irreducible decomposition of
the unitary representation of $\SL$ on $L^2(\SL/\Gamma)$ consists almost
entirely of tempered representations (with fast decay of matrix
coefficients): only finitely many non-tempered representations may
appear, each with finite multiplicity.  This corresponds to the well
known result of Selberg (see, e.g.,~\cite{Iwaniec})
that in an hyperbolic surface of finite volume,
the Laplacian has only finitely many eigenvalues, with finite
multiplicity, in $(0,1/4)$.  This has several remarkable consequences,
for instance, on the asymptotics of the number of closed geodesics,
the main error terms of which come from the small eigenvalues of the
Laplacian (by Selberg's trace formula, see~\cite{hejhal_2}), or for the
asymptotics of the correlations of smooth functions under the
diagonal flow~\cite{ratner}.

For a more general ergodic action of $\SL$, the situation can be much
more complicated: in general, one may even not have a spectral gap
($\SL$ does not have Kazhdan's property $(T)$).  Even in the
particularly nice situation of the $\SL$ action on a homogeneous
space $G/\Gamma$ with $G$ a semi-simple Lie group containing $\SL$
and $\Gamma$ an irreducible lattice in $G$ (a most natural
generalization the case $G=\SL$ above), non-tempered representations
may have a much heavier contribution: for instance, \cite[Theorem
1]{sarnak_strong_spectral_gap} constructs examples (with $G=
\SL\times \SU$) where the spectrum of the foliated (along $\SO \backslash \SL$
orbits) Laplacian on $\SO \backslash G/\Gamma$ has an accumulation point in
$(0,1/4)$.  In fact, whether there is always a spectral gap at all
remains an open problem for $G=\SL \times \SU$.  While one does expect better
behavior in the case where $G$ has no compact factor, it too remains far
from fully understood.

Moduli spaces of quadratic differentials present yet another natural
generalization of $\SL/\Gamma$, with different challenges.  Let $g,n \geq 0$
with $3g-3+n>0$, let $\MM_{g,n}$
be the moduli space of quadratic differentials on a genus one Riemann
surface with $n$ punctures, and with at most simple poles at the punctures
(alternatively, it is the cotangent bundle of the moduli space of Riemann
surfaces),
and let $\MM^1_{g,n} \subset \MM_{g,n}$ be the subspace of area one
quadratic differentials.  There is a natural $\SL$ action on $\MM^1_{g,n}$,
which has been intensively studied, not least because the corresponding
diagonal action gives the Teichm\"uller geodesic flow.  If $g=0$ and $n=4$
or if $g=1$ and $n=1$, $\MM^1_{g,n}$ turns out to be of the form
$\SL/\Gamma$.  In higher genus the $\MM^1_{g,n}$ are not homogeneous spaces,
and it is rather important to understand to which extent they may still behave
as such.

Recall that $\MM_{g,n}$ is naturally stratified by the ``combinatorial
data'' of the quadratic differential $q$ (order of zeros, number of poles, and
whether or not $q$ is a square of an Abelian differential).  Each stratum
has a natural complex affine structure, though it is not necessarily
connected, the (finitely many) connected
components having been classified by
Kontsevich-Zorich~\cite{Kontsevich_Zorich} and
Lanneau~\cite{Lanneau_connected}.
Each connected component $\CC$ carries a unique (up to scaling) finite
invariant measure $\mu$ which is $\SL$ invariant and
absolutely continuous with respect to $\CC
\cap \MM^1_{g,n}$ (in case of
the largest, ``generic'', stratum, which is connected, $\mu$ coincides with
the Liouville measure in $\MM^1_{g,n}$).  Those measures were constructed,
and shown to be ergodic, by Masur~\cite{Masur} and Veech~\cite{Veech_flow}.
In~\cite{AGY_teich} and~\cite{avila_resende}, it is shown that for
such a {\it Masur-Veech
measure} $\mu$ the $\SL$ action on $L^2(\mu)$ has a spectral gap.

There are many more ergodic $\SL$ invariant measures beyond the Masur-Veech
measures, which can be expected to play an important role in the analysis of
non-typical $\SL$ orbits (the consideration of non-typical orbits arises, in
particular, when studying billiards in rational polygons).  While all such
measures have not yet been
classified, it has been recently announced by Eskin and Mirzakhani that they are
all ``algebraic'',\footnote {Here we use the term algebraic in a rather lax
sense.  What has actually been shown is that the corresponding $\GL$ invariant
measure is supported on an affine submanifold of some stratum, along which
it is absolutely continuous (with locally constant density in affine charts).}
a result analogue to one of Ratner's Theorems (classifying $\SL$ invariant
measures in an homogeneous space~\cite{Ratner_sl}).
(For squares of Abelian differentials in
$\MM_{2,0}$, a stronger version of this result, including the classification
of the algebraic invariant measures,
was obtained earlier by McMullen \cite {mcmullen}.)

\medskip

Let $\mu$ be an algebraic $\SL$-invariant measure in some
$\MM^1_{g,n}$. Our goal in this paper is to see to what extent the
action of $\SL$ on $L^2(\mu)$ looks like an action on an homogeneous
space, especially concerning small eigenvalues of the associated
Laplacian acting on the subspace of $\SO$ invariant functions in
$L^2(\mu)$.  Our main theorem states that the situation is almost
identical to the $\SL/\Gamma$ case (the difference being that we are
not able to exclude the possibility that the eigenvalues accumulate
at $1/4$):

\begin{mainthm}
Let $\mu$ be an $\SL$-invariant algebraic probability measure
in the moduli space of quadratic differentials. For any
$\delta>0$, the spectrum of the associated Laplacian in
$[0,1/4-\delta]$ is made of finitely many eigenvalues, of
finite multiplicity.
\end{mainthm}

This theorem can also be formulated as follows: in the
decomposition of $L^2(\mu)$ into irreducible components, the
representations of the complementary series occur only
discretely, with finite multiplicity. More details are given in
the next section.

Our result is independent of the above mentioned theorem of
Eskin and Mirzakhani.
With their theorem, we obtain that our result in fact applies to all
$\SL$-invariant probability measures.

As mentioned before, the spectral gap (equivalent to the
absence of spectrum in
$(0,\epsilon)$ for some $\epsilon>0$) had been previously established in the
particular case of Masur-Veech measures (\cite {AGY_teich}, \cite
{avila_resende}),
but without any control of the
spectrum beyond a neighborhood of $0$ (which moreover degenerates
as the genus increases).  Here we not only obtain very detailed information
of the spectrum up to the $1/4$ barrier (beyond which the statement is
already false even for the modular surface $\MM_{1,1}$), but manage to
address all algebraic measures, even in the absence of a classification.
This comes from the implementation of a rather different, geometric
approach, in contrast with the combinatorial one used to establish
the spectral gap for Masur-Veech measures (heavily dependent on
the precise combinatorial description, in terms of Rauzy diagrams,
of the Teichm\"uller flow restricted to connected components of stratum).

\medskip

An interesting question is whether there are indeed eigenvalues in
$(0,1/4)$. It is well known that there is no such eigenvalue in
$\SL/\Gamma$ for $\Gamma = \SLZ$, and by
Selberg's Conjecture~\cite{Selberg} the
situation should be the same for any congruence subgroup. It is
tempting to conjecture that, in our non-homogeneous situation, there
is no eigenvalue either, at least when $\mu$ is the Masur--Veech
measure. We will however refrain from doing so since we have no
serious evidence in one direction or the other. Let us note however
that, for some measures $\mu$, there are indeed eigenvalues: for any
finite index subgroup $\Gamma$ of the congruence subgroup $\Gamma(2)$
containing $\{\pm 1\}$, the curve $\SL/\Gamma$ can be realized as a
Teichm\"uller curve by \cite{every_curve_is_teich}. Suitably choosing
$\Gamma$
and taking for $\mu$ the Liouville measure on the resulting Teichm\"uller
curve, we get an example with eigenvalues.  Notice that this shows
indeed that there can be no uniform spectral gap for all algebraic measures
in all moduli spaces (it is unknown whether there is a
uniform spectral gap in each fixed moduli space).

\medskip

A consequence of our main theorem is that the correlations of well
behaved functions have a nice asymptotic expansion (given by the
spectrum of the Laplacian). For instance, if $f_1$ and $f_2$ are
square-integrable $\SO$-finite functions (i.e., $f_1$ and $f_2$ have
only finitely many nonzero Fourier coefficients for the action of
$\SO$), then their correlations $\int f_1\cdot f_2\circ g_t \dd \mu$ with
respect to the Teichm\"uller flow $g_t=\smat {e^t} 0 0 {e^{-t}}$ can be
written, for every $0<\delta<1$,
as $\sum_{i=0}^{M-1} c_i(f_1,f_2) e^{-a_i t} +
o(e^{-(1-\delta)t})$, where $0=a_0<\dots < a_{M-1} \leq 1-\delta$ are
the numbers $1-\sqrt{1-4\lambda}$ for $\lambda$ an eigenvalue of
$\Delta$ in $[0, (1-\delta^2)/4]$. This follows at once from the
asymptotic expansion of matrix coefficients of $\SO$-finite functions
in \cite[Theorem 5.6]{casselman_milicic}. A similar expansion
certainly holds if $f_1$ and $f_2$ are only compactly supported
$C^\infty$ functions, but its proof would require more detailed
estimates on matrix coefficients.

\medskip

We expect that our techniques will also be useful in the study of
the Ruelle zeta function $\zeta_{\mathrm{Ruelle}}(z)=\prod_\tau
(1-e^{-z |\tau|})$ (where $\tau$ runs over the prime closed orbits of
the flow $g$, and $|\tau|$ is the length of $\tau$).  Recall that
$\zeta_{\mathrm{Ruelle}}(z)$ can be expressed
as an alternating product $\prod \zeta_k(z)^{(-1)^k}$, where
$\zeta_k$ is a dynamical zeta function related to the action of $g_t$
on the space of $k$-forms (see for instance \cite{fried_analytique}).  Along
the proof of the main theorem, we obtain considerable information for the
action of the Teichm\"uller flow in suitably defined Banach spaces,
which goes in the direction of providing
meromorphic extensions of the functions $\zeta_k$ (and therefore also
of the Ruelle zeta functions), hence opening the way to precise asymptotic
formulas (which should include correction terms coming from small eigenvalues
of the Laplacian) for the number of
closed geodesics in the support of any algebraic invariant measure.

\section{Statements of results}

Our results will be formulated in moduli spaces of flat surfaces, as
follows. Fix a closed surface $S$ of genus $g\geq 1$, a subset
$\Sigma=\{\sigma_1,\dots,\sigma_j\}$ of $S$ and multiplicities
$\kappa=(\kappa_1,\dots, \kappa_j)$ with $\sum (\kappa_i -1) = 2g-2$.
We denote by $\Teich = \Teich(S, \Sigma, \kappa)$ the set of
translation structures on $S$ such that the cone angle around each
$\sigma_i$ is equal to $2\pi\kappa_i$, modulo isotopy. Equivalently,
this is the space of abelian differentials with zeroes of order
$\kappa_i-1$ at $\sigma_i$. Let also $\Teich_1\subset \Teich$ be the
set of area one surfaces.

Given a translation surface $x$, one can develop closed paths
(or more generally paths from singularity to singularity) from
the surface to $\C$, using the translation charts. This defines
an element $\Phi(x)\in H^1(M,\Sigma;\C)$. The resulting
\emph{period map} $\Phi: \Teich \to H^1(M,\Sigma; \C)$ is a
local diffeomorphism, and endows $\Teich$ with a canonical
complex affine structure.

The mapping class group $\Gamma$ of $(S,\Sigma,\kappa)$ is the group
of homeomorphisms of $S$ permuting the elements of $\Sigma$ with the
same $\kappa_i$. It acts on $\Teich$ and on $\Teich_1$. The space
$\Teich$ is also endowed with an action of $\GL$, obtained by
postcomposing the translation charts by $\GL$ elements. The action of
the subgroup $\SL$ of $\GL$ leaves $\Teich_1$ invariant. Since the
actions of $\GL$ and $\Gamma$ commute, we may write the former on the
left and the latter on the right.

\begin{definition}
\label{def_proj_measure}
A measure $\tilde\mu$ on $\Teich_1$ is admissible if it
satisfies the following conditions:
\begin{itemize}
\item The measure $\tilde\mu$ is $\SL$ and $\Gamma$-invariant.
\item There exists a $\Gamma$-invariant linear submanifold $Y$ of $\Teich$ such
that $\tilde\mu$ is supported on $X=Y\cap \Teich_1$, and
the measure $\tilde\mu\otimes \Leb$ on $X\times \R_+^* =Y$
is locally a multiple of the linear Lebesgue measure on
$Y$.
\item The measure $\mu$ induced by $\tilde\mu$ on $X/\Gamma$ has
finite mass, and is ergodic under the action of $\SL$ on
$X/\Gamma$.
\end{itemize}
\end{definition}

Although this definition may seem quite restrictive, it follows from
the above mentioned theorem of Eskin and Mirzakhani that
ergodic $\SL$-invariant
measures are automatically admissible. The following proposition is
much weaker, but we nevertheless include it since its proof is
elementary, and is needed to obtain further information on admissible
measures (in particular on their local product structure, see
Proposition \ref{prop_local_product} below).

\begin{prop}
\label{prop_smooth_measure_is_projective}
Let $X$ be a $\Gamma$-equivariant $C^1$ submanifold of
$\Teich_1$ such that $X/\Gamma$ is connected, and let
$\tilde\mu$ be a $\SL$ and $\Gamma$-invariant measure on $X$
such that $\tilde\mu$ is equivalent to Lebesgue measure, and
the induced measure $\mu$ in $X/\Gamma$ is a Radon measure,
i.e., it gives finite mass to compact subsets of $X/\Gamma$.
Then $\tilde\mu$ is admissible.
\end{prop}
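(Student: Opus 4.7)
My plan is to construct the linear submanifold $Y$ by scaling-saturation, upgrade its $C^1$ structure to local affine-linearity using $\GL$-invariance and the Lebesgue-class assumption, and then derive finite mass and ergodicity from Teichm\"uller-dynamical considerations. As a first step I define $Y := \R_+^* \cdot X \subset \Teich$, the scaling cone over $X$. Since $X$ is $\Gamma$- and $\SL$-invariant and scaling commutes with these actions, $Y$ is a $\Gamma$- and $\GL$-invariant $C^1$ submanifold of $\Teich$, with $X = Y \cap \Teich_1$. The diffeomorphism $(x,r)\mapsto r\cdot x$ from $X\times \R_+^*$ to $Y$ transports $\tilde\mu\otimes \Leb$ to a $\GL$-invariant Radon measure $\tilde\nu$ on $Y$, equivalent to Lebesgue on the $C^1$ manifold $Y$.

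\textbf{Local linearity and constant density.} This is the heart of the argument. Work in a period chart, so that $\Teich$ embeds in $H^1(M,\Sigma;\C)\cong V\otimes_\R \R^2$ with $V:=H^1(M,\Sigma;\R)$, and $\GL$ acts linearly as the identity on $V$ tensored with the standard representation on $\R^2$. For $y=(v_1,v_2)\in Y$, let $W(y):=\operatorname{span}_\R(v_1,v_2)\subset V$; then the $\GL$-orbit of $y$ lies in $W(y)\otimes_\R \R^2$. The key structural claim is that $y\mapsto W(y)$ is locally constant on $Y$: combined with $C^1$-regularity and the fact that $Y$ contains all $\GL$-orbits through its points, this forces $Y$ locally to coincide with an open subset of $V'\otimes_\R \R^2$ for a locally constant $V'\subset V$, yielding local affine-linearity. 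Once linearity is in hand, the $\SL$-invariance of $\tilde\nu$ together with Lebesgue-equivalence forces the density of $\tilde\nu$ relative to Lebesgue on $V'\otimes_\R \R^2$ to be locally constant on the area-one slice $X=Y\cap \Teich_1$, which is precisely the structure required in Definition~\ref{def_proj_measure}.

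\textbf{Finite mass and ergodicity.} For finite total mass of $\mu$ on $X/\Gamma$, I would invoke Masur--Smillie non-divergence for the $\SL$-action: any $\SL$-invariant Radon measure on $\Teich_1/\Gamma$ concentrates on the thick part $\{\sys\geq\varepsilon\}$, which is relatively compact in $X/\Gamma$ and hence has finite $\mu$-measure by the Radon hypothesis; letting $\varepsilon\to 0$ gives $\mu(X/\Gamma)<\infty$. For ergodicity, I consider the $\SL$-ergodic decomposition of $\tilde\mu$: by the local linearity argument above, each ergodic component is itself a constant-density Lebesgue measure on a linear subvariety, and the connectedness of $X/\Gamma$ together with the full support of $\tilde\mu$ on $X$ forces all components to coincide up to a scalar, so $\tilde\mu$ is ergodic.

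\textbf{Main difficulty.} The central obstacle is the local linearity claim — a rigidity statement asserting that a $\GL$-invariant $C^1$ submanifold supporting a Lebesgue-class invariant measure is actually affine-linear in period coordinates. A priori, $Y$ could be a nontrivial $C^1$ union of $\GL$-orbits $W\otimes_\R \R^2$ parameterized by a varying Grassmannian-valued function $W(\cdot)$; it is the interaction between absolute continuity of $\tilde\nu$ and the \emph{linear} (not merely smooth) nature of the $\GL$-action in period coordinates that must collapse this family to a single plane. Handling this carefully, without a priori control on how $W(y)$ varies with $y$, is the technical heart of the argument.
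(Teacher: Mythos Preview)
Your overall skeleton --- finite mass via nondivergence, then linearity of $Y$, then constant density, then ergodicity --- matches the paper's, but your attack on the linearity step, which you correctly flag as the heart of the matter, does not work as written and diverges from the paper's argument in an essential way.

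The claim that $y\mapsto W(y)=\operatorname{span}_\R(v_1,v_2)$ is locally constant on $Y$ is false whenever $\dim_\R Y>4$. The $\GL$-orbit through $y=a+\ic b$ is indeed open in the $4$-plane $W(y)\otimes_\R\R^2$, but any direction in $T_yY$ transverse to that orbit (say $c\in T_yY\cap H^1(M,\Sigma;\R)$ with $c\notin W(y)$) moves $W(y)$. So $W(y)$ varies on every $Y$ of interest, including $\Teich$ itself, and nothing in your outline collapses the family $\{W(y)\}$. Your slogan about ``interaction between absolute continuity and linearity of the action'' is right, but you have no mechanism.

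The paper's mechanism is dynamical. After establishing finite mass (via Athreya --- essentially your nondivergence step), one has Poincar\'e recurrence for $g_t$ on $(X/\Gamma,\mu)$. Recurrence combined with the non-uniform hyperbolicity of $g_t$ (Forni's contraction on compacta, Proposition~\ref{pliss_hyperbolicity}) forces the splitting $T_xX=\R\omega(x)\oplus E^u(x)\oplus E^s(x)$ at a dense set of recurrent points, hence everywhere by semicontinuity. This is the step that genuinely uses the measure, and it has no counterpart in your sketch. From the splitting, $\SO$-invariance gives $\tilde E^s(x)=\ic\,\tilde E^u(x)$, so $T_xY$ is a complex subspace at every $x$; thus $Y$ is a holomorphic submanifold. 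Linearity then follows from a short argument: writing $Y$ locally as a graph $\{(z,f(z))\}$ over $\C^k$, the fact that the real part of $T_xY$ has real dimension exactly $k$ forces $Df(x)$ to have real entries, hence $Df$ is a constant.

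Two smaller points. Your constant-density claim is too quick: $\SL$-invariance of a Lebesgue-class measure on a linear $Y$ does not by itself make the density locally constant (the $\SL$-orbits are only $3$-dimensional). The paper again uses dynamics --- pushing smooth approximations $\mu_n$ by $g_t$ and a Hopf argument along the stable and unstable affine foliations. And your ergodicity argument is circular: an ergodic component of $\mu$ need not be equivalent to Lebesgue, so you cannot re-run the linearity argument on it. The paper instead restricts $\tilde\mu$ to an invariant set (still absolutely continuous), applies the constant-density argument on the already-linear $Y$, and uses connectedness of $Y/\Gamma$ to conclude the set has zero or full measure.
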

This proposition should be compared to a result of Kontsevich
and M\"oller in \cite{moller_linear}: any $\GL$-invariant
algebraic submanifold of $\Teich$ is linear. Here, we obtain
the same conclusion if $X$ is only $C^1$, but we additionally
assume the existence of an invariant absolutely continuous
Radon measure on $X$.

\medskip

Let $\tilde\mu$ be an admissible measure, supported by a
submanifold $X$ of $\Teich_1$. Every $\SL$-orbit in $X/\Gamma$
is isomorphic to a quotient of $\SL$. Therefore, the image of
every such orbit in $\SO \backslash X /\Gamma$ (the set of
translations surfaces in $X$, modulo the mapping class group,
and in which the vertical direction is forgotten) is a quotient
of the hyperbolic plane, and is canonically endowed with the
hyperbolic Laplacian. Gluing those operators together on the
different orbits, we get a Laplacian $\Delta$ on $\SO
\backslash X /\Gamma$, which acts (unboundedly) on $L^2( \SO
\backslash X /\Gamma, \mu)$, where $\mu$ is $\tilde\mu$ mod
$\Gamma$. Our main theorem describes the spectrum of this
operator:

\begin{thm}
\label{global_main_thm}
Let $\tilde\mu$ be an admissible measure, supported by a
manifold $X$. Denote by $\mu$ the induced measure on
$X/\Gamma$. Then, for any $\delta>0$, the spectrum of the
Laplacian $\Delta$ on $L^2( \SO \backslash X /\Gamma, \mu)$,
intersected with $(0,1/4-\delta)$, is made of finitely many
eigenvalues of finite multiplicity.
\end{thm}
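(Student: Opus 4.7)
The plan is to reduce the spectral statement for the Laplacian on $\SO\backslash X/\Gamma$ to a quasi-compactness statement for the Teichm\"uller flow $g_t$ acting on a suitable anisotropic Banach space of functions (or distributions) on $X/\Gamma$. The bridge is classical $\SL$-representation theory: decomposing $L^2(X/\Gamma,\mu)$ into irreducibles, an eigenvalue $\lambda=s(1-s)$ of $\Delta$ with $s\in(1/2,1)$ lying in $(0,1/4-\delta)$ corresponds to a complementary series subrepresentation with parameter $s>1/2+\sqrt{\delta}$, whose $\SO$-invariant matrix coefficients decay only as $e^{-(1-s)t}$ along $g_t$, that is, more slowly than $e^{-(1/2-\sqrt{\delta})t}$. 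It therefore suffices to exhibit a Banach space on which $U_{g_t}f = f\circ g_{-t}$ has essential spectral radius at most $e^{-(1/2-\eta)t}$ for every small $\eta>0$: then Hennion's theorem supplies only finitely many eigenvalues with modulus exceeding $e^{-(1/2-\sqrt{\delta})t}$, each of finite multiplicity, and these account for all the Laplacian eigenvalues below $1/4-\delta$.

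To build such a space I would proceed in the spirit of the anisotropic norms used by Blank--Keller--Liverani, Gou\"ezel--Liverani and Baladi--Tsujii for hyperbolic dynamics: ask for smoothness along the unstable (horizontal) foliation of $g_t$ and only distributional regularity of low order along the stable (vertical) foliation, using the local product structure of $\tilde\mu$ provided by Proposition \ref{prop_local_product}. Let $\boB_s\hookrightarrow\boB_w$ denote the resulting strong and weak spaces, with compact inclusion. The target estimate is a Lasota--Yorke (Doeblin--Fortet) inequality of the form
\[
\|U_{g_t}f\|_{\boB_s} \leq C_\eta e^{-(1/2-\eta)t}\|f\|_{\boB_s} + C_\eta\|f\|_{\boB_w},
\]
valid for every $\eta>0$, whose dynamical content is that the unstable expansion of $g_t$ is exactly of rate $e^{t/2}$ on the transverse directions picked up by the anisotropic norm (the $1/2$ coming from the half-Jacobian in the change of variables formula for the transfer operator of a flow). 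The final translation back to the Laplacian then uses $\SL$-covariance to upgrade Banach-space eigendistributions to genuine $L^2$ vectors and matches them with complementary series via the Casselman--Mili\v{c}i\'c expansion \cite{casselman_milicic}.

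The principal obstacle is the non-compactness of $X/\Gamma$: its ``cusps'' correspond to flat surfaces whose systole tends to zero, and unlike the standard cusps of $\SL/\Gamma$ their geometry is stratum-dependent, controlled by delicate combinatorics of short saddle connections. To close the Lasota--Yorke estimate one needs quantitative recurrence of the Teichm\"uller flow to compact sets, in the form of a systole-sensitive Margulis function on $X/\Gamma$ whose drift under $g_t$ beats the critical rate $e^{-t/2}$, so that the mass of $U_{g_t}f$ supported deep in the cusp is dominated by the weak norm of $f$. This is exactly where the combinatorial route of \cite{AGY_teich,avila_resende} via Rauzy diagrams stops short of the $1/4$ barrier; the expected novelty is a geometric, systole-based choice of $\boB_s$ together with uniform excursion estimates for $g_t$ near the boundary of moduli space, which can be set up intrinsically for every admissible measure and thus cover all algebraic measures simultaneously.
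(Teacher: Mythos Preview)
Your overall architecture---anisotropic Banach spaces built from the local product structure of the measure, a Lasota--Yorke inequality with a systole-based Margulis function to control recurrence to compact sets, and $\SL$ representation theory as the bridge to the Laplacian---matches the paper's strategy. But three concrete points separate your sketch from a working proof.

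\textbf{The flow direction.} You propose a Lasota--Yorke inequality for $U_{g_t}$ itself, with norms built from unstable smoothness and stable distributional regularity. This ignores the neutral flow direction: there is no contraction there, so no such inequality can hold on a space that sees the flow direction at all, and if your space does not see it you cannot recover correlations. The paper does \emph{not} study $\boL_t$ directly; it studies the resolvent $\boM=R(4\delta)=\int_0^\infty e^{-4\delta t}\boL_t\,dt$ and shows its essential spectral radius is at most $1+\delta$. The flow-direction part of the norm is then handled by an integration-by-parts in $t$ (Lemma~\ref{lem_omega}): a derivative $L_\omega$ applied to $\int h(t)\boL_t f\,dt$ produces boundary terms and $\int h'(t)\boL_t f\,dt$, reducing the number of derivatives by one. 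This trick requires the time-average and would not work for a single $\boL_t$.

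\textbf{The rate.} Your ``$1/2$ from the half-Jacobian'' is not a thing; there is no natural $e^{-t/2}$ coming from change of variables. The actual mechanism in the paper is: Eskin--Masur recurrence gives contraction $e^{-(1-2\delta)t}$ when the orbit is near infinity, while in a large compact set $K$ the hyperbolicity gives only some $\lambda<1$ close to $1$. The crucial point (stressed in the paper's sketch) is that using $C^k$ norms turns this into $\lambda^k$, and choosing $k$ large at the very end makes the compact-set contribution arbitrarily small. This, not any Jacobian, is what reaches the $1/4$ threshold.

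\textbf{The bridge back to $L^2$.} ``Upgrading Banach-space eigendistributions to $L^2$ vectors'' does not work: eigendistributions of transfer operators on anisotropic spaces are typically genuine distributions, not $L^2$ functions. The paper's route is indirect and robust: for $f_1,f_2\in\boD^\Gamma$ it computes the Laplace transform $F_{f_1,f_2}(z)=\int_0^\infty e^{-zt}\langle f_1,\boL_t f_2\rangle\,dt$ in two ways. Proposition~\ref{prop_mero_ext} uses the resolvent spectrum on $\boDb$ to extend $F$ meromorphically into $\Re z>-1+6\delta$ with finitely many poles (at $4\delta-1/\lambda_i$). Proposition~\ref{prop_extension} uses the fine asymptotics of spherical functions (Harish-Chandra's $c$-function) to show that the only possible singularities of $F$ on $(-1,0]$ come from atoms of the spectral measure in the complementary series. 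Matching the two forces those atoms to sit at the finitely many points $4\delta-1/\lambda_i+1$, with multiplicity bounded by the rank of the spectral projections $\Pi_{\lambda_i}$.
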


This theorem can also be formulated in terms of the spectrum of
the Casimir operator, or in terms of the decomposition of
$L^2(X/\Gamma,\mu)$ into irreducible representations under the
action of $\SL$: for any $\delta>0$, there is only a finite
number of representations in the complementary series with
parameter $u\in (\delta,1)$ appearing in this decomposition,
and they have finite multiplicity. See \S
\ref{subsec_background_SL} for more details on these notions
and their relationships.

\begin{rmk}
We have formulated the result in the space $X/\Gamma$ where $\Gamma$
is the mapping class group. However, if $\Gamma'$ is a subgroup of
$\Gamma$ of finite index, then the proof still applies in $X/\Gamma'$
(of course, there may be more eigenvalues in $X/\Gamma'$ than in
$X/\Gamma$). This applies for instance to $\Gamma'$ the set of
elements of $\Gamma$ that fix each singularity $\sigma_i$.
\end{rmk}

\begin{rmk}
In compact hyperbolic surfaces, the spectrum of the Laplacian is
discrete. Therefore, the essential spectrum of the Laplacian in
$[1/4,\infty)$ in finite volume hyperbolic surfaces comes from
infinity, i.e., the cusps. Since the geometry at infinity of moduli
spaces of flat surfaces is much more complicated than cusps, one
might expect more essential spectrum to show up, and Theorem
\ref{global_main_thm} may come as a surprise. However, from the point
of view of measure, infinity has the same weight in hyperbolic
surfaces and in moduli spaces: the set of points at distance at least
$H$ in a cusp has measure $\sim c H^{-2}$, while its analogue in a
moduli space is the set of surfaces with systole at most $H^{-1}$,
which also has measure of order $c' H^{-2}$ by the Siegel-Veech
formula \cite{eskin_masur}. This analogy (which also holds for
recurrence speed to compact sets) justifies heuristically
Theorem~\ref{global_main_thm}.
\end{rmk}

\smallskip

\emph{Quadratic differentials.}
Let $g,n \geq 0$ be integers such that $3g-3+n>0$ and let
$\boT_{g,n}$ be the Teichm\"uller space of Riemann surfaces of genus
$g$ with $n$ punctures. Its cotangent space is the space $\boQ_{g,n}$
of quadratic differentials with at most simple poles at the
punctures.  It is stratified by fixing some appropriate combinatorial
data (the number of poles, the number of zeros of each given order,
and whether the quadratic differential is a square of an Abelian
differential or not).  Much of the theory of quadratic differentials
is parallel to the one of Abelian differentials, in particular, each
stratum in $\boQ_{g,n}$ can be seen as a Teichm\"uller space
$\widetilde \Teich= \widetilde \Teich(\tilde S,\tilde \Sigma,\tilde
\kappa)$ of {\it half-translation structures}, which allows one to
define a natural action of $\GL$.  Moreover, strata are endowed with
a natural affine structure, which allows one to define the notion of
admissible measure (in particular, the Liouville measure in
$\boQ_{g,n}$ is admissible). Thus the statement of Theorem
\ref{global_main_thm} still makes sense in the setting of quadratic
differentials.  As it turns out, it can also be easily derived from
the result about Abelian differentials.

This is most immediately seen for strata of squares, in which case
$\widetilde \Teich$ is the quotient of a Teichm\"uller space of Abelian
differentials $\Teich$ by an involution (the rotation of angle
$\pi$). Taking the quotient by $\SO$, we see that the spectrum of the
Laplacian for some $\SL$-invariant measure in $\widetilde \Teich$ is
the same as the one for its (involution-symmetric) lift to $\Teich$,
to which Theorem \ref{global_main_thm} applies.

Even when $\widetilde \Teich$ is not a stratum of squares, it can
still be analyzed in terms of certain Abelian differentials (the
well-known double cover construction also used in
\cite{avila_resende}).  Indeed in this case the Riemann surface with
a quadratic differential admits a (holomorphic, ramified, canonical)
connected double cover (constructed formally using the doubly-valued
square-root of the quadratic differential), to which the quadratic
differential lifts to the square of an (also canonical) Abelian
differential.  This double cover carries an extra bit of information,
in the form of a canonical involution, so that $\widetilde \Teich$
gets identified with a Teichm\"uller space of ``translation surfaces
with involution''.  Forgetting the involution, the latter can be seen
as an affine subspace of a Teichm\"uller space of translation surfaces,
allowing us to apply Theorem \ref{global_main_thm}.

\smallskip

\emph{Notations.}
Let us introduce notations for convenient elements of $\SL$. For
$t\in \R$, let $g_t=\smat{e^t}{0}{0}{e^{-t}}$. Its action on $\boQ_g$
is the geodesic flow corresponding to the Teichm\"uller distance on
$\boT_g$, and its action in different strata (that we still call the
Teichm\"uller flow) will play an essential role in the proof of our
main theorem. We also denote $h_r=\smat 1 r 0 1$ and $\tilde
h_r=\smat 1 0 r 1$ the horocycle actions, and $k_\theta = \smat{\cos
\theta}{\sin \theta}{-\sin\theta}{\cos\theta}$ the circle action.
Throughout this article, the letter $C$ denotes a constant whose
value is irrelevant and can change from line to line.

\smallskip

\emph{Sketch of the proof.}
The usual strategy to prove that the spectrum of the Laplacian is
finite in $[0,1/4]$ in a finite volume surface $S=\SO\backslash
\SL/\Gamma$ is the following: one decomposes $L^2(S)$ as
$L^2_{\text{cusp}}(S)\oplus L^2_{\text{eis}}(S)$ where
$L^2_{\text{cusp}}(S)$ is made of the functions whose average on all
closed horocycles vanishes, and $L^2_{\text{eis}}(S)$ is its
orthogonal complement. One then proves that the spectrum in
$L^2_{\text{eis}}(S)$ is $[1/4,\infty)$ by constructing a basis of
eigenfunctions using Eisenstein series, and that the spectrum in
$L^2_{\text{cusp}}(S)$ is discrete since convolution with smooth
compactly supported functions in $\SL$ is a compact operator.

There are two difficulties when trying to implement this strategy in
nonhomogeneous situations. Firstly, since the geometry at infinity is
very complicated, it is not clear what the good analogue of
$L^2_{\text{eis}}(S)$ and Eisenstein series would be. Secondly, the
convolution with smooth functions in $\SL$ only has a smoothing
effect in the direction of the $\SL$ orbits, and not in the
transverse direction (and this would also be the case if one directly
tried to study the Laplacian); therefore, it is very unlikely to be
compact.

To solve the first difficulty, we avoid completely the decomposition
into Eisenstein and cuspidal components and work in the whole $L^2$
space. This means that we will not be able to exhibit compact
operators (since this would only yield discrete spectrum), but we
will rather construct quasi-compact operators, i.e., operators with
finitely many large eigenvalues and the rest of the spectrum
contained in a small disk. The first part will correspond to the
spectrum of the Laplacian in $[0,1/4-\delta]$ and the second part to
the non-controlled rest of the spectrum.

Concerning the second difficulty, we will not study the Laplacian nor
convolution operators, but another element of the enveloping algebra:
the differentiation $L_\omega$ in the direction $\omega$ of the flow
$g_t$. Of course, its behavior on the space $L^2(X/\Gamma,\mu)$ is
very bad, but we will construct a suitable Banach space $\boB$ of
distributions on which it is quasi-compact. To relate the spectral
properties of $g_t$ on $\boB$ and of $\Delta$ on $L^2$, we will rely
on fine asymptotics of spherical functions in irreducible
representations of $\SL$ (this part is completely general and does
not rely on anything specific to moduli spaces of flat surfaces).

The main difficulty of the article is the construction of $\boB$ and
the study of $L_\omega$ on $\boB$. We rely in a crucial way on the
hyperbolicity of $g_t$, that describes what happens in all the
directions of the space under the iteration of the flow. If $\boB$ is
carefully tuned (its elements should be smooth in the stable
direction of the flow, and dual of smooth in the unstable direction),
then one can hope to get smoothing effects in every direction, and
therefore some compactness. This kind of arguments has been developed
in recent years for Anosov maps or flows in compact manifolds and has
proved very fruitful (see among others \cite{liverani_contact,
gouezel_liverani, bt_aniso, butterley_liverani}). We use in an
essential way the insights of these papers. However, the main
difficulty for us is the non-compactness of moduli space: since we
can not rely on an abstract compactness argument close to infinity,
we have to get explicit estimates there (using a quantitative
recurrence estimate of Eskin-Masur \cite{eskin_masur}). We should
also make sure that the estimates do not diverge at infinity.
Technically, this is done using the Finsler metric of
Avila-Gou\"ezel-Yoccoz \cite{AGY_teich} (that has good regularity
properties uniformly in the Teichm\"uller space) to define the Banach
space $\boB$, and plugging the Eskin-Masur function $V_\delta$ into
the definition of $\boB$. On the other hand, special features of the
flow under study are very helpful: it is affine (hence no distortion
appears), and its stable and unstable manifolds depend smoothly on
the base point and are affine. Moreover, it is endowed in a $\SL$
action, which implies that its spectrum can not be arbitrary:
contrary to \cite{liverani_contact}, we will not need to investigate
spectral values with large imaginary part.

Let us quickly describe a central step of the proof. At some point,
we need to study the iterates $\boL_{T_0}^n$ of the operator
$\boL_{T_0}f = f\circ g_{T_0}$, for a suitably chosen $T_0$. Using a
partition of unity, we decompose $\boL_{T_0}$ as $\tilde\boL_1 +
\tilde\boL_2$ where $\tilde\boL_1$ corresponds to what is going on in
a very large compact set $K$, and $\tilde\boL_2$ takes what happens
outside $K$ into account. We expand $\boL_{T_0}^n =\sum_{\gamma_i\in
\{1,2\}} \tilde \boL_{\gamma_1}\cdots \tilde\boL_{\gamma_n}$. In this
sum, if most $\gamma_i$s are equal to $2$, we are spending a lot of
time outside $K$, and the Eskin-Masur function gives us a definite
gain. Otherwise, a definite amount of time is spent inside $K$, where
the flow is hyperbolic, and we get a gain $\lambda$ given by the
hyperbolicity constant of the flow inside $K$. Unfortunately, we only
know that $\lambda$ is strictly less than $1$ (and $K$ is very large,
so it is likely to be very close to $1$). This would be sufficient to
get a spectral gap, but not to reach $1/4$ in the spectrum of the
Laplacian. A key remark is that, if we define our Banach space
$\boB$ using $C^k$ regularity, then the gain is better, of order
$\lambda^{k}$. Choosing $k$ large enough (at the complete end of the
proof), we get estimates as precise as we want, getting arbitrarily
close to $1/4$.

In view of this argument, two remarks can be made. Firstly,
since we need to use very high regularity, our proof can not be
done using a symbolic model since the discontinuities at the
boundaries would spoil the previous argument. Secondly, since
$k$ is chosen at the very end of the proof, we have to make
sure that all our bounds, which already have to be uniform in
the non-compact space $X/\Gamma$, are also uniform in $k$.

\medskip

The paper is organized as follows. In Section
\ref{conclusive_section}, we introduce necessary background on
irreducible unitary representations of $\SL$, and show that
Theorem~\ref{global_main_thm} follows from a statement on
spectral properties of the differentiation $L_\omega$ in the
flow direction (Theorem
\ref{thm_control_spectral_radius_annonce}). In Section
\ref{section_local_product}, we get a precise description of
admissible measures, showing that they have a nice local
product structure. Along the way, we prove
Proposition~\ref{prop_smooth_measure_is_projective}. In Section
\ref{sec_Finsler}, we establish several technical properties of
the $C^k$ norm with respect to the Finsler metric of
\cite{AGY_teich} that will be instrumental when defining our
Banach space $\boB$. In Section \ref{section_recurrence}, we
reformulate the recurrence estimates of Eskin-Masur
\cite{eskin_masur} in a form that is convenient for us.
Finally, we define the Banach space $\boB$ in Section
\ref{par_construct_norms}, and prove Theorem
\ref{thm_control_spectral_radius_annonce} in Section
\ref{sec_first_step}.

\section{Proof of the main theorem: the general part}
\label{conclusive_section}

\subsection{Functional analytic prerequisites}

Let $\boLo$ be a bounded operator on a complex Banach space
$(\Bo,\norm{\cdot})$. A complex number $z$ belongs to the spectrum
$\sigma(\boLo)$ of $\boLo$ if $zI-\boLo$ is not invertible. If $z$ is
an isolated point in the spectrum of $\boLo$, we can define the
corresponding spectral projection $\Pi_z\coloneqq
\frac{1}{2\ic\pi}\int_C (wI-\boLo)^{-1} \dd w$, where $C$ is a small
circle around $z$ (this definition is independent of the choice of
$C$). Then $\Pi_z$ is a projection, its image and kernel are
invariant under $\boLo$, and the spectrum of the restriction of
$\boLo$ to the image is $\{z\}$, while the spectrum of the
restriction of $\boLo$ to the kernel is $\sigma(\boLo)-\{z\}$. We say
that $z$ is an isolated eigenvalue of finite multiplicity of $\boLo$
if the image of $\Pi_z$ is finite-dimensional, and we denote by
$\sigmaess(\boLo)$ the essential spectrum of $\boLo$, i.e., the set
of elements of $\sigma(\boLo)$ that are not isolated eigenvalues of
finite multiplicity.

The spectral radius of $\boLo$ is $r(\boLo)\coloneqq \sup\{|z| \st
z\in \sigma(\boLo)\}$, and its essential spectral radius is
$\ress(\boLo) \coloneqq \sup\{|z| \st z\in \sigmaess(\boLo)\}$. These
quantities can also be computed as follows: $r(\boLo) = \inf_{n\in
\N} \norm{\boLo^n}^{1/n}$, and $\ress(\boLo) = \inf \norm{\boLo^n
-K}^{1/n}$, where the infimum is over all integers $n$ and all
compact operators $K$. In particular, we get that the essential
spectral radius of a compact operator is $0$, i.e., the spectrum of a
compact operator is made of a sequence of isolated eigenvalues of
finite multiplicity tending to $0$, as is well known.

So-called Lasota-Yorke inequalities can also be used to
estimate the essential spectral radius:
\begin{lem}
\label{lemme_hennion}
Assume that, for some $n>0$ and for all $x\in \Bo$, we have
  \begin{equation*}
  \norm{\boLo^n x} \leq M^n \norm{x} + \normdeux{x},
  \end{equation*}
where $\normdeux{\cdot}$ is a seminorm on $\Bo$ such that the unit ball of
$\Bo$ (for $\norm{\cdot}$) is relatively compact for $\normdeux{\cdot}$. Then
$\ress(\boLo) \leq M$.
\end{lem}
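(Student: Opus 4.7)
The approach I propose is Hennion's classical argument via the Kuratowski measure of non-compactness. Recall that for a bounded subset $A \subset \Bo$ one sets $\alpha(A) \coloneqq \inf\{r>0 : A$ admits a finite cover by sets of $\norm{\cdot}$-diameter $\leq r\}$, and Nussbaum's formula asserts that $\ress(\boLo) = \lim_{k\to\infty} \alpha(\boLo^{k} B_1)^{1/k}$, where $B_1$ is the closed unit ball of $(\Bo,\norm{\cdot})$. It therefore suffices to establish a bound of the form $\alpha(\boLo^{nk} B_1) \leq C M^{nk}$ uniformly in $k$.

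The heart of the argument is to upgrade the pointwise Lasota--Yorke inequality into the geometric contraction estimate $\alpha(\boLo^n A) \leq M^n \alpha(A)$, valid for every bounded $A \subset \Bo$. Given $\beta > \alpha(A)$, I first cover $A$ by finitely many sets $A_j$ of $\norm{\cdot}$-diameter at most $\beta$. Each $A_j$ is bounded in $\norm{\cdot}$, hence (by the compact embedding hypothesis on the unit ball) relatively compact, in particular totally bounded, for $\normdeux{\cdot}$; so for any $\epsilon>0$ I can refine the cover into finitely many subsets $A_{jk}$ of $\normdeux{\cdot}$-diameter at most $\epsilon$. Applying the Lasota--Yorke inequality to the difference $x-y$ of two points of $A_{jk}$ yields
\[
\norm{\boLo^n(x-y)} \leq M^n \norm{x-y} + \normdeux{x-y} \leq M^n \beta + \epsilon,
\]
so each image $\boLo^n(A_{jk})$ has $\norm{\cdot}$-diameter at most $M^n \beta + \epsilon$. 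Letting $\beta \to \alpha(A)$ and $\epsilon \to 0$ gives the contraction estimate.

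Iterating this contraction starting from $A = B_1$ produces $\alpha(\boLo^{nk} B_1) \leq M^{nk} \alpha(B_1) \leq 2 M^{nk}$ (using $\alpha(B_1) \leq \mathrm{diam}(B_1) = 2$), and plugging into Nussbaum's formula yields $\ress(\boLo) \leq \lim_{k\to\infty} (2M^{nk})^{1/(nk)} = M$, as required. I do not foresee a genuine obstacle: the argument is a standard piece of functional analysis whose single nontrivial external input is the Kuratowski--Nussbaum characterization of $\ress$, and the Lasota--Yorke inequality combines with the compact embedding $(\Bo, \norm{\cdot}) \hookrightarrow (\Bo, \normdeux{\cdot})$ in precisely the way needed to produce the diameter squeezing on which the bound for $\alpha$ rests.
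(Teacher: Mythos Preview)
Your argument is correct and is precisely Hennion's classical proof via the Kuratowski measure of non-compactness combined with Nussbaum's formula. The paper does not give its own proof of this lemma: it simply cites Hennion's original paper and \cite[Lemma~2.2]{BGK_coupling} for the statement in this form, so your write-up is in effect supplying the referenced argument.
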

This has essentially been proved by Hennion in \cite{hennion},
the statement in this precise form can be found in \cite[Lemma
2.2]{BGK_coupling}.

Assume now that $\boL$ is a bounded operator on a complex
normed vector space $(B,\norm{\cdot})$, but that $B$ is not
necessarily complete. Then $\boL$ extends uniquely to a bounded
operator $\boLo$ on the completion $\Bo$ of $B$ for the norm
$\norm{\cdot}$. We will abusively talk about the spectrum,
essential spectrum or essential spectral radius of $\boL$,
thinking of the same data for $\boLo$.

\subsection{Main spectral result}
\label{subsec_main_result_first_step}

Let $\tilde\mu$ be an admissible measure supported on a manifold $X$,
and let $\mu$ be its projection in $X/\Gamma$.

We want to study the spectral properties of the differentiation
operator $L_\omega$ in the direction $\omega$ of the flow
$g_t$. As in \cite{liverani_contact}, it turns out to be easier
to study directly the resolvent of this operator, given by
$R(z) f = \int_{t=0}^\infty e^{-z t} f\circ g_t \dd t$.

Given $\delta>0$, we will study the operator $\boM=R(4\delta)$
on the space $\boD^\Gamma$ of $C^\infty$ functions on $X$,
$\Gamma$--invariant and compactly supported in $X/\Gamma$. Of
course, $\boM f$ is not any more compactly supported, so we
should be more precise.

We want to define a norm $\norm{\cdot}$ on $\boD^\Gamma$ such
that, for any $f \in \boD^\Gamma$, the function $f\circ g_t$
(which still belongs to $\boD^\Gamma$) satisfies $\norm{f \circ
g_t} \leq C \norm{f}$, for some constant $C$ independent of
$t$. Denoting by $\boDb$ the completion of $\boD^\Gamma$ for
the norm $\norm{\cdot}$, the operator $\boL_t : f\mapsto f\circ
g_t$ extends continuously to an operator on $\boDb$, whose norm
is bounded by $C$. Therefore, the operator $\boM \coloneqq
\int_{t=0}^\infty e^{-4\delta t} \boL_t$ acts continuously on
the Banach space $\boDb$, and it is meaningful to consider its
essential spectral radius. We would like this essential
spectral radius to be quite small. Since $\norm{f\circ g_t}\leq
C \norm{f}$, the trivial estimate on the spectral radius of
$\boM$ is $C\int_{t=0}^\infty e^{-4\delta t}\dd t=
C/(4\delta)$. This blows up when $\delta$ tends to $0$. We will
get a significantly better bound on the essential spectral
radius in the following theorem.

\begin{thm}
\label{thm_control_spectral_radius_annonce}
There exists a norm on $\boD^\Gamma$ satisfying the requirement
$\norm{f \circ g_t} \leq C \norm{f}$ (uniformly in $f\in
\boD^\Gamma$ and $t\geq 0$), such that the essential spectral
radius of $\boM$ for this norm is at most $1+\delta$.

Moreover, for any $f_1 \in \boD^\Gamma$, the linear form
$f\mapsto \int_{X/\Gamma} f_1 f\dd\mu$ extends continuously
from $\boD^\Gamma$ to its closure $\boDb$.
\end{thm}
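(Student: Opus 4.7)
The strategy is the anisotropic-Banach-space approach to hyperbolic dynamics (following \cite{gouezel_liverani, bt_aniso, butterley_liverani}), adapted with two ingredients specific to moduli space: the Finsler metric of \cite{AGY_teich}, which provides $C^k$ regularity estimates that are uniform over the non-compact base, and the Eskin-Masur function $V_\delta$, which absorbs the non-compactness of $X/\Gamma$. The plan is to fix a large integer $k$ (to be chosen only at the end, depending on $\delta$) and define $\norm{\cdot}$ on $\boD^\Gamma$ as an anisotropic norm measuring $k$ derivatives of $f$ along the stable foliation of $g_t$ in an $L^\infty$ sense, together with the dual of $C^k$ along unstable plaques of unit Finsler size, the whole weighted at each base point by $V_\delta$. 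An auxiliary seminorm $\normdeux{\cdot}$ is defined analogously with $k$ replaced by $k-1$ and with plaques restricted to a fixed large compact set. The Finsler-metric estimates of Section \ref{sec_Finsler} combined with a uniform Arzel\`a-Ascoli argument then ensure that the $\norm{\cdot}$-unit ball is relatively $\normdeux{\cdot}$-compact, as required by Lemma \ref{lemme_hennion}.

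The bound $\norm{f \circ g_t} \leq C \norm{f}$ follows from the affine nature of the flow (its stable and unstable foliations are affine and derivatives transform linearly with norms controlled by the Finsler metric) together with the recurrence bound of Section \ref{section_recurrence}, which ensures $V_\delta \circ g_t \leq C V_\delta$ in the relevant averaged sense. Consequently $\boL_t$ extends to a bounded semigroup on the completion $\boDb$ and $\boM = \int_0^\infty e^{-4\delta t}\boL_t \dd t$ is a bounded operator; it then suffices to control the essential spectral radius of $\boL_{T_0}$ for a well-chosen time $T_0$. To this end, choose a large compact set $K \subset X/\Gamma$ and a smooth partition of unity $1 = \chi_1 + \chi_2$ with $\chi_1$ supported in a neighborhood of $K$. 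Write $\boL_{T_0} = \tilde\boL_1 + \tilde\boL_2$ with $\tilde\boL_i f = \chi_i \cdot (f \circ g_{T_0})$, and expand
\begin{equation*}
\boL_{T_0}^n \;=\; \sum_{\gamma \in \{1,2\}^n} \tilde\boL_{\gamma_n} \cdots \tilde\boL_{\gamma_1}.
\end{equation*}
In each word $\gamma$, every factor $\tilde\boL_2$ contributes a definite gain from the Eskin-Masur weight (since outside $K$ the flow pushes $V_\delta$ strictly inward), while every factor $\tilde\boL_1$ contributes a hyperbolic gain of order $\lambda_K^k$ coming from the $k$-th stable derivative in the definition of $\norm{\cdot}$ (with $\lambda_K < 1$ the hyperbolicity rate of $g_{T_0}$ on $K$). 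Summing over the $2^n$ words and separating those for which many indices are $1$ from those for which many are $2$ should yield a Lasota-Yorke inequality $\norm{\boL_{T_0}^n f} \leq M^n \norm{f} + C_n \normdeux{f}$ with $M$ arbitrarily small once $K$ and $T_0$ are fixed large and then $k$ is taken large enough. Lemma \ref{lemme_hennion} then gives $\ress(\boL_{T_0}) \leq M$, and semigroup arguments in the spirit of \cite{butterley_liverani} translate this into $\ress(\boM) \leq 1 + \delta$ provided $M$ is chosen small enough relative to $T_0$ and $\delta$.

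The continuity of the linear form $f \mapsto \int_{X/\Gamma} f_1 f \dd\mu$ on $\boD^\Gamma$ should follow from the local product structure of $\mu$ given by Proposition \ref{prop_local_product}: the integral against a smooth, compactly supported $f_1$ can be rewritten as a finite average of pairings between $f$ and explicit $C^k$ densities supported on unstable plaques contained in a compact set where $V_\delta$ is bounded, and this average is dominated by $\norm{f}$ times a constant depending on $f_1$, so the extension to $\boDb$ is automatic.

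The hard part will be the Lasota-Yorke estimate. The delicate issues are: (i) defining the anisotropic norm so that the splitting into stable derivatives and unstable test densities is compatible with differentiation in the Finsler metric, and so that the partition of unity $\chi_1,\chi_2$ can be inserted without losing more than geometrically summable constants; (ii) keeping all constants uniform in $k$, since the $\lambda_K^k$ gain must not be spoiled by $k$-dependent losses elsewhere (this is precisely the reason a symbolic reduction is ruled out, as the artificial boundaries of a Markov partition would introduce $k$-blown constants); and (iii) handling words $\gamma$ that switch unpredictably between $\tilde\boL_1$ and $\tilde\boL_2$, combining the hyperbolic gain inside $K$ with the Eskin-Masur gain outside $K$ in a way that survives summation over all $2^n$ words.
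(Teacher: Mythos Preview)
Your overall strategy matches the paper's: anisotropic norms weighted by the Eskin--Masur function, with the Finsler metric providing uniform $C^k$ estimates, and a Lasota--Yorke inequality obtained by splitting $\boL_{T_0}$ into a ``compact'' piece $\tilde\boL_1$ and an ``infinity'' piece $\tilde\boL_2$. There is, however, a genuine gap in your description of the norm. You say it measures stable derivatives and unstable duality; in fact it must also include derivatives in the \emph{flow direction} $\omega$. The reason is the compactness step (your Arzel\`a--Ascoli argument, the paper's Proposition~\ref{prop_compactness}): comparing the pairing of $f$ with a test density on $W^u(x_0)$ to the pairing on a nearby plaque $W^u(x_1)$ produces, after differentiating along the path $x_t$, an extra derivative of $f$ in an \emph{arbitrary} direction, which decomposes into unstable (integrated by parts), stable, and flow components. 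Without flow-direction derivatives in the norm, this last term cannot be absorbed (see the proof of Lemma~\ref{lem_equiv_norms}). But the flow direction is neutral for $g_t$, so it receives no hyperbolic gain and your $\lambda_K^k$ mechanism does not apply to it. The paper handles this by a separate trick (Lemma~\ref{lem_omega}): it works directly with $\boM^N$ rather than $\boL_{T_0}^n$, and uses the identity $L_\omega \int_0^D h(t)\boL_t f\,\dd t = h(D)\boL_D f - h(0)f - \int_0^D h'(t)\boL_t f\,\dd t$ to trade a flow derivative for lower-order terms. This integration by parts in $t$ is available only for the resolvent $\boM$, not for $\boL_{T_0}$, so your plan to establish Lasota--Yorke for $\boL_{T_0}^n$ first and then transfer to $\boM$ by ``semigroup arguments'' runs into trouble precisely here.

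A secondary imprecision: the hyperbolic gain does not come \emph{per factor} $\tilde\boL_1$ as you write. The contraction in Proposition~\ref{pliss_hyperbolicity} kicks in only once the total time spent in $K$ exceeds a threshold $T(K)$, and the paper extracts a \emph{single} gain $2^{-k/2}$ from any word $\gamma$ with at least $T/T_0$ ones (Lemma~\ref{maincontraction}), regardless of how many. The multiplicative, per-factor gain in Lemma~\ref{lem_close_infinity} comes solely from the Eskin--Masur weight attached to the $\tilde\boL_2$ factors; every factor of either type also carries a fixed loss $10C_0$. Summing over the $2^n$ words, the paper must then choose $k$ large \emph{depending on $n$} so that $2^n \cdot 2^{-k/2}$ beats the target rate --- this is exactly why $k$ is fixed only at the very end, a point you correctly anticipate in (ii).
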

This theorem is proved in Section \ref{sec_first_step}. The
main point is of course the assertion on the essential spectral
radius, the last one is a technicality that we will need later
on.

Let us admit this result for the moment, and see how it implies
our main result, Theorem \ref{global_main_thm}. Since Theorem
\ref{thm_control_spectral_radius_annonce} deals with the
spectrum of $L_\omega$, it is not surprising that it implies a
description of the spectrum of the action of $\SL$. However, we
only control the spectrum of $L_\omega$ on a quite exotic
Banach space of distributions. To obtain information on the
action of $\SL$, we will therefore follow an indirect path,
through meromorphic extensions of Laplace transforms of
correlation functions. (It seems desirable to find a more
direct and more natural route.)

\subsection{Meromorphic extensions of Laplace transforms}

From Theorem \ref{thm_control_spectral_radius_annonce}, we will
obtain in this section a meromorphic extension of the Laplace
transform of the correlations of smooth functions, to a
suitable domain described as follows. For $\delta,\epsilon>0$,
define $D_{\delta,\epsilon}\subset \C$ as the set of points
$z=x+\ic y$ such that either $x>0$, or $(x,y)\in [-1+6\delta,0]
\times [-\epsilon,\epsilon]$.

\begin{prop}
\label{prop_mero_ext}
Let $\delta>0$. Let $f_1, f_2 \in \boD^\Gamma$, define for $\Re
z>0$ a function $F(z)=F_{f_1,f_2}(z)= \int_{t=0}^\infty e^{-zt}
\left(\int_{X/\Gamma}f_1\cdot f_2 \circ g_t\dd\mu\right) \dd
t$. Then, for some $\epsilon>0$, the function $F$ admits a
meromorphic extension to (a neighborhood of) $D_{\delta,\epsilon}$.

Moreover, the poles of $F$ in $D_{\delta,\epsilon}$ are located in
the set $\{4\delta-1/\lambda_1, \dots,4\delta-1/\lambda_I\}$, where
the $\lambda_i$ are the finitely many eigenvalues of modulus at least
$1+2\delta$ of the operator $\boM=R(4\delta)$ acting on the space
constructed in Theorem \ref{thm_control_spectral_radius_annonce}. The
residue of $F$ at such a point $4\delta-1/\lambda_i$ is equal to $
\int_{X/\Gamma} f_1 \cdot \Pi_{\lambda_i} f_2 \dd\mu$, where
$\Pi_{\lambda_i}$ is the spectral projection of $\boM$ associated to
$\lambda_i \in \sigma(\boM)$.
\end{prop}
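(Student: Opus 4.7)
The plan is to represent $F$ as a matrix coefficient of the resolvent $R(z)\coloneqq\int_0^\infty e^{-zt}\boL_t\dd t$ and then meromorphically continue $R(z)$ as an operator on $\boDb$ by tying it algebraically to the single operator $\boM=R(4\delta)$, whose spectrum is controlled by Theorem~\ref{thm_control_spectral_radius_annonce}. For $\Re z>0$, the uniform bound $\norm{\boL_t}\leq C$ makes the Bochner integral defining $R(z)$ converge in operator norm on $\boDb$ and depend analytically on $z$; the ``moreover'' clause of Theorem~\ref{thm_control_spectral_radius_annonce} provides a continuous linear form $\Lambda_{f_1}(f)\coloneqq\int f_1 f\dd\mu$ on $\boDb$, so that $F(z)=\Lambda_{f_1}(R(z)f_2)$. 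The meromorphic extension of $F$ is thereby reduced to that of the operator-valued function $z\mapsto R(z)$.

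Next I would establish the resolvent identity $R(z)-R(w)=(w-z)R(z)R(w)$ for $\Re z,\Re w>0$, by writing $R(z)R(w)$ as a double Bochner integral and applying Fubini together with the semigroup law $\boL_s\boL_t=\boL_{s+t}$. Specializing $w=4\delta$ and solving for $R(z)$ yields
\begin{equation*}
R(z)\;=\;\boM\,\bigl[I-(4\delta-z)\boM\bigr]^{-1},
\end{equation*}
valid wherever the bracket is invertible. Rewriting $I-(4\delta-z)\boM=(4\delta-z)\bigl[(4\delta-z)^{-1}I-\boM\bigr]$ for $z\neq 4\delta$ shows that the right-hand side is a rescaled resolvent of $\boM$; it thus defines an operator-valued meromorphic function of $z$ on $\{z\st(4\delta-z)^{-1}\notin\sigma(\boM)\}$, and agrees on overlaps with the directly defined $R(z)$ by analytic continuation.

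I would then locate the poles using Theorem~\ref{thm_control_spectral_radius_annonce}: since $\ress(\boM)\leq 1+\delta$, the spectrum of $\boM$ outside the closed disk of radius $1+\delta$ is discrete with finite multiplicities, and only finitely many eigenvalues $\lambda_1,\dots,\lambda_I$ have modulus $\geq 1+2\delta$. A short geometric check shows that for $\epsilon>0$ small, $D_{\delta,\epsilon}\subset\{\Re z>0\}\cup\{|4\delta-z|<1/(1+2\delta)\}$: for $z=x+iy$ with $x\in[-1+6\delta,0]$ and $|y|\leq\epsilon$,
\begin{equation*}
|4\delta-z|^2\;\leq\;(1-2\delta)^2+\epsilon^2,
\end{equation*}
and the elementary identity $(1-2\delta)(1+2\delta)=1-4\delta^2<1$ yields $(1-2\delta)^2<1/(1+2\delta)^2$, so any $\epsilon$ with $\epsilon^2<1/(1+2\delta)^2-(1-2\delta)^2$ suffices. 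In this disk $(4\delta-z)^{-1}$ has modulus $>1+2\delta$, so it avoids both the essential spectrum and all eigenvalues of modulus $<1+2\delta$; the only poles of $R(z)$ in $D_{\delta,\epsilon}$ are therefore the $z_i=4\delta-1/\lambda_i$.

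For the residue at $z_i$, I would substitute $u=(4\delta-z)^{-1}$ and use the identity $\boM[uI-\boM]^{-1}=u[uI-\boM]^{-1}-I$ to rewrite $R(z)=u^2[uI-\boM]^{-1}-u$. On the generalized eigenspace of $\lambda_i$, the Laurent expansion $[uI-\boM]^{-1}=\Pi_{\lambda_i}/(u-\lambda_i)+\sum_{k\geq 1}N_i^k/(u-\lambda_i)^{k+1}$ (with $N_i$ the nilpotent part), combined with the identity $u-\lambda_i=\lambda_i(z-z_i)/(4\delta-z)$, yields after pulling back a pole of $R(z)$ at $z_i$. The $k\geq 1$ (Jordan) contributions produce only higher-order poles in $z-z_i$, because the resulting numerator $(4\delta-z)^{k-1}$ is a polynomial of degree $k-1$ in $z-z_i$ and hence contributes no $(z-z_i)^{-1}$ term after division by $(z-z_i)^{k+1}$; the $k=0$ contribution yields residue exactly $\Pi_{\lambda_i}$. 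Pairing with $\Lambda_{f_1}$ gives the asserted residue $\int f_1\,\Pi_{\lambda_i}f_2\dd\mu$. The main obstacle will be (a) the rigorous justification of the Bochner integrals and of the resolvent identity intrinsically on $\boDb$, without assuming a globally defined infinitesimal generator, and (b) the Jordan-block bookkeeping; the geometric check for the domain of meromorphy is then a routine computation.
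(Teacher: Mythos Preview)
Your proposal is correct and follows essentially the same route as the paper: both express the continuation via the identity $R(z)=\boM\bigl[I-(4\delta-z)\boM\bigr]^{-1}$ (equivalently, the paper's $S(z)=\frac{1}{z_0-z}\boM\bigl(\frac{1}{z_0-z}-\boM\bigr)^{-1}$), use the spectral information on $\boM$ from Theorem~\ref{thm_control_spectral_radius_annonce} to locate the poles, and pair against $f_1$ via the continuous functional. The only cosmetic differences are that the paper verifies agreement with $F$ near $z_0=4\delta$ by expanding $\boM(1-(z_0-z)\boM)^{-1}$ as a Neumann series in powers of $\boM$ (rather than via your resolvent identity), and computes the residue more cleanly by the substitution $w=1/(z_0-z)$, which turns $\frac{1}{2\pi i}\int_{C(z_i)}S(z)\dd z$ directly into $\frac{1}{2\pi i}\int_{C(\lambda_i)}(w-\boM)^{-1}\dd w=\Pi_{\lambda_i}$ and bypasses your Jordan-block bookkeeping.
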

\begin{proof}
Heuristically, we have $F(z)=\int_{X/\Gamma} f_1 R(z)
f_2\dd\mu$ where $R(z)=\int_{t=0}^\infty e^{-zt} f\circ g_t$,
and moreover $R(z)=(z-L_\omega)^{-1}$ where $L_\omega$ is the
differentiation in the direction $\omega$. Let us fix
$z_0=4\delta$. The spectral properties of
$\boM=R(z_0)=(z_0-L_\omega)^{-1}$ are well controlled by
Theorem \ref{thm_control_spectral_radius_annonce}. In view of
the formal identity
  \begin{equation*}
  (z-L_\omega)^{-1} = (z_0-z)^{-1} (z_0-L_\omega)^{-1} ( (z_0-z)^{-1} - (z_0-L_\omega)^{-1})^{-1},
  \end{equation*}
we are led to define an operator
  \begin{equation}
  \label{defS}
  S(z) = \frac{1}{z_0-z} \boM \left( \frac{1}{z_0-z} -\boM \right)^{-1},
  \end{equation}
which should coincide with $R(z)$. In particular, we should
have the equality $F(z)= \int_{X/\Gamma} f_1 S(z)f_2 \dd\mu$.
Since $S(z)$ is defined for a large set of values of $z$, this
should define the requested meromorphic extension of $F$ to a
larger domain.

Let us start the rigorous argument. Let $\boDb$ be the Banach space
constructed in Theorem \ref{thm_control_spectral_radius_annonce}, and
let $\lambda_1,\dots,\lambda_I$ be the finitely many eigenvalues of
modulus $\geq 1+2\delta$ of $\boM$ acting on $\boDb$. For $z$ with
$1/|z_0-z|\geq 1+2\delta$ and $1/(z_0-z) \not\in
\{\lambda_1,\dots,\lambda_I\}$, we can define on $\boDb$ an operator
$S(z)$ by the formula \eqref{defS}. It is holomorphic on
$D_{\delta,\epsilon}\backslash\{4\delta-1/\lambda_1,
\dots,4\delta-1/\lambda_I\}$. Since the points $4\delta -1/\lambda_i$
are poles of finite order (see e.g.~\cite[III.6.5]{kato_pe}), $S(z)$
is even meromorphic on $D_{\delta,\epsilon}$. Let us finally set
$G(z) = \int_{X/\Gamma} f_1 S(z)f_2 \dd\mu \in \C$, this is well
defined by the last statement in Theorem
\ref{thm_control_spectral_radius_annonce}. The function $G$ is
meromorphic and defined on the set $D_{\delta,\epsilon}$, with
possible poles at the points $z_0-1/\lambda_1,
\dots,z_0-1/\lambda_I$. To conclude, we just have to check that $F$
and $G$ coincide in a neighborhood of $z_0$.

If $z$ is very close to $z_0$, $1/(z_0-z)$ is very large so
that all series expansions are valid. Then the formula
\eqref{defS} gives
  \begin{equation*}
  S(z) f_2 = \boM (1 - (z_0-z) \boM)^{-1} f_2
  = \sum_{k=0}^\infty (z_0-z)^k \boM^{k+1} f_2.
  \end{equation*}
Since $\boM^{k+1} f = \int_{t=0}^\infty \frac{t^{k}}{k!}
e^{-z_0 t} f\circ g_t \dd t$, we obtain
  \begin{equation*}
  S(z)f_2 = \int_{t=0}^\infty \sum_{k=0}^\infty (z_0-z)^k \frac{t^k}{k!} e^{-z_0 t} f\circ g_t \dd t
  = \int_{t=0}^\infty e^{(z_0-z)t} e^{-z_0 t} f_2 \circ g_t \dd t.
  \end{equation*}
This gives the desired result after multiplying by $f_1$ and
integrating.

Let us now compute the residue of $S(z)f_2$ around a point
$z_0-1/\lambda_i$. We have
  \begin{align*}
  S(z) & = \frac{1}{z_0-z} \left(\frac{1}{z_0-z} + \boM -\frac{1}{z_0-z} \right)
    \left( \frac{1}{z_0-z} -\boM \right)^{-1}
  \\&
  =\frac{1}{(z_0-z)^2} \left( \frac{1}{z_0-z} -\boM \right)^{-1} - \frac{1}{z_0-z}.
  \end{align*}
The term $-(z_0-z)^{-1}$ is holomorphic around
$z_0-1/\lambda_i$. Therefore, the residue of $S$ around this point is
given by
  \begin{multline*}
  \frac {1} {2\ic\pi} \int_{C(z_0-1/\lambda_i)}\frac{1}{(z_0-z)^2} \left( \frac{1}{z_0-z} -\boM \right)^{-1} \dd z
  \\
  = \frac {1} {2\ic\pi} \int_{C(\lambda_i)} w^2 (w-\boM)^{-1} \frac{\dd w}{w^2}
  = \Pi_{\lambda_i},
  \end{multline*}
where $C(u)$ denotes a positively oriented path around the point $u$
and we have written $w=1/(z_0-z)$. This concludes the proof.
\end{proof}

\subsection{Background on unitary representations of \texorpdfstring{$\SL$}{SL(2,R)}}

\label{subsec_background_SL}

Let us describe (somewhat informally) the notion of direct
decomposition of a representation. See e.g.~\cite{dixmier} for
all the details.

Let $H_\xi$ be a family of representations of $\SL$, depending
on a parameter $\xi$ in a space $\Xi$, and assume that this
family of representations is measurable (in a suitable sense).
If $m$ is a measure on $\Xi$, one can define the direct
integral $\int H_\xi \dd m(\xi)$: an element of this space is a
function $f$ defined on $\Xi$ such that $f(\xi) \in H_\xi$ for
all $\xi$, with $\norm{f}^2 \coloneqq \int
\norm{f(\xi)}^2_{H_\xi} \dd m(\xi) < \infty$. The group $\SL$
acts unitarily on this direct integral, by $(g\cdot f)(\xi)=g(
f(\xi))$. If $m'$ is another measure equivalent to $m$, then
the representations $\int H_\xi \dd m(\xi)$ and $\int H_\xi \dd
m'(\xi)$ are isomorphic.

From now on, let $\Xi$ be the space of all irreducible unitary
representations of $\SL$, with its canonical Borel structure (that we
will describe below). Any unitary representation $H$ of $\SL$ is
isomorphic to a direct integral $\int H_\xi \dd m(\xi)$, where the
space $H_\xi$ is a (finite or countable) direct sum of one or several
copies of the same representation $\xi$ (we say that $H_\xi$ is
quasi-irreducible). Moreover, the measure class of the measure $m$,
and the multiplicity of $\xi$ in $H_\xi$, are uniquely defined
(\cite[Th\'eor\`eme 8.6.6]{dixmier}), and the representation $H$ is
characterized by these data.

\smallskip

Let us now describe $\Xi$ more precisely. The irreducible
unitary representations of $\SL$ have been classified by
Bargmann, as follows. An irreducible unitary representation of
$\SL$ belongs to one of the following families:
\begin{itemize}
\item Representations $\boD_{m+1}^+$ and $\boD_{m+1}^-$, for $m \in
\N$. This is the discrete series (except for $m=0$, where
the situation is slightly different: these representations
form the ``mock discrete series'').
\item Representations $\boP^{+, \ic v}$ for $v\in [0,+\infty)$
and $\boP^{-,\ic v}$ for $v\in (0,\infty)$. This is the
principal series (these representations can also be defined
for $v<0$, but they are isomorphic to the same
representations with parameter $-v>0$).
\item Representations $\boC^u$ for $0<u<1$. This is the
complementary series.
\item The trivial representation.
\end{itemize}
These representations are described with more details in
\cite[II.5]{knapp}. They are all irreducible, no two of them
are isomorphic, and any irreducible unitary representation of
$\SL$ appears in this list. In particular, to any irreducible
representation $\xi$ of $\SL$ is canonically attached a complex
parameter $s(\xi)$ (equal to $m$ in the first case, $\ic v$ in
the second, $u$ in the third and $1$ in the fourth), and the
Borel structure of $\SL$ is given by this parameter (and the
discrete data $\pm$ in the first two cases).

\smallskip

The Casimir operator $\Omega$ is a generator of the center of
the enveloping algebra of $\SL$, i.e., it is a differential
operator on $\SL$, commuting with every translation, and of
minimal degree. It is unique up to scalar multiplication, and
we will normalize it as
  \begin{equation}
  \label{def_casimir}
  \Omega = (L_W^2 - L_\omega^2 - L_V^2) /4,
  \end{equation}
where $W = \smat 0 1 {-1} 0$, $\omega = \smat 1 0 0 {-1}$ and $V =
\smat 0 1 1 0$ are elements of the Lie algebra of $\SL$, and $L_Z$
denotes the Lie derivative on $\SL$ with respect to the left
invariant vector field equal to $Z$ at the identity.

The Casimir operator extends to an unbounded operator in every
unitary representation of $\SL$. Since it commutes with
translations, it has to be scalar on irreducible
representations. With the notations we have set up earlier, it
is equal to $(1-s(\xi)^2)/4 \in \R$ on an irreducible unitary
representation $\xi$ of parameter $s(\xi)$.

\smallskip

An irreducible unitary representation $\xi$ of $\SL$ is
\emph{spherical} if it contains an $\SO$-invariant non-trivial
vector. In this case, the $\SO$-invariant vectors have
dimension $1$, let $v$ be an element of unit norm in this set. The
spherical function $\phi_\xi$ is defined on $\SL$ by
  \begin{equation}
  \label{def_spherical}
  \phi_\xi(g)=\langle g\cdot v, v\rangle,
  \end{equation}
it is independent of the choice of $v$. Taking $g=g_t$, the
spherical function is simply the correlations of $v$ under the
diagonal flow.

The spherical unitary irreducible representations are the
representations $\boP^{+, \ic v}$ and $\boC^u$ (and the trivial
one, of course).

\smallskip

Assume now that $\SL$ acts on a space $Y$ and preserves a
probability measure $\mu$. Then $\SL$ acts unitarily on
$L^2(Y,\mu)$ by $g\cdot f(x)=f(g^{-1} x)$. Therefore, the
Casimir operator also acts $L^2(Y,\mu)$ (as an unbounded
operator). Since it commutes with translations, it leaves
invariant the space $L^2(\SO\backslash Y, \mu)$ (i.e., the
space of functions on $Y$ that are $\SO$-invariant and
square-integrable with respect to $\mu$). On this space,
$\Omega$ can also be described geometrically as a foliated
Laplacian, as follows.

For $x\in Y$, its orbit mod $\SO$ is identified with $\HH =
\SL/\SO$, by the map $g\SO \mapsto \SO g^{-1} x$ (and changing
the basepoint $x$ in the orbit changes the parametrization by
an  $\SL$ element). Therefore, any structure on $\HH$ which is
$\SL$ invariant can be transferred to $\SO\backslash Y$. This
is in particular the case of the hyperbolic metric of curvature
$-1$, and of the corresponding hyperbolic Laplacian $\Delta$
given in coordinates $(x_\HH,y_\HH) \in \HH$ by $-y_\HH \left(
\frac{\partial^2}{\partial x_\HH^2} +
\frac{\partial^2}{\partial y_\HH^2} \right)$.

Let $f_K$ be a function on $\SO \backslash Y$ belonging to the
domain of $\Delta$, and let $f$ be its canonical lift to $Y$.
Then $\Omega f$ is $\SO$-invariant, and is the lift of the
function $\Delta f_K$ on $\SO \backslash Y$. This follows at
once from the definitions (and our choice of normalization in
\eqref{def_casimir}).

Consider the decomposition $L^2(Y,\mu) \simeq \int_\Xi H_\xi
\dd m(\xi)$ of the representation of $\SL$ on $L^2(Y,\mu)$ into
an integral of quasi-irreducible representations. Denoting by
$H_\xi^{\SO}$ the $\SO$-invariant vectors in $H_\xi$, we have
$L^2(\SO\backslash Y, \mu)\simeq \int_\Xi H_\xi^{\SO} \dd
m(\xi)$. Therefore, the spectrum of $\Delta$ on
$L^2(\SO\backslash Y, \mu)$ is equal to the set $\{
(1-s(\xi)^2)/4\}$, for $\xi$ a spherical representation in the
support of $m$ (moreover, the spectral measure of $\Delta$ is
the image of $m$ under this map). Since the spectrum of the
Casimir operator in the interval $(0,1/4)$ only comes from the
complementary series representations, which are all spherical,
it follows that $\sigma( \Delta) \cap (0,1/4) = \sigma(\Omega)
\cap (0,1/4)$, and that the spectral measures coincide.
Therefore, it is equivalent to understand $\sigma( \Delta) \cap
(0,1/4)$ or to understand representations in the complementary
series arising in $L^2(Y,\mu)$. While the former point of view
is more elementary, the latter puts it in a larger (and,
perhaps, more significant) perspective.


\subsection{Meromorphic extensions of Laplace transforms in
abstract \texorpdfstring{$\SL$}{SL(2,R)} representations}

Given $H$ a unitary representation of $\SL$, let us decompose
it as $\int_{\Xi} H_\xi \dd m(\xi)$ where $\Xi$ is the set of
unitary irreducible representations of $\SL$, and $H_\xi$ is a
direct sum of copies of the irreducible representation $\xi$.
For $f\in H$, we will denote by $f_\xi$ its component in
$H_\xi$.

Let us denote by $\Xi^\SO$ the set of spherical irreducible
unitary representations. Using the parameter $s$ of an
irreducible representation described in the previous section,
$\Xi^{\SO}$ is canonically in bijection with $(0,1] \cup \ic
[0,+\infty)$. We will denote by $\xi_s$ the representation
corresponding to a parameter $s$.

\begin{prop}
\label{prop_extension}
Let $f_1, f_2\in H$ be invariant under $\SO$. Let us define the
Laplace transform of the correlations of $f_1,f_2$  by
  \begin{equation*}
  F(z)=F_{f_1,f_2}(z) = \int_{t=0}^\infty e^{-zt} \langle g_t \cdot f_1, f_2\rangle,
  \end{equation*}
for $\Re(z)>0$.

The function $F$ admits an holomorphic extension to $\{
\Re(z)>-1, z\not\in (-1,0]\}$. Moreover, for every $\delta>0$,
the function $F$ can be written on the half-space
$\{\Re(z)>-1+2\delta\}$ as the sum of a bounded holomorphic
function $A_\delta$, and the function
  \begin{equation*}
  B_\delta(z)=\frac{1}{\sqrt{\pi}} \int_{s\in [\delta,1]} \frac{\Gamma(s/2)}{\Gamma((s+1)/2)}
  \langle (f_1)_{\xi_s}, (f_2)_{\xi_s} \rangle \frac{\dd m(\xi_s)}{z-s+1}.
  \end{equation*}
\end{prop}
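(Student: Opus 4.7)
The plan is to reduce the statement to an explicit computation on each irreducible spherical representation via the direct integral decomposition described in Section~\ref{subsec_background_SL}, and then use the classical Harish-Chandra expansion of the $\SL$ spherical function.

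First I would decompose $H \simeq \int_\Xi H_\xi \dd m(\xi)$ and write $f_i = \int (f_i)_\xi \dd m(\xi)$. Since the $\SO$ action is fiberwise and both $f_1, f_2$ are $\SO$-invariant, each $(f_i)_\xi$ lies in $H_\xi^{\SO}$, which vanishes off the spherical locus $\Xi^{\SO}$. On a quasi-irreducible fiber $H_{\xi_s}$, the matrix coefficient of $\SO$-fixed vectors factors as $\langle g_t (f_1)_{\xi_s}, (f_2)_{\xi_s}\rangle = \phi_{\xi_s}(g_t)\,\langle (f_1)_{\xi_s}, (f_2)_{\xi_s}\rangle$ by the definition \eqref{def_spherical} of the spherical function. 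An application of Fubini (valid for $\Re z$ large) therefore reduces everything to the scalar Laplace transforms
\begin{equation*}
\Psi_s(z) \coloneqq \int_0^\infty e^{-zt}\phi_{\xi_s}(g_t)\dd t,
\qquad
F(z) = \int_{\Xi^{\SO}} \Psi_s(z)\,\langle (f_1)_{\xi_s}, (f_2)_{\xi_s}\rangle \dd m(\xi_s).
\end{equation*}

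Next I would use the explicit description of $\phi_{\xi_s}$ for $\SL$. For $s\in(0,1)$ one has the expansion $\phi_{\xi_s}(g_t)=c(s)e^{(s-1)t}+c(-s)e^{(-s-1)t}$ with $c(s)$ a multiple of $\Gamma(s/2)/\Gamma((s+1)/2)$; for $s=iv$ the Harish-Chandra basic estimate gives $|\phi_{\xi_{iv}}(g_t)|\leq C(1+t)e^{-t}$; for $s=1$ the trivial representation gives $\phi_1(g_t)=1$. Integrating, the leading exponential yields the simple pole $c(s)/(z-s+1)$ located at $z=s-1$, while the subdominant part integrates to a function holomorphic in $\Re z>-1-\Re s$, bounded uniformly for $s$ in any compact subset of $(0,1]\cup i[0,\infty)\setminus\{0\}$. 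The universal constant $1/\sqrt\pi$ of the statement comes out of the Gamma-function identities applied to $c(s)$.

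Assembling the pieces, the pole $(z-s+1)^{-1}$ with $s\in[\delta,1]$ falls in the closed set $\{\Re z\leq 0\}$, but for $s\in[\delta,1]$ it can lie in the half-space $\{\Re z>-1+2\delta\}$ (specifically when $s>2\delta$): these are precisely the contributions packaged into $B_\delta$. The part of the integral over $s\in(0,\delta)$ and over the principal series $s\in i\R$ has all poles pushed to $\{\Re z\leq -1+\delta\}$, hence is holomorphic and bounded on $\{\Re z>-1+2\delta\}$ (using the uniform bounds above together with Cauchy--Schwarz $\int |\langle (f_1)_\xi,(f_2)_\xi\rangle|\dd m(\xi)\leq \norm{f_1}\norm{f_2}$); this, together with the subdominant remainder for $s\in[\delta,1]$, forms $A_\delta$. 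Letting $\delta\to 0$ gives the holomorphic extension to $\{\Re z>-1, z\notin(-1,0]\}$.

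The main obstacle is the behaviour near $s=0$: the factor $\Gamma(s/2)\sim 2/s$ makes the integrand in the definition of $B_\delta$ blow up at the left endpoint, while the principal series and the complementary series with small $s$ produce very similar leading asymptotics. The truncation to $[\delta,1]$ cleanly isolates the finite sum of poles that sit in $\{\Re z>-1+2\delta\}$, and the missing $s\in(0,\delta)$ part is absorbed into $A_\delta$; verifying the bound on $A_\delta$ requires checking that, uniformly for $\Re z>-1+2\delta$ and $s\in(0,\delta)$, the combination $c(s)/(z-s+1)$ is integrable against the spectral data---this follows because $|z-s+1|\geq \delta$ on this region, which tames the $1/s$ divergence once one uses that $\int_{(0,\delta)}\frac{\Gamma(s/2)}{\Gamma((s+1)/2)}\dd m(\xi_s)$ is finite (it is controlled by $\norm{f_1}\norm{f_2}$ via Cauchy--Schwarz and the explicit Plancherel weight for $\SO$-invariant vectors).
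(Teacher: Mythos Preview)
Your overall architecture matches the paper's proof: decompose into irreducibles, reduce to the scalar Laplace transforms $\Psi_s$, and split the spherical parameter range into $[\delta,1]$ (giving $B_\delta$) and its complement (absorbed into $A_\delta$). The treatment of $s\in[\delta,1]$ via the Harish-Chandra expansion is also the right idea, though note that the exact identity $\phi_{\xi_s}(g_t)=c(s)e^{(s-1)t}+c(-s)e^{(-s-1)t}$ is not correct: there are series corrections (see the appendix), and what one actually needs is the estimate \eqref{harish}, namely $|\phi_{\xi_s}(g_t)-c(s)e^{(s-1)t}|\leq Ce^{-t}$ uniformly for $s\in[\delta,1]$.

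The genuine gap is in your handling of the complementary region $s\in(0,\delta)$. You propose to split $\phi_{\xi_s}$ into its two Harish-Chandra terms and bound $\int_{(0,\delta)} c(s)\,|\langle (f_1)_{\xi_s},(f_2)_{\xi_s}\rangle|\,\dd m(\xi_s)$ directly, appealing to an ``explicit Plancherel weight''. But $H$ is an \emph{arbitrary} unitary representation of $\SL$; there is no Plancherel formula constraining $m$, and since $c(s)\sim 2/(s\sqrt{\pi})$ as $s\to 0$ there is no reason for this integral to be finite. (The point is that the two diverging terms $c(s)e^{(s-1)t}$ and $c(-s)e^{(-s-1)t}$ cancel to produce a bounded $\phi_{\xi_s}$; once you separate them, each piece is no longer controlled by $\norm{f_1}\norm{f_2}$.) The paper circumvents this by \emph{not} decomposing $\phi_{\xi_s}$ on the complement of $[\delta,1]$: it instead invokes Ratner's uniform estimate \eqref{ratner}, $|\phi_\xi(g_t)|\leq Ce^{-(1-\delta)t}$ for every spherical $\xi$ with $s(\xi)\notin[\delta,1]$, with $C$ independent of $\xi$. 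This gives immediately that $\int_0^\infty e^{-zt}\phi_\xi(g_t)\dd t$ is holomorphic and bounded on $\{\Re z>-1+2\delta\}$, uniformly over this parameter range, and then Cauchy--Schwarz on $\langle (f_1)_\xi,(f_2)_\xi\rangle$ finishes the bound on $A_\delta$. You should replace your last paragraph with this argument.
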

\begin{proof}
We fix a decomposition of $H_\xi$ as an orthogonal sum
$\bigoplus_{0\leq i< n}\xi_i$, where $n=n(\xi)\in \N\cup\{+\infty\}$
is the multiplicity of $\xi$ in $H_\xi$, and $\xi_0,\dots,\xi_{n-1}$
are copies of the representation $\xi$. This decomposition is not
canonical, but it can be chosen to depend measurably on $\xi$ (see
\cite{dixmier}). If the decomposition $\xi$ is spherical, we fix in
every $\xi_j$ a vector $h(\xi,j) \in \xi_j$ of unit norm invariant
under $\SO$.

Let $f$ be a $\SO$-invariant element of $H$. For $\xi\in
\Xi^\SO$, the element $f_\xi$ of $H_\xi$ can uniquely be
decomposed as $\sum_{j<n(\xi)} \hat f(\xi,j) h(\xi,j)$, where
the coefficients $\hat f(\xi,j)\in \C$ depend measurably on
$\xi,j$.

We use this decomposition for $f_1$ and $f_2$. Let us recall that we
defined the spherical function $\phi_\xi$ of a representation $\xi$
in \eqref{def_spherical}. Since the functions $g_t\cdot h(\xi,j)$ and
$h(\xi,j')$ are orthogonal for $j\not=j'$, we have
  \begin{align*}
  \langle g_t \cdot &f_1,f_2 \rangle
  \\&
  = \int_{\Xi^\SO} \left\langle g_t \left(\sum_{j<n(\xi)} \hat f_1(\xi,j) h(\xi,j)\right),
	\left(\sum_{j'<n(\xi)} \hat f_2(\xi,j') h(\xi,'j)\right)\right\rangle \dd m(\xi)
  \\&
  = \int_{\Xi^\SO} \sum_{j<n(\xi)}\hat f_1(\xi,j) \overline{\hat f_2(\xi,j)}
	\langle g_t h(\xi,j), h(\xi,j) \rangle \dd m(\xi)
  \\&
  = \int_{\Xi^\SO} \left( \sum_{j<n(\xi)}\hat f_1(\xi,j) \overline{\hat f_2(\xi,j)}\right) \phi_\xi(g_t) \dd m(\xi)
  \\&
  = \int_{\Xi^\SO} \langle (f_1)_\xi, (f_2)_\xi \rangle \phi_\xi(g_t) \dd m(\xi).
  \end{align*}

To proceed, we will need fine asymptotics of the spherical
functions $\phi_\xi$. The first one is due to Ratner
\cite[Theorem 1]{ratner}: for all $\delta>0$, there exists a
constant $C$ such that, for any $\xi\in \Xi^\SO$ with
$s(\xi)\not\in  [\delta,1]$, and for any $t\geq 0$,
  \begin{equation}
  \label{ratner}
  |\phi_\xi(g_t)| \leq C e^{-(1-\delta)t}.
  \end{equation}
An important point in this estimate is that the constant $C$ is
uniform in $\xi$, even though $\xi$ varies in a non-compact
domain.

For representations in the complementary series, we will use a
more precise estimate, as follows. Define a function
  \begin{equation}
  \label{c(s)}
  c(s)=\frac{1}{\sqrt{\pi}}
  \frac{\Gamma(s/2)}{\Gamma((s+1)/2)},
  \end{equation}
for $s\in (0,1]$. This
function is known as Harish-Chandra's function. For all
$\delta>0$, there exists a constant $C>0$ such that, for all
$s\in [\delta,1]$ and all $t\geq 0$,
  \begin{equation}
  \label{harish}
  \left|\phi_{\xi_s} (g_t) - c(s)e^{(s-1)t}\right|
  \leq C e^{-t}.
  \end{equation}
This estimate is proved in Appendix \ref{appendix_cfunction}.

We will now conclude, using \eqref{ratner} and \eqref{harish}.
Let us decompose $\Xi^\SO$ (identified through the parameter
$s$ with a subset of $\C$) as the union of $[\delta,1]$ and its
complement. Then
  \begin{align*}
  F(z)&=\int_{\xi_s\in\Xi^\SO} \int_{t=0}^\infty e^{-zt} \langle (f_1)_{\xi_s}, (f_2)_{\xi_s} \rangle
  \phi_{\xi_s}(g_t) \dd t \dd m(\xi_s)
  \\&
  = \int_{s\in\Xi^\SO \setminus [\delta,1]} \int_{t=0}^\infty e^{-zt} \langle (f_1)_{\xi_s}, (f_2)_{\xi_s} \rangle
  \phi_{\xi_s}(g_t) \dd t \dd m(\xi_s)
  \\&\ \ \ %
  + \int_{s\in[\delta,1]} \int_{t=0}^\infty e^{-zt} \langle (f_1)_{\xi_s}, (f_2)_{\xi_s} \rangle
  (\phi_{\xi_s}(g_t)-c(s) e^{(s-1)t}) \dd t \dd m(\xi_s)
  \\&\ \ \ %
  + \int_{s\in [\delta,1]} \int_{t=0}^\infty e^{-zt} \langle (f_1)_{\xi_s}, (f_2)_{\xi_s} \rangle
  c(s) e^{(s-1)t} \dd t \dd m(\xi_s).
  \end{align*}
Let $B_\delta(z)$ be the last term in this expression, and
$A_\delta(z)$ the sum of the two other ones. In $A_\delta$, the
factors $\phi_{\xi_s}(g_t)$ and $\phi_{\xi_s}(g_t)- c(s)
e^{(s-1)t}$ are bounded, respectively, by $Ce^{-(1-\delta)t}$
and $Ce^{-t}$ (by \eqref{ratner} and \eqref{harish}).
Therefore, $A_\delta(z)$ extends to an holomorphic function on
$\{\Re(z)>-1+\delta\}$, which is bounded on the half-plane
$\{\Re(z)\geq-1+2\delta\}$. Since $\int_0^\infty e^{-at}\dd
t=1/a$ for $\Re(a)>0$, the function $B_\delta(z)$ is equal to
  \begin{equation*}
  \int_{s\in [\delta,1]} \langle (f_1)_{\xi_s}, (f_2)_{\xi_s} \rangle \frac{c(s)}{z-s+1} \dd m(\xi_s),
  \end{equation*}
for $\Re(z)>0$. This function can be holomorphically extended
to $z\not\in [-1+\delta,0]$, by the same formula. This proves
the proposition.
\end{proof}

\subsection{Proof of Theorem \ref{global_main_thm}}

We decompose the representation $H=L^2(X/\Gamma,\mu)$ of $\SL$
as a direct integral $\int_{\Xi} H_\xi \dd m(\xi)$, where the
representation $H_\xi$ is the direct sum of one or several
copies of the irreducible representation $\xi \in \Xi$. We
should prove that, for any $\delta>0$, the restriction of the
measure $m$ to $(\delta,1)$ (identified with the corresponding
set of representations in the complementary series) is made of
finitely many Dirac masses, and that at those points the
multiplicity of $\xi$ in $H_\xi$ is finite.

Let $\delta>0$ be small. Consider the eigenvalues $\lambda_i$
constructed in Proposition \ref{prop_mero_ext}. We claim that,
on the interval $(6\delta,1)$, the measure $m$ only gives mass
to the points $4\delta -1/\lambda_i + 1$, and that at such a
point the multiplicity of $\xi$ in $H_\xi$ is bounded by the
dimension of the image of the spectral projection
$\Pi_{\lambda_i}$ described in Proposition \ref{prop_mero_ext}.
This will conclude the proof of the theorem.

To proceed, we will use the fact that we have two different
expressions for the meromorphic extensions of Laplace
transforms, one related to the geometry of Teichm\"uller space
coming from Proposition \ref{prop_mero_ext}, and one given by
the abstract theory of representations of $\SL$ in Proposition
\ref{prop_extension}. Identifying these two expressions gives
the results, as follows.

\smallskip

\emph{First step: $m$ only gives weight to the points $4\delta
-1/\lambda_i+1$.} Assume by contradiction that $m$ gives positive
weight to an interval $[a,b]$ containing no such point. There exists
a function $f^{(0)}\in H$ invariant under $\SO$ such that the
corresponding components $f^{(0)}_\xi$ in $H_\xi$ satisfy
$\int_{[a,b]} \norm{ f^{(0)}_{\xi_s}}_{H_{\xi_s}}^2 \dd m(\xi_s) >
0$. Consider $\tilde f_{(n)}^{(0)} \in \boD^\Gamma$ a sequence of
smooth compactly supported functions converging to $f^{(0)}$ in $H$.
The functions $f_{(n)}^{(0)} = \int_{\theta \in \Sbb^1} k_\theta
\tilde f_{(n)}^{(0)} \dd\theta$ also belong to $\boD^\Gamma$, are
$\SO$-invariant, and converge to $f^{(0)}$ in $H$. In particular, if
$n$ is large enough, $\int_{[a,b]} \norm{
(f_{(n)}^{(0)})_{\xi_s}}_{H_{\xi_s}}^2 \dd m(\xi_s) > 0$. Let us fix
such a function $f=f_{(n)}^{(0)}$.

Consider the function $F_{f,f}(z)= \int_{t=0}^\infty e^{-zt}
\langle f, f\circ g_t\rangle \dd t$, initially defined for
$\Re(z)>0$. By Proposition \ref{prop_mero_ext}, it admits a
meromorphic extension to the domain $\boD_{\delta,\epsilon}$
for some $\epsilon>0$, with possible poles only at the points
$4\delta -1/\lambda_i$. Moreover, Proposition
\ref{prop_extension} shows that the same function can be
written, on the set $\{\Re(z)>-1+2\delta\}$, as the sum of a
bounded holomorphic function and the function
  \begin{equation*}
  B_\delta(z)=\frac{1}{\sqrt{\pi}} \int_{s\in [\delta,1]} \frac{\Gamma(s/2)}{\Gamma((s+1)/2)}
  \norm{f_{\xi_s}}^2_{H_{\xi_s}} \frac{\dd m(\xi_s)}{z-s+1}.
  \end{equation*}
It follows that this function $B_\delta$ can only have poles at
the points $4\delta -1/\lambda_i$. In particular, it is
continuous on the interval $[a-1,b-1]$. Lemma
\ref{lemchargepas} implies that the measure
$\dd\nu(s)=\frac{\Gamma(s/2)}{\Gamma((s+1)/2)}
\norm{f_{\xi_s}}^2_{H_{\xi_s}} \dd m(\xi_s)$ gives zero mass to
$[a,b]$. In particular, $\int_{[a,b]}
\norm{f_{\xi_s}}^2_{H_{\xi_s}} \dd m(\xi_s) = 0$. This is a
contradiction, and concludes the first step.

\smallskip

\emph{Second step: at a point $s=4\delta - 1/\lambda_i+1$, the
multiplicity of $\xi_s$ in $H_{\xi_s}$ is at most the dimension of
$\Ima \Pi_{\lambda_i}$ in the Banach space of Theorem
\ref{thm_control_spectral_radius_annonce}.} We argue again by
contradiction. Let $d=\dim \Ima \Pi_{\lambda_i}$, assume that the
multiplicity of $\xi_s$ in $H_{\xi_s}$ is at least $d+1$. Then one
can find in $H_{\xi_s}$ $d+1$ orthogonal functions $f^{(1)},\dots,
f^{(d+1)}$ which are $\SO$-invariant. Since $m$ has an atom at
$\xi_s$, these functions are elements of $H=L^2(X/\Gamma,\mu)$. As
above, we consider sequences $f_{(n)}^{(k)}\in \boD^\Gamma$ of
$\SO$-invariant functions that converge to $f^{(k)}$ in $H$.

Let $F_{f_{(n)}^{(k)}, f_{(n)}^{(\ell)}}(z)$ be the meromorphic
extension of the Laplace transform of the correlations of
$f_{(n)}^{(k)}$ and $f_{(n)}^{(\ell)}\circ g_t$, and let
$M^{k,\ell}_n$ denote its residue around the point $4\delta
-1/\lambda_i$. For each $n$, the residue $M^{k,\ell}_n$ is described
by Proposition \ref{prop_mero_ext}. Since the operator
$\Pi_{\lambda_i}$ has a $d$-dimensional image, it follows that the
rank of the matrix $M_n$ is at most $d$. On the other hand,
Proposition \ref{prop_extension} shows that $M^{k,\ell}_n = C \langle
(f_{(n)}^{(k)})_{\xi_s}, (f_{(n)}^{(\ell)})_{\xi_s} \rangle$ (where
$C>0$ depends only on $s$ and $m$). When $n$ tends to infinity, the
functions $f_{(n)}^{(k)}$ converge to $f^{(k)}$, hence $M_n$
converges to a diagonal matrix. In particular, $M_n$ is of rank $d+1$
for large enough $n$, a contradiction. \qed

\section{Measures with a local product structure on \texorpdfstring{$\Teich_1$}{Teich1}}
\label{section_local_product}

To construct the Banach space of
Theorem~\ref{thm_control_spectral_radius_annonce}, we will need
more geometric information on admissible measures, given by the
following proposition.

\begin{prop}
\label{prop_local_product}
Let $\tilde\mu$ be an admissible measure, supported on a
submanifold $X$ of $\Teich_1$. Then
\begin{enumerate}
\item
For every $x\in X$, there is a decomposition of the tangent
space $\boT_x X = \R\omega(x) \oplus E^u(x) \oplus E^s(x)$,
where $\omega(x)$ is the direction of the $g_t$-flow,
  \begin{equation*}
  E^u(x)=\boT_x X \cap D
  \Phi(x)^{-1}(H^1(M,\Sigma;\R)),
  \quad
  E^s(x)=\boT_x X \cap
  D\Phi(x)^{-1}(H^1(M,\Sigma;\ic\R)).
  \end{equation*}
\item The subspaces $E^s(x)$ and $E^u(x)$ depend in a
$C^\infty$ way on $x\in X$, are integrable, and the
integral leaves $W^u(x), W^s(x)$ are affine submanifolds of
$\Teich$.
\item
\label{pointmuu}
For every $x\in X$, there is a volume form $\mu_u$ on
$E^u(x)$ (defined up to sign), such that $x\mapsto
\mu_u(x)$ is $C^\infty$. Moreover, $x\mapsto \mu_u(x)$ is
constant along the unstable manifolds $W^u$. Additionally,
there exists a scalar $d\geq 0$ such that $(g_t)_*
\mu_u=e^{-dt}\mu_u$.
\item
For every $x\in X$, there is a volume form $\mu_s$ on
$E^s(x)$ (defined up to sign), such that $x\mapsto
\mu_s(x)$ is $C^\infty$. Moreover, $x\mapsto \mu_s(x)$ is
constant along the stable manifolds $W^s$. Additionally,
$(g_t)_* \mu_s=e^{dt}\mu_s$.
\item
For every $x\in X$, the volume form $\de\tilde\mu(x)$ on
$\boT_x X$ is equal to the product of $\dLeb$, $\mu_u(x)$
and $\mu_s(x)$ respectively in the directions $\omega(x)$,
$E^u(x)$ and $E^s(x)$.
\end{enumerate}
All these data are $\Gamma$-equivariant. We say that the
decomposition $\dd\tilde\mu=\dLeb \otimes \dd\mu_u \otimes
\dd\mu_s$ is the affine local product structure of $\mu$.
\end{prop}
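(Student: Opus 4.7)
The plan is to reduce all five statements to linear algebra in period coordinates, exploiting the affine structure built into admissibility. From the definition, $X=Y\cap\Teich_1$ for a $\Gamma$-invariant linear submanifold $Y\subset\Teich$, and the identification $Y\setminus\{0\}\simeq X\times\R_+^*$ via period scaling, together with $\SL$-invariance of $\tilde\mu$, forces $Y$ to be $\GL$-invariant. Hence $W\coloneqq D\Phi(\boT_xY)$ is a fixed $\SL$-invariant subspace of $H^1(M,\Sigma;\C)$; since $k_{\pi/2}$ acts as multiplication by $\ic$, it splits as $W=W_\R\oplus\ic W_\R$ with $W_\R\coloneqq W\cap H^1(M,\Sigma;\R)$, and the area form becomes $A(a+\ic b)=\langle a,b\rangle$ for a natural antisymmetric pairing on $W_\R$ descended from the cup product on $H^1(M;\R)$.

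In period coordinates $(a,b)\in W_\R\oplus W_\R$, parts (1)--(2) are immediate: the tangent $\boT_xX=\ker dA$ contains the flow direction $\omega(x)=(a,-b)$, the purely real piece $E^u(x)=\{(u,0):\langle u,b\rangle=0\}$, and the purely imaginary piece $E^s(x)=\{(0,v):\langle a,v\rangle=0\}$, which are linearly independent with dimensions summing to $\dim X$. Smoothness is automatic since these subspaces depend smoothly on the period coordinates; integrability and affineness of the leaves follow because moving along $(u,0)\in E^u(x)$ keeps both $b$ and the area invariant (using $\langle u,b\rangle=0$ so that $\langle a+tu,b\rangle=\langle a,b\rangle$), hence the integral curve is a straight affine segment of $Y$ contained in $X$, with the symmetric argument handling $E^s$.

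For (3)--(4), fix a translation-invariant top form $\Omega_\R$ on $W_\R$ and define $\mu_u(x)$ by disintegrating $\Omega_\R$ along the linear functional $\ell_b\colon u\mapsto\langle u,b\rangle$ whose kernel is $E^u(x)$. This is $C^\infty$ in $x$ and manifestly constant along $W^u(x)$ since $b$ is locally constant there. Under $g_t$ the imaginary part rescales to $e^{-t}b$, so $\ell_b$ rescales by $e^{-t}$ and the disintegration yields $\mu_u(g_tx)=e^t\mu_u(x)$; combined with $Dg_t=e^t\cdot\mathrm{id}$ on $E^u$, a one-line change-of-variables computation gives $(g_t)_*\mu_u=e^{-dt}\mu_u$ with $d=\dim E^u+1$. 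The construction of $\mu_s$ is entirely symmetric (replacing $\ell_b$ by $\ell_a$) and gives the same $d$ with the opposite scaling. $\Gamma$-equivariance of every object is automatic from the $\Gamma$-invariance of $Y$, $\Phi$ and $A$.

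The product decomposition in (5) comes from matching two disintegrations of the affine Lebesgue $\Omega_\R\otimes\Omega_\R$ on $W$: on one hand along the pair $(\ell_b,\ell_a)$, which produces $\mu_u\otimes\mu_s$ tensored with a two-dimensional transverse factor spanned by the coordinate directions corresponding to $a$ and $b$; on the other hand along the Euler (scale) direction generating the cone $Y\simeq X\times\R_+^*$ together with the flow direction $\omega$. These two pairs span the same transverse plane and differ by a change of basis with constant Jacobian, so quotienting out the Euler factor in the passage from $Y$ to $X$ leaves $\dLeb$ along $\omega$ and produces $\dd\tilde\mu=\dLeb\otimes\mu_u\otimes\mu_s$, up to a single global constant that is absorbed into the initial choice of $\Omega_\R$. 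The principal technical obstacle is precisely this last bookkeeping step: carefully matching the algebraic $(a,b)$-splitting of $W$ with the dynamical $(\omega,E^u,E^s)$-splitting of $\boT_xX$ together with the transverse Euler direction, tracking constants and Jacobians, and verifying that no spurious non-constant factor appears.
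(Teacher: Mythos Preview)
Your argument is correct and takes a genuinely different, more direct route than the paper's.  The paper proves Proposition~\ref{prop_local_product} \emph{simultaneously} with Proposition~\ref{prop_smooth_measure_is_projective}, so it deliberately starts from the weaker hypotheses of the latter (a $C^1$ submanifold carrying an invariant absolutely continuous Radon measure) and works its way up through nine dynamical steps: finiteness of the mass via Athreya recurrence, the tangent splitting $\boT_xX=\R\omega\oplus E^u\oplus E^s$ via Poincar\'e recurrence and the non-uniform hyperbolicity of Proposition~\ref{pliss_hyperbolicity}, the identity $\tilde E^s=\ic\tilde E^u$ from $\SO$-invariance, linearity of $Y$ via a holomorphicity argument, constancy of the density of $\tilde\mu_Y$ via a Hopf argument, and only then a canonical construction of $\mu_u,\mu_s$ from $\tilde\mu_Y$ using the $\ic$-twist.

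You instead exploit the full strength of Definition~\ref{def_proj_measure}: admissibility already gives you that $Y$ is linear and that $\tilde\mu\otimes\Leb$ is linear Lebesgue on it, so everything collapses to explicit linear algebra on the fixed complex subspace $W=W_\R\oplus\ic W_\R$.  Your disintegration of a reference volume $\Omega_\R$ along $\ell_b=\langle\cdot,b\rangle$ is precisely the interior-product construction of Example~\ref{ex_total} (there for $X=\Teich_1$), generalized verbatim to an arbitrary $W_\R$; your scaling computation yielding $d=\dim E^u+1$ matches Remark~\ref{identify_d}.  What you gain is brevity and explicitness for Proposition~\ref{prop_local_product} alone; what the paper gains is Proposition~\ref{prop_smooth_measure_is_projective} as a byproduct, together with a construction of $\mu_u,\mu_s$ that is canonical in terms of $\tilde\mu$ rather than depending on an auxiliary $\Omega_\R$ to be normalized at the end.

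Two small points worth making explicit in your write-up.  First, the Jacobian in your part~(5) bookkeeping is indeed a nonzero constant: the two transverse frames $\{(a,0),(0,b)\}$ and $\{(a,b),(a,-b)\}$ differ by the matrix $\smat 1 1 1 {-1}$ of determinant $-2$, and one checks $d\ell_b\wedge d\ell_a=-2\,dr\wedge\dLeb$ on that plane, so no $x$-dependent factor survives and the normalization of $\Omega_\R$ really does absorb everything.  Second, $\Gamma$-equivariance of your $\mu_u$ up to sign needs $\det(\gamma|_{W_\R})=\pm1$; this follows because $\tilde\mu_Y$ is $\Gamma$-invariant and is a fixed multiple of $\Omega_\R\otimes\Omega_\R$, forcing $(\det\gamma|_{W_\R})^2=1$.
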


Note that, since $W^u(x)$ is an affine submanifold, the tangent
spaces of $W^u(x)$ at two different points $y_1,y_2\in W^u(x)$
are canonically identified (i.e., their images under
$D\Phi(y_1)$ and $D\Phi(y_2)$ coincide), hence it is meaningful
to say in item \ref{pointmuu} of the above definition that
$y\mapsto \mu_u(y)$ is constant along $W^u(x)$. The same holds
for $\mu_s$ along $W^s$.

Note also that $E^u$ and $E^s$ are really the strong stable and
unstable manifolds. Indeed, $D \Phi(x)^{-1}(H^1(M,\Sigma;\R))$
is the weak unstable manifold for the flow on $\Teich$, but
since we are restricting to $\boT_x X$ we are excluding the
neutral directions (see the example of area-one surfaces
below).

If $x=a+\ic b$ in the chart $\Phi$, then for small $r$ we have
$h_r(x)=a+rb+\ic b$. In particular, the tangent vector to this
curve is always $b\in H^1(M,\Sigma;\R)$, hence $h_r(x)$ is in
the unstable manifold $W^u(x)$. Moreover, the differential of
$h_r$ sends $E^u(x)$ to $E^u(h_r x)$, and it is equal to the
identity in the chart $\Phi$. In particular, since $\mu_u$ is
constant along $W^u(x)$, this implies that $h_r$ leaves $\mu_u$
invariant, i.e., $(h_r)_* \mu_u=\mu_u$.

The family of volume forms $\mu_u(x)$ on $E^u(x)$ induces a positive
measure on each leaf $W^u$ of the unstable foliation, that we also
denote by $\mu_u$. In the same way, we get a measure $\mu_s$ on each
stable manifolds. Let us note that, although the volume forms
$\mu_u(x)$ are only defined up to sign, the induced positive measures
$\mu_u$ are canonical. If the manifolds $W^u$ and $W^s$ were
canonically oriented (or at least had a $\Gamma$ invariant
orientation), then $\mu_u(x)$ and $\mu_s(x)$ themselves would not be
defined only up to sign, but we do not know if this is always the
case.

The scalar $d$ in the above proposition can be identified, see
Remark \ref{identify_d}.

See \cite{babillot_ledrappier} for the notion of local product
structure in more complicated non-smooth settings.

\begin{example}
\label{ex_total}
Consider in $\Teich$ the subset $X=\Teich_1$ of area one surfaces,
with its canonical invariant Lebesgue measure $\tilde\mu$. We will
describe its affine local product structure. A similar construction
is given in \cite[Section 2]{athreya_et_al}, in more geometric terms.

First, assume $x\in X$ and $\Phi(x)=a+\ic b$. Around $x$, we
identify $\Teich$ and $H^1(M,\Sigma;\C)$ using $\Phi$. Then the
area of $a+a'+\ic b$ is $1+[a',b]$, where $[a',b]$ is the
intersection product of $a'$ and $b$ (this is initially defined
for elements of $H^1(M;\R)$, but since $H^1(M,\Sigma;\R)$
projects to $H^1(M;\R)$ it extends trivially to
$H^1(M,\Sigma;\R)$). Therefore, $E^u(x)=\{a' \in
H^1(M,\Sigma;\R) \st [a',b]=0\}$. This depends smoothly on $x$,
and the integral leaves of this distribution are locally the
sets $\{(a+a',b) \st [a',b]=0\}$. These are indeed affine
submanifolds of $\Teich$.

Let us now define $\mu_u$ at the point $a+\ic b$. The set
$H^1(M,\Sigma;\R)$ is endowed with a canonical volume form
$\vol$ (giving covolume $1$ to $H^1(M,\Sigma;\Z)$), we let
$\mu_u$ be the interior product of $a$ and $\vol$, i.e., if
$v_1,\dots,v_k$ is a basis of $E^u(x)$, then $\mu_u(x)=\vol(a,
v_1,\dots,v_k)$. At a nearby point $x'=a+a'+\ic b$ on the same
unstable manifold,
$\mu_u(x')(v_1,\dots,v_k)=\vol(a+a',v_1,\dots,v_k)=\vol(a,v_1,\dots,v_k)$
since $a'$ belongs to $E^u(x)$. Therefore, $\mu_u(x)=\mu_u(x')$
as claimed.

Let $d=k+1$ be the dimension of $H^1(M,\Sigma;\R)$. The
differential of $g_t$, mapping $E^u(x)$ to $E^u(g_t x)$, is
simply the multiplication by $e^t$, therefore $(g_t)_*
\mu_u(x)=e^{-(d-1)t}\mu_u(x)$. Since $\mu_u(g_t x) =
e^t\mu_u(x)$, we get $(g_t)_* \mu_u(x) = e^{-dt}\mu_u(g_t x)$.

In  the same way, we define a volume form $\mu_s(x)$ on
$E^s(x)$. It satisfies $(g_t)_* \mu_s=e^{dt}\mu_s$.

Let us finally define a volume form $\tilde\mu'$ on $\boT_x X$ as the
product of Lebesgue in the flow direction, $\mu_u$ and $\mu_s$. It
satisfies $(g_t)_* \tilde\mu'=\tilde\mu'$, since the factors
$e^{-dt}$ and $e^{dt}$ (coming respectively from $\mu_u$ and $\mu_s$)
cancel out.

All those data are intrinsically defined, and therefore
$\Gamma$-invariant. By ergodicity of $\mu$ in the quotient
$X/\Gamma$, we have $\tilde\mu'=c\tilde\mu$ for some $c\in
(0,+\infty)$.
\end{example}

We will prove simultaneously
Proposition~\ref{prop_local_product} (the fact that an
admissible measure has a local product structure) and
Proposition~\ref{prop_smooth_measure_is_projective} (the fact
that an absolutely continuous measure on a smooth submanifold
is automatically admissible): indeed, we will start from an
absolutely continuous measure and prove simultaneously that it
is admissible and that it has an affine local product
structure. For this proof, we will use the non-uniform
hyperbolicity of the Teichm\"uller flow. This property is
well-known, but we will need it later on in the following
precise form. Let us fix on $\Teich$ a $\Gamma$-invariant
Finsler metric. In later arguments, we will use a specific
metric which is well behaved at infinity (constructed in
Subsection~\ref{subsec_Finsler}), but the following statement
is valid for any metric.

\begin{prop}
\label{pliss_hyperbolicity}
For any set $K\subset \Teich_1$ which is compact mod $\Gamma$, there
exists $T=T(K)$ such that, for any point $x\in K$ and any time $t$
such that $g_t x\in K$ and
  \begin{equation*}
  \Leb \{ s\in [0,t] \st g_s(x)\in K\} \geq T,
  \end{equation*}
then $\norm{Dg_t(x) v}_{g_t x} \leq \norm{v}_x /2$ for any $v\in
E^s(x)$, and $\norm{Dg_t(x) v}_{g_t x} \geq 2\norm{v}_x $ for any
$v\in E^u(x)$.
\end{prop}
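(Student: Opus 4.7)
The plan is to reduce everything to the explicit linear action of the Teichm\"uller flow in period coordinates, where hyperbolicity is exact, and to compare this to the Finsler norm using compactness mod $\Gamma$.

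First I would exploit the affine structure. By the characterization of $E^u(x),E^s(x)$ in Proposition~\ref{prop_local_product}, the period map sends $E^u(x)$ into $H^1(M,\Sigma;\R)$ and $E^s(x)$ into $H^1(M,\Sigma;\ic\R)$. Since $g_t$ acts on $\Teich$ as the $\GL$-element $\smat{e^t}{0}{0}{e^{-t}}$, it acts on $H^1(M,\Sigma;\C)=H^1(M,\Sigma;\R)\oplus \ic H^1(M,\Sigma;\R)$ by $\diag(e^t,e^{-t})$. Consequently
\[
D\Phi(g_t x)\circ Dg_t(x)=\diag(e^t,e^{-t})\circ D\Phi(x),
\]
so if we fix any reference norm $|\cdot|$ on $H^1(M,\Sigma;\C)$ and define the period norm $|v|_{\Phi,y}:=|D\Phi(y)v|$, then $|Dg_t(x)v|_{\Phi,g_tx}=e^{-t}|v|_{\Phi,x}$ for $v\in E^s(x)$ and $=e^{t}|v|_{\Phi,x}$ for $v\in E^u(x)$. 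This is an exact identity, independent of where the orbit goes.

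Second, I would compare the $\Gamma$-invariant Finsler norm to the period norm on $K$. Since $K$ is compact mod $\Gamma$, pick a compact $\tilde K\subset \Teich_1$ surjecting onto $K/\Gamma$. Both $\norm{\cdot}_y$ and $|\cdot|_{\Phi,y}$ are continuous, positive and finite on the unit sphere bundle of $E^u\oplus E^s$ over $\tilde K$, hence there exists $C_K\geq 1$ with $C_K^{-1}|v|_{\Phi,y}\leq \norm{v}_y\leq C_K|v|_{\Phi,y}$ for $y\in \tilde K$ and $v\in E^u(y)\cup E^s(y)$. The $\Gamma$-invariance of $\norm{\cdot}$ together with the $\Gamma$-equivariance of $\Phi$ lets one transport these bounds to lifts in $\Gamma\tilde K$ by working at the representative in $\tilde K$: given $x,g_tx\in K$, choose $\gamma_1,\gamma_2\in\Gamma$ with $\gamma_1 x,\gamma_2 g_tx\in\tilde K$, apply the comparison at each representative, and use $\norm{Dg_tv}_{g_tx}=\norm{D\gamma_2 Dg_tv}_{\gamma_2 g_tx}$.

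Combining the two steps, for $x$ and $g_tx$ lifting points of $K$ and $v\in E^s(x)$,
\[
\norm{Dg_t(x)v}_{g_tx}\leq C_K|Dg_t(x)v|_{\Phi,g_tx}=C_K e^{-t}|v|_{\Phi,x}\leq C_K^{2} e^{-t}\norm{v}_x,
\]
(with the analogous lower bound on $E^u$), provided the $\Gamma$-equivariance transfer in Step 2 is done in a way that keeps the same reference norm on both sides — this is the point where care is needed. Setting $T(K):=\log(2C_K^{2})$, the assumption $\Leb\{s\in[0,t]\st g_s x\in K\}\geq T$ forces in particular $t\geq T$, whence $\norm{Dg_t(x)v}_{g_tx}\leq \frac12\norm{v}_x$ and symmetrically $\norm{Dg_t(x)v}_{g_tx}\geq 2\norm{v}_x$ for $v\in E^u$.

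The main obstacle is the $\Gamma$-equivariance issue in the second step: no norm on $H^1(M,\Sigma;\C)$ is $\Gamma$-invariant, so the ratio $\norm{v}_y/|v|_{\Phi,y}$ is not a function on $X/\Gamma$, and one cannot na\"{\i}vely "take the sup over $K$". The hypothesis on the Lebesgue measure of $\{s\st g_sx\in K\}$ (stronger than merely $t\geq T$) is what is actually needed to overcome this: if the orbit spends long stretches outside $K$, it may traverse many $\Gamma$-translates of $\tilde K$, and the composition of $\Gamma$-elements appearing at successive reentries into $\tilde K$ could introduce an unbounded distortion $|\gamma w|/|w|$. One must argue that each excursion of duration $\tau$ outside $K$ contributes at most a bounded factor (controlled by $\tau$) to this distortion, so that a sufficiently large time spent \emph{inside} $K$ dominates the accumulated distortion via the exact $e^{-t}$ period-coordinate contraction. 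This decomposition of the orbit into "in-$K$" and "excursion" intervals, together with a quantitative bound on the $\Gamma$-element produced by each excursion, is the technical heart of the argument.
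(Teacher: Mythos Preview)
Your approach has a genuine gap that your proposed fix does not repair.

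You correctly identify that no fixed norm $|\cdot|$ on $H^1(M,\Sigma;\C)$ is $\Gamma$-invariant, so the ratio $\norm{v}_y/|v|_{\Phi,y}$ is not uniformly bounded over a set $K$ that is only compact mod $\Gamma$. But you misdiagnose the source of the distortion: it is \emph{not} caused by excursions outside $K$, but by the monodromy of the flat bundle $H^1(M,\Sigma;\C)$ over $\Teich_1/\Gamma$ along the flow --- the Kontsevich--Zorich cocycle --- which is present for every orbit segment, including those lying entirely in $K/\Gamma$. Concretely, if $x\in\tilde K$ and $g_t x\in\gamma\tilde K$ with $\gamma\neq\id$ (as happens already for a periodic orbit contained in $K/\Gamma$, with no excursion at all), carrying out your comparison at each endpoint gives, for $v\in E^s(x)$ and $w=D\Phi(x)v$,
\[
\norm{Dg_t(x) v}_{g_t x}\ \leq\ C_K^2\, e^{-t}\,\frac{|(\gamma^{-1})_* w|}{|w|}\,\norm{v}_x,
\]
and the extra factor $|(\gamma^{-1})_* w|/|w|$ is precisely the KZ cocycle over that segment, unbounded as $t\to\infty$. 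Were your displayed bound $\norm{Dg_t v}_{g_t x}\leq C_K^2 e^{-t}\norm{v}_x$ correct, every Lyapunov exponent of $g_t$ along $E^s$ would be at most $-1$, contradicting the known spectrum (for genus $\geq 2$ there are exponents strictly between $-1$ and $0$). This is also why your argument needed only $t\geq T$ rather than the stronger hypothesis on time spent in $K$: that discrepancy was the sign that the KZ contribution had been dropped.

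The paper's proof is accordingly not self-contained: it quotes Forni's uniform hyperbolicity on compact subsets of moduli space with respect to the \emph{Hodge} norm on $H^1(M;\C)$, which is $\Gamma$-invariant and contracts along $E^s$ at a positive instantaneous rate on compact sets (so total time $\geq T$ in $K$ yields contraction $\leq e^{-cT}$). The relative kernel $\ker(H^1(M,\Sigma;\C)\to H^1(M;\C))$ is handled separately, since there the monodromy acts through permutations of $\Sigma$ and a permutation-invariant norm is $\Gamma$-invariant --- on that piece your $e^{\pm t}$ argument does work. Finally one compares the Hodge and Finsler norms on $K/\Gamma$, which is legitimate because both are $\Gamma$-invariant.
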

\begin{proof}
The uniform hyperbolicity of the Teichm\"uller flow in compact
subsets of $\Teich_1/\Gamma$ has been proved by Forni in
\cite[Lemma 2.1']{forni_deviation}, for a different norm, the
Hodge norm (and for vectors belonging to $H^1(M;\C)$ instead of
$H^1(M,\Sigma;\C)$). To obtain the result for the norm under
study, it is sufficient to use the following two facts:
\begin{enumerate}
\item Vectors in $H^1(M,\Sigma;\C)$ that vanish in $H^1(M;\C)$
are expanded at a constant rate $e^t$ in the unstable
direction, and contracted at a constant rate $e^{-t}$ in
the stable direction.
\item In a fixed compact subset of $\Teich_1/\Gamma$, any two continuous norms
are equivalent. \qedhere
\end{enumerate}
\end{proof}

\begin{proof}[Proof of
Propositions~\ref{prop_local_product} and
\ref{prop_smooth_measure_is_projective}]
Let us fix a measure $\tilde\mu$ as in the assumptions of
Proposition~\ref{prop_smooth_measure_is_projective}: it is
supported on a $C^1$ submanifold $X$ of $\Teich_1$, equivalent
to Lebesgue measure on $X$, and induces a Radon measure $\mu$
in $X/\Gamma$. We will prove that $\tilde\mu$ is admissible and
that it has an affine local product structure.

For $x\in \Teich_1$, denote by $\pi_\omega$, $\pi_u$ and
$\pi_s$ the projections respectively on the flow, unstable and
stable direction in the tangent space $\boT_x \Teich_1$.

\smallskip

\emph{First step: the measure $\mu$ has finite mass. In
particular, the flow $g_t$ is conservative in the measure space
$(X/\Gamma, \mu)$.}

Since $\mu$ is $\SL$-invariant, Athreya's Theorem
\cite{athreya} shows the existence of a compact set $K$ in
$X/\Gamma$ such that, under the iteration of $g_t$,
$\mu$-almost every point spends asymptotically at least half
its time in $K$. It follows from Hopf's ergodic theorem applied
to the ergodic components of $\mu$ that $\mu(X/\Gamma) \leq
2\mu(K)$. Since $\mu$ is a Radon measure, this quantity is
finite and the result follows.

\smallskip

\emph{Second step: at every point $x\in X$, we have
  \begin{equation}
  \label{good_decomposition}
  \boT_x X = \pi_\omega(\boT_x X) \oplus \pi_u(\boT_x X) \oplus
  \pi_s(\boT_x X).
  \end{equation}}
We will prove this property for $x$ in a dense subset of $X$, since
the general case follows by a limiting argument (using the
compactness of Grassmannians). The dimensions of $\pi_u(\boT_x X)$
and $\pi_s(\boT_x X)$ are semi-continuous, hence they are locally
constant on a dense subset of $X$. Moreover, since the flow $g_t$ is
conservative and $\mu$ has full support, Poincar\'e's recurrence
theorem ensures that almost every point of $X$ comes back close to
itself in the quotient $X/\Gamma$ infinitely often in forward and
backward time. We will prove \eqref{good_decomposition} for such a
point $x$.

First, since $g_t(x)\in X$ for all $t\geq 0$, we have
$\omega(x)= \partial g_t(x) / \partial t |_{t=0} \in \boT_x X$.
It is therefore sufficient to check that $\pi_u(\boT_x X)
\subset \boT_x X$ and $\pi_s(\boT_x X) \subset \boT_x X$. By
symmetry, it is even sufficient to prove the first inclusion.

Since the dimension of $\pi_u(\boT_y X)$ is locally constant
around $x$, there exists a constant $C$ such that, for any $y$
close to $x$, any vector $w_u\in \pi_u(\boT_y X)$ admits a lift
$w$ to $\boT_y X$ with $\norm{w}\leq C\norm{w_u}$.

Consider $v\in \boT_x X$, and write it as
$v=v_\omega+v_u+v_s\in \pi_\omega(\boT_x X) \oplus \pi_u(\boT_x
X) \oplus \pi_s(\boT_x X)$. We should prove that $v_u \in
\boT_x X$. Let $\epsilon>0$. Consider $t$ very large such that
$y=g_{-t}x$ is close to $x$. By Proposition
\ref{pliss_hyperbolicity}, if $t$ is large enough, the norm of
$w_u\coloneqq Dg_{-t}(x)\cdot v_u$ is bounded by $\epsilon$. We
may therefore find $w\in \boT_y X$ with $\pi_u(w)=w_u$, and
$\norm{w}\leq C\epsilon$. Write $w=w_\omega+w_u+w_s$. Then
$Dg_t(y)w \in \boT_x X$, and this vector can be written as
$Dg_t(y)(w_\omega+w_s) + v_u$ where
$\norm{Dg_t(y)(w_\omega+w_s)} \leq C\epsilon$. We have proved
that $v_u$ is a limit of points of $\boT_x X$, and therefore
that $v_u\in \boT_x X$. This concludes the proof of the second
step.

\smallskip

We can therefore define spaces $E^u(x)=\pi_u(\boT_x X) = \boT_x
X \cap \Phi^{-1}(H^1(M,\Sigma;\R))$ and $E^s(x)= \pi_s(\boT_x
X) = \boT_x X \cap \Phi^{-1}(H^1(M,\Sigma; \ic \R))$ such that
$\boT_x X=\R\omega \oplus E^u(x)\oplus E^s(x)$. Moreover, the
dimensions $d_u$ and $d_s$ of those spaces are locally
constant. Since the space $X/\Gamma$ is connected, they are in
fact constant. Finally, since the rotation $k_{\pi/2}$ maps
$E^u(x)$ to $E^s(k_{\pi/2}x)$, we have $d_u=d_s$.

Let $Y=\R_+^* X$. To simplify notations, we will omit $\Phi$
and identify locally $Y$ with a subset of $H^1(M,\Sigma;\C)$.
Since the tangent vector to the map $t\mapsto tx$ at $x=a+\ic
b$ is $a+\ic b$, we have $\boT_x Y=\R(a+\ic b) + \boT_x X$.
With the decomposition of $\boT_x X$ given at the first step
and the equality $\omega(x)=a-\ic b$, we obtain $\boT_x Y=
\tilde E^u(x) \oplus \tilde E^s(x)$, where $\tilde
E^u(x)=\boT_x Y\cap H^1(M,\Sigma;\R)=\R a \oplus E^u(x)$, and
$\tilde E^s(x)=\boT_x Y\cap H^1(M,\Sigma;\ic \R)=\ic\R b \oplus
E^s(x)$.

\smallskip

\emph{Third step: for any $x\in Y$, we have $\tilde E^s(x)= \ic
\tilde E^u(x)$.}

Let $e\in \tilde E^u(x)$ and $\ic f\in \tilde E^s(x)$. For
small $\theta$, the rotated vector $k_\theta(e+\ic f)=(\cos
(\theta) e+ \sin (\theta) f)+\ic (-\sin (\theta) e+\cos(\theta)
f)$ belongs to $\boT_{k_\theta x} Y$. Taking $f=0$ and
projecting to the real component, we deduce that $\tilde
E^u(k_\theta x)$ contains $\tilde E^u(x)$. Since they have the
same dimension, it follows that $\tilde E^u(k_\theta x)= \tilde
E^u(x)$. In the same way, taking $e=0$, we get $\tilde
E^u(k_\theta x)= \ic^{-1} \tilde E^s(x)$. Finally, $\tilde
E^s(x)=\ic \tilde E^u(x)$, as desired.

\smallskip

\emph{Fourth step: $Y$ is an affine submanifold of $\Teich$.}

At every point $x\in Y$, the tangent space $T_x Y = \tilde
E^u(x)\oplus \tilde E^s(x)$ is invariant by complex
multiplication, by the third step. This implies that $Y$ is a
complex (holomorphic) submanifold of $\Teich$, see
e.g.~\cite[Proposition 1.3.14]{CR_book}.


Let us show that $Y$ is linear around any point $x_0\in Y$ (we
thank S.~Cantat for the following argument). Working in charts
and changing coordinates, we can assume that $x_0=0$ and that
$\boT_{0} Y = \C^k \subset \C^N$, for $k=d_s+1=d_u+1$. Around
$0$, the manifold $Y$ can therefore be written as a graph
$\{(z,f(z))\}$ for some holomorphic function from $\C^k$ to
$\C^{N-k}$. At a point $x$ close to $0$, the tangent space
$\boT_x Y$ is $\{(v, Df(x)v) \st v\in \C^k\}$. In particular,
the real part of this tangent space is included in $\{(v,
Df(x)v)\st v\in \R^k\}$. Since the dimension of the real part
of $\boT_x Y$ is exactly $k$, it follows that $Df(x)v$ is real
for any real vector $v$, i.e., all the matrix coefficients of
$Df(x)$ are real. Since a real valued holomorphic function is
constant, $Df$ is constant. Therefore, $Y$ is linear around
$x_0$.

\smallskip

\emph{Fifth step: the distributions of $d_u$-dimensional subspaces
$E^u$ and $E^s$ are integrable, and the integral leaves are affine
submanifolds of $\Teich$.}

The strong unstable manifolds form a foliation $\boF$ of $\Teich_1$
with affine leaves (see Example~\ref{ex_total}). Moreover, the
dimension of $T_x \boF \cap T_x X$ is independent of $x\in X$, by the
second step. It follows that the restriction of $\boF$ to $X$ defines
a foliation of $X$, integrating the distribution of subspaces $T_x
\boF \cap T_x X = E^u(x)$. In particular, the leaf $W^u(x)$
integrating $E^u(x)$ is locally given by $X \cap \boF_x$, which is
also equal to $Y\cap \boF_x$. Since $Y$ is affine by the fourth step
and $\boF_x$ is affine, $W^u(x)$ is also affine. The argument is the
same for $W^s$.

\smallskip

\emph{Sixth step: the measure $\tilde\mu_Y = \tilde\mu\otimes
\Leb$ on $Y$ is locally a multiple of the linear Lebesgue
measure on the linear manifold $Y$.}

Given $x\in Y$, fix a reference Lebesgue measure on $Y$ around
$x$ (there is a priori no canonical choice of normalization),
and denote by $\tilde\phi$ the density of $\tilde\mu_Y$ with
respect to this Lebesgue measure. We will prove that
$\tilde\phi$ is constant on strong stable and unstable
manifolds in a neighborhood of $x$. Since the foliations $W^s$
and $W^u$ are smooth and jointly non-integrable, it follows
from the classical Hopf argument that $\tilde\phi$ is constant.
We will work in $Y/\Gamma$, around the point $x\Gamma$. Let us
denote by $\phi$ the density of $\mu$ around $x\Gamma$.

Since $\mu$ has finite mass, we can consider a sequence of
compactly supported smooth measures $\mu_n$ converging (for the
total mass norm) to $\mu$ on $X/\Gamma$. For any $t\geq 0$,
  \begin{equation*}
  |(g_t)_* \mu_n - \mu| = |(g_t)_*\mu_n - (g_t)_*\mu|
  =|\mu_n - \mu|.
  \end{equation*}
Therefore, for any sequence $t_n$, the measures
$(g_{t_n})_*\mu_n$ converge to $\mu$.

Fix $M>0$. Let $\phi_{n,t}$ denote the density of $(g_t)_*(\mu_n
\otimes\Leb)$ in a ball $B$ around $x\Gamma$. Then, for any $n\in \N$
and any $M>0$, the integral
  \begin{equation*}
  \int_{y\in B} \int_{z\in W^u(y)\cap B}
  \min(|\phi_{n,t}(z)-\phi_{n,t}(y)|,M)\dLeb(z) \dLeb(y)
  \end{equation*}
converges to $0$ when $t$ tends to $+\infty$. Indeed, the
integrand is bounded by $M$, and converges almost everywhere to
$0$ since the flow is hyperbolic along almost every trajectory
and the measure $\mu_n$ is smooth. Let us choose $t_n$ such
that this integral is at most $2^{-n}$. Since
$(g_{t_n})_*\mu_n$ converges to $\mu$, the density
$\phi_{n,t_n}$ converges almost everywhere to $\phi$ along a
subsequence. This yields
  \begin{equation*}
  \int_{y\in B} \int_{z\in W^u(y)\cap B}
  \min(|\phi(z)-\phi(y)|,M)\dLeb(z) \dLeb(y) = 0.
  \end{equation*}
Letting $M$ tend to infinity, we obtain that $\phi$ is almost
everywhere constant along unstable manifolds in $B$, as desired.

\smallskip

\emph{Seventh step: the measure $\mu$ is ergodic.}

If $\mu$ is not ergodic, we can consider an invariant set $A$
for the action of $\SL$ on $X/\Gamma$, with positive but not
total measure. Consider $\tilde\nu$ the restriction of
$\tilde\mu$ to the lift of $A$ in $\Teich_1$. The argument in
the previous step applies to $\tilde\nu$ and shows that the
density of $\tilde\nu\otimes \Leb$ on $Y$ is locally constant.
Since $Y/\Gamma$ is connected, this implies that
$\nu\otimes\Leb$ is equivalent to Lebesgue measure on
$Y/\Gamma$. This is a contradiction, and proves the ergodicity
of $\mu$.

\smallskip

Among other things, we have shown that the measure $\tilde\mu$ is
admissible. This proves Proposition
\ref{prop_smooth_measure_is_projective}. To conclude the proof, we
have to complete the construction of the measures $\mu_u$ and $\mu_s$
forming the affine local product structure of $\mu$.

\smallskip

\emph{Eighth step: construction of canonical volume forms
$\mu_u(x)$ and $\mu_s(x)$, respectively on $E^u(x)$ and
$E^s(x)$, in terms of $\tilde\mu$, which are constant
respectively along $W^u$ and $W^s$. They are only defined up to
sign.}

Let $x\in X$. Identifying locally $\Teich$ with
$H^1(M,\Sigma;\C)$ thanks to the period map, we write $x=a+\ic
b$. If $\nu_u(x)$ is any volume form on $\tilde E^u(x)=\R a+
E^u(x)$, then it yields a volume form $\nu_s(x)$ on $\tilde
E^s(x)$ thanks to the identification of the third step. The
product $\nu_u(x)\wedge \nu_s(x)$ is a nonzero volume form on
$\tilde E^u(x)\oplus \tilde E^s(x)=\boT_x Y$, it is therefore
proportional to $\tilde\mu_Y(x)$. Multiplying $\nu_u(x)$ by a
unique (up to sign) normalization, we can ensure that
$\nu_u(x)\wedge\nu_s(x)=\pm\tilde\mu_Y$. Finally, let
$\mu_u(x)$ be the unique volume form on $E^u(x)$ such that
$\nu_u(x)$ is the product of $\mu_u(x)$ and Lebesgue measure on
$\R a$. Analogously, let $\mu_s(x)$ be the unique volume form
on $E^s(x)$ such that $\nu_s(x)$ is the product of $\mu_s(x)$
and Lebesgue measure on $\ic \R b$.

This construction is completely canonical up to sign, and
$\tilde\mu(x)=\pm\dLeb \wedge \mu_u(x)\wedge\mu_s(x)$ by
construction, where $\dLeb$ denotes Lebesgue measure along $\R
\omega(x)$. Since $\tilde\mu$ is $\Gamma$-invariant, it follows
that $\mu_u$ and $\mu_s$ are also $\Gamma$-invariant (possibly
up to sign).

Since $\mu_u$ is constructed in a canonical way in terms of
$\tilde\mu_Y$ and $\tilde\mu_Y$ is constant along unstable
manifolds (by the sixth step), it follows that $\mu_u$ is
constant along unstable manifolds. In the same way, $\mu_s$ is
constant along stable manifolds.

\smallskip

\emph{Ninth step: there exists $d > 0$ such that $(g_t)_* \mu_u
= e^{-dt}\mu_u$ and $(g_t)_* \mu_s = e^{dt}\mu_s$.}

Since the action of $\SL$ is ergodic, the action of the
horocycle flow is also ergodic by Howe-Moore's theorem
\cite{howe_moore}. In particular, we can choose $x$ whose orbit
is dense. For $t\geq 0$, the measure $(g_t)_* \mu_u(x)$ is a
volume form on $E^u(g_t x)$, and can therefore be written as
$e^{d(t)} \mu_u(g_t x)$ for some $d(t)\in \R$. Since the
measures $\mu_u$ are constant along the unstable manifolds of
$x$ and of $g_t x$, it follows that, for any point $y$ in the
horocycle through $x$, we also have $(g_t)_* \mu_u(y) =
e^{d(t)} \mu_u(g_t y)$. Since this horocycle is dense, $(g_t)_*
\mu_u(z)=e^{d(t)} \mu_u(g_t z)$ for any $z$.

The function $t\mapsto d(t)$ is continuous and satisfies
$d(t+t')=d(t)+d(t')$, we may therefore write $d(t)=-dt$ for
some $d\in \R$ (which has to be positive since the flow is
expanding along unstable directions). We obtain $(g_t)_* \mu_u
= e^{-dt} \mu_u$.

In the same way, we have $(g_t)_* \mu_s = e^{d't} \mu_s$ for
some $d'\geq 0$. Since $\tilde\mu=\dLeb \wedge \mu_u \wedge
\mu_s$ is $g_t$-invariant, it follows that $d=d'$.

This concludes the proofs of the ninth step and of
Propositions~\ref{prop_local_product} and
\ref{prop_smooth_measure_is_projective}.
\end{proof}

\begin{rmk}
\label{identify_d}
The scalar $d$ constructed in Proposition
\ref{prop_local_product} satisfies $d=d_u+1=d_s+1$, where $d_u$
and $d_s$ are the dimensions respectively of $E^u$ and $E^s$.

To prove this statement, let $\lambda_0,\dots,\lambda_{d_u}$ be
the Lyapunov exponents of the Kontsevich-Zorich cocycle (see
\cite{forni_deviation}) restricted to the bundle $\tilde E^u$,
for the measure $\mu$. The Lyapunov exponents of $g_t$ along
$\tilde E^u$ are given by $\nu_0=1+\lambda_0,\dots,
\nu_{d_u}=1+\lambda_{d_u}$, and their sum is equal to $d$ since
there is no expansion in the bundle $\tilde E^u/E^u$. Since
$\tilde E^s = \ic \tilde E^u$, the Lyapunov exponents of the
Kontsevich-Zorich cocycle along $\tilde E^s$ are also
$\lambda_0,\dots, \lambda_{d_u}$, and it follows that the
Lyapunov exponents of $g_t$ along $\tilde E^s$ are
$-1+\lambda_0,\dots, -1+\lambda_{d_u}$. Since $g_t$ preserves
the measure $\mu$ which is equivalent to Lebesgue measure, the
sum of its Lyapunov exponents vanishes. Hence, $\sum \lambda_i
= 0$. Finally, $d=\sum \nu_i = d_u+1+\sum \lambda_i = d_u+1$.

A suitably generalized Pesin formula also gives that $d$ is the
entropy of the measure $\mu$ for the flow $g_t$
\end{rmk}

\section{A good Finsler metric on \texorpdfstring{$\Teich$}{Teich}}

\label{sec_Finsler}

\subsection{Construction of the metric}
\label{subsec_Finsler}

To define the Banach space satisfying the conclusions of
Theorem \ref{thm_control_spectral_radius_annonce}, we will need
a Finsler metric on $\Teich$, with several good properties:
\begin{enumerate}
\item It should be complete and $\Gamma$--invariant.
\item It should behave in a controlled way close to infinity
(technically, it should be slowly varying, see the
definition below).
\item Under the Teichm\"uller flow, the metric should be
non-contracted in the unstable direction, and non-expanded
in the stable direction.
\end{enumerate}
It is certainly possible to cook up a metric satisfying these
requirements using the Hodge metric of Forni on $H^1(M;\R)$
\cite{forni_deviation} and extending it first to $H^1(M,\Sigma; \R)$
and then to $H^1(M,\Sigma;\C)$ (compare for instance
\cite{athreya_et_al}). However, \cite{AGY_teich} introduced a
geometrically defined metric that turns out to satisfy all the above
properties. This is the metric we will use for simplicity.

Let us describe this continuous Finsler metric on $\Teich$.
Since the tangent space of $\Teich$ is everywhere identified
with $H^1(M,\Sigma;\C)$ through the period map $\Phi$, it is
sufficient to define a family of norms on $H^1(M,\Sigma;\C)$,
depending continuously on the point $x\in \Teich$, as follows:
  \begin{equation*}
  \norm{v}_x = \sup \left|\frac{v(\gamma)}{\Phi(x)(\gamma)}\right|,
  \end{equation*}
where $\gamma$ runs over the saddle connections of the surface
$x$. It is proved in \cite{AGY_teich} that this is indeed a
norm, and that the corresponding Finsler metric is complete.
Let $d$ denote the distance on $\Teich$ coming from this
Finsler metric.

The two following straightforward lemmas show that this metric
behaves well with respect to the Teichm\"uller flow.

\begin{lem}
\label{SL_lipschitz}
The tangent vectors at $0$ to the families $t\mapsto g_t(x)$,
$r\mapsto h_r(x)$, $r\mapsto \tilde h_r(x)$ and $\theta \mapsto
k_\theta(x)$ are all bounded by $1$ in norm. Therefore, $d(x,g_t
x)\leq |t|$, $d(x, h_r x)\leq |r|$, $d(x, \tilde h_r(x))\leq |r|$ and
$d(x,k_\theta x) \leq |\theta|$.
\end{lem}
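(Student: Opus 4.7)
The plan is to compute each tangent vector explicitly in the period coordinates and bound its Finsler norm using elementary algebraic inequalities. Recall that the $\GL$-action on $\Teich$ is obtained by postcomposing translation charts, which means that for any saddle connection $\gamma$ of $x$ and any $A\in\GL$, we have $\Phi(A\cdot x)(\gamma)=A\cdot\Phi(x)(\gamma)$, where $A$ acts on $\C\simeq\R^2$ in the standard way. In particular the set of saddle connections is unchanged under the $\SL$-action, so the Finsler norm on $H^1(M,\Sigma;\C)$ at the point $A\cdot x$ is computed as a supremum over the same index set of saddle connections as at $x$.

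First I would handle the four cases separately. For $g_t$: the formula $g_t\cdot(\alpha+\ic\beta)=e^t\alpha+\ic e^{-t}\beta$ gives the derivative at $t=0$ equal to $\alpha-\ic\beta=\overline{\Phi(x)(\gamma)}$, whence $|v(\gamma)/\Phi(x)(\gamma)|=1$ for every $\gamma$. For $h_r$: the formula $h_r\cdot(\alpha+\ic\beta)=(\alpha+r\beta)+\ic\beta$ yields $v(\gamma)=\Ima\Phi(x)(\gamma)$, so $|v(\gamma)/\Phi(x)(\gamma)|=|\Ima\Phi(x)(\gamma)|/|\Phi(x)(\gamma)|\leq 1$. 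The case of $\tilde h_r$ is symmetric and uses $|\mathrm{Re}\,\Phi(x)(\gamma)|\leq|\Phi(x)(\gamma)|$. For $k_\theta$: since $k_\theta$ acts on $\C$ as multiplication by $e^{-\ic\theta}$, differentiating at $\theta=0$ gives $v(\gamma)=-\ic\Phi(x)(\gamma)$, so $|v(\gamma)/\Phi(x)(\gamma)|=1$. Taking the supremum over $\gamma$ in each case shows that the four tangent vectors at $x$ have norm at most $1$.

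The distance estimates then follow by applying the same computation at each point of the orbit. Indeed, for the flow $g_t$, the tangent vector to the curve $s\mapsto g_{t+s}x$ at $s=0$ is, at the point $g_tx$, precisely $\overline{\Phi(g_tx)(\gamma)}$ in the coordinate indexed by $\gamma$, so its Finsler norm at $g_tx$ equals $1$. Thus the Finsler length of the arc from $x$ to $g_tx$ is at most $|t|$, giving $d(x,g_tx)\leq|t|$. The same argument works for the other three actions: the tangent vectors along the whole orbit are still computed by the formulas above applied at the current basepoint, and the bounds of the previous paragraph apply uniformly. Integrating along the parameter yields $d(x,h_rx)\leq|r|$, $d(x,\tilde h_rx)\leq|r|$ and $d(x,k_\theta x)\leq|\theta|$.

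There is essentially no obstacle here; the lemma is a direct consequence of the definition of the Finsler norm together with the explicit action of the one-parameter subgroups in question on $\C$, and the one ``nontrivial'' step is just the pointwise inequality $|\Ima z|\leq|z|$ (and its real counterpart) applied to periods of saddle connections.
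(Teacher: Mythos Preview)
Your proof is correct and follows essentially the same approach as the paper's own proof: compute each tangent vector explicitly in period coordinates and observe that $|v(\gamma)|\leq|\Phi(x)(\gamma)|$ in every case. The paper's version is simply more terse (it writes out only the $g_t$ and $h_r$ cases and says the others are similar), while you give all four explicitly and spell out the passage from the pointwise norm bound to the distance estimate.
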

\begin{proof}
Given $x$ with $\Phi(x)=a+\ic b$, we have $\Phi(g_t x)=e^t
a+\ic e^{-t}b$, hence the tangent vector of the curve $t\mapsto
g_t x$ at $0$ is $a-\ic b$, which is clearly bounded by $1$
from the formula. Moreover, $\Phi(h_r x)=a+rb+\ic b$, hence the
tangent vector to this curve at $0$ is $b$, again bounded by
$1$. The computations are similar for $\tilde h_r$ and
$k_\theta$.
\end{proof}

\begin{lem}
\label{weakly_hyperbolic}
The Teichm\"uller flow is non-contracting in the unstable
direction and non-expanding in the stable direction, for the
above metric. More precisely, for any $t\geq 0$, for $v\in
H^1(M,\Sigma; \R)$ and $w\in H^1(M,\Sigma; \ic \R)$, we have
$\norm{ Dg_t(x) v}_{g_t x} \geq \norm{v}_x$ and $\norm{ Dg_t(x)
w}_{g_t x} \leq \norm{w}_x$.
\end{lem}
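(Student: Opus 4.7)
The plan is to compute everything directly in the period chart $\Phi$, where both the Teichm\"uller flow and the norm have explicit descriptions. For a saddle connection $\gamma$ on $x$, write $\Phi(x)(\gamma)=a+\ic b\in\C$. The key elementary observation is that for $t\geq 0$ and any $c=a+\ic b\in\C$, the complex number $\phi_t(c)\coloneqq e^t a+\ic e^{-t}b$ satisfies
\begin{equation*}
  e^{-t}|c|\leq |\phi_t(c)|\leq e^t|c|,
\end{equation*}
which is just the monotonicity of $e^{2t}a^2+e^{-2t}b^2$ as a function of $e^{2t}$ sandwiched between $e^{-2t}(a^2+b^2)$ and $e^{2t}(a^2+b^2)$.

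Next I would use the fact that the set of saddle connections of $x$ and of $g_t x$ is canonically identified: since $g_t$ acts by an affine diffeomorphism in the chart $\Phi$, a straight segment between singularities on $x$ remains a straight segment between the same singularities on $g_t x$, with period $\Phi(g_t x)(\gamma)=\phi_t(\Phi(x)(\gamma))$. Thus $\norm{\cdot}_x$ and $\norm{\cdot}_{g_t x}$ are suprema over the same index set.

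Now I would treat the two cases. For $v\in H^1(M,\Sigma;\R)$, the pairing $v(\gamma)$ is real, and in the chart $Dg_t(x)v$ corresponds to $\phi_t$ applied componentwise, which on real classes is simply multiplication by $e^t$. Hence
\begin{equation*}
  \norm{Dg_t(x)v}_{g_t x}=\sup_\gamma\frac{e^t|v(\gamma)|}{|\phi_t(\Phi(x)(\gamma))|}\geq \sup_\gamma\frac{e^t|v(\gamma)|}{e^t|\Phi(x)(\gamma)|}=\norm{v}_x,
\end{equation*}
using the upper bound in the key inequality. For $w\in H^1(M,\Sigma;\ic\R)$, the pairing $w(\gamma)$ is purely imaginary, so $Dg_t(x)w$ corresponds to multiplication by $e^{-t}$, and the same calculation but with the lower bound $|\phi_t(\Phi(x)(\gamma))|\geq e^{-t}|\Phi(x)(\gamma)|$ yields $\norm{Dg_t(x)w}_{g_t x}\leq\norm{w}_x$.

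There is no serious obstacle here; the only point that needs a comment is the identification of saddle connections on $x$ with those on $g_t x$, which is immediate from the fact that $g_t$ acts by a global affine map in the period chart. The rest is just chasing the defining formula of the Finsler norm through a one-line complex computation.
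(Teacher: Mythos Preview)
Your proof is correct and follows essentially the same approach as the paper: both compute directly in period coordinates, use $Dg_t(x)v=e^t v$ for real $v$ (resp.\ $e^{-t}w$ for imaginary $w$), and bound the denominator $|e^t a(\gamma)+\ic e^{-t}b(\gamma)|$ above by $e^t|a(\gamma)+\ic b(\gamma)|$ (resp.\ below by $e^{-t}|a(\gamma)+\ic b(\gamma)|$). Your explicit remark that the saddle connections of $x$ and $g_t x$ are the same is a point the paper leaves implicit but is worth stating.
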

\begin{proof}
We have $Dg_t(x) v = e^t v$. Moreover, if $\Phi(x)=a+\ic b$, we
have $\Phi(g_t x)=e^t a+\ic e^{-t}b$. Therefore,
  \begin{equation*}
  \norm{ Dg_t(x) v}_{g_t x}=\sup_\gamma \frac{e^t |v(\gamma)|}{ | e^t a(\gamma) + \ic e^{-t} b(\gamma)|}
  \geq \sup_\gamma \frac{e^t |v(\gamma)|}{ | e^t a(\gamma) + \ic e^{t} b(\gamma)|}
  = \norm{v}_x.
  \end{equation*}
The argument for $w$ is the same.
\end{proof}
The same computation shows that $\norm{ Dg_t(x) v}_{g_t x} \leq
e^{2t} \norm{v}_x$ and $\norm{ Dg_t(x) w}_{g_t x} \geq
e^{-2t}\norm{w}_x$, which corresponds to the classical fact
that the upper and lower Lyapunov exponents of the Teichm\"uller
flow are respectively $2$ and $-2$.

\medskip

Let $\tilde\mu$ be an admissible measure, and let $X$ denote its
support. The above Finsler metric can be restricted to every stable
or unstable manifold in $X$, and therefore defines distances
$d_{W^u}$, $d_{W^s}$ on those manifolds. For $r>0$, we denote by
$W^u_r(x)$ the ball of radius $r$ around $x$ in $E^u(x)$ for the
distance $d_{W^u}$.

Fix $x\in X$. Let $\Psi=\Psi_x$ be the canonical local
parametrization of the affine manifold $W^u(x)$ by its tangent plane
$E^u(x)$. More formally, we define $\Psi(v)$ for $v\in E^u(x)$ as
follows. Consider the path $\paths$ starting from $x$ with
$\paths'(t)=v$ for all $t$. For small $t$, $\paths(t)$ is well
defined and belongs to $W^u(x)$. It is possible that $\paths(t)$ is
not defined for large $t$, since it could explode to infinity in
$\Teich$. If the path $\paths$ is well defined for all $t\in [0,1]$,
then we define $\Psi(v)=\paths(1)$.

Let us denote by $B(0,r)$ the ball of radius $r$ in $E^u(x)$, for the
norm $\norm{\cdot}_x$. The main result of this section is the following
proposition, showing that the norm $\norm{\cdot}_x$ varies slowly in
fixed size neighborhoods of any point in the non-compact space $X$.
This is a kind of bounded curvature behavior. Note however that this
metric depends only in a continuous way on the point, so we can not
use true curvature arguments.

\begin{prop}
\label{Psigood}
The map $\Psi$ is well defined on $B(0,1/2)$, and $d_{W^u}(x,
\Psi(v)) \leq 2 \norm{v}_x$ there. Moreover, for $v\in B(0,1/2)$, and
for every $w\in E^u(x)$,
  \begin{equation}
  \label{comparable_norms}
  1/2 \leq \frac{\norm{w}_x}{\norm{w}_{\Psi(v)}} \leq 2.
  \end{equation}
Finally, for $v\in B(0,1/25)$, we have $d_{W^u}(x,\Psi(v))\geq
\norm{v}_x/2$.
\end{prop}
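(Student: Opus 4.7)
The plan is to exploit the explicit description of $W^u(x)$ via period coordinates: since $W^u(x)$ is an affine submanifold, the map $\Psi$ is, at the level of $\Phi$, just the translation $v\mapsto \Phi(x)+v$, so $\Phi(\Psi(v))=\Phi(x)+v$ whenever $\Psi(v)$ is defined. The crucial observation is that by definition of $\norm{\cdot}_x$ one has $|v(\gamma)|\leq \norm{v}_x\,|\Phi(x)(\gamma)|$ for every saddle connection $\gamma$ of $x$, so for $v\in B(0,1/2)$ and $t\in[0,1]$,
\begin{equation*}
|\Phi(x+tv)(\gamma)|\geq |\Phi(x)(\gamma)|-t|v(\gamma)|\geq (1-\norm{v}_x)|\Phi(x)(\gamma)|\geq |\Phi(x)(\gamma)|/2>0,
\end{equation*}
so no saddle connection of $x$ shrinks to zero along the straight-line path $\paths(t)=x+tv$. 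Combined with the stability of saddle connections under small period perturbations from \cite{AGY_teich}, this yields that $\paths(t)$ stays in $\Teich$ for all $t\in[0,1]$, so $\Psi$ is well defined on $B(0,1/2)$.

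The technical heart is the two-sided comparison \eqref{comparable_norms}. The difficulty is that the sup defining $\norm{\cdot}_{\Psi(v)}$ ranges over saddle connections of $\Psi(v)$, which need not coincide with those of $x$. The key tool is an AGY-style straightening: any saddle connection $\gamma$ of $\Psi(v)$ can be decomposed on $x$ into a concatenation $\gamma=\gamma_1+\cdots+\gamma_k$ of saddle connections of $x$ all pointing in approximately the same complex direction as $\gamma$ does on $\Psi(v)$, so that $\Phi(x)(\gamma)=\sum \Phi(x)(\gamma_i)$ with essentially no cancellation, i.e.\ $|\Phi(x)(\gamma)|$ is comparable to $\sum|\Phi(x)(\gamma_i)|$. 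The same bound as above then gives $|\Phi(\Psi(v))(\gamma)|\geq (1-\norm{v}_x)|\Phi(x)(\gamma)|$, while $|w(\gamma)|\leq \sum |w(\gamma_i)|\leq \norm{w}_x \sum |\Phi(x)(\gamma_i)|$ yields $|w(\gamma)/\Phi(\Psi(v))(\gamma)|\leq 2\norm{w}_x$. The reverse inequality follows by the symmetric argument, decomposing saddle connections of $x$ into sums of saddle connections of $\Psi(v)$ (via the translation by $-v$ viewed from the basepoint $\Psi(v)$) and using $|\Phi(\Psi(v))(\gamma)|/|\Phi(x)(\gamma)|\leq 1+\norm{v}_x\leq 3/2$.

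The upper length estimate is then immediate: the tangent vector to the linear path $\paths(t)=\Psi(tv)$ is $v$ in period coordinates, so $\norm{\paths'(t)}_{\paths(t)}\leq 2\norm{v}_x$ by \eqref{comparable_norms}, and integrating over $[0,1]$ gives $d_{W^u}(x,\Psi(v))\leq 2\norm{v}_x$. For the lower bound with $v\in B(0,1/25)$, let $\tilde\paths:[0,1]\to W^u(x)$ be any piecewise-$C^1$ path from $x$ to $\Psi(v)$. If $\tilde\paths$ stays inside $\Psi(B(0,1/2))$, write $\tilde\paths(t)=\Psi(\zeta(t))$ with $\zeta(0)=0$, $\zeta(1)=v$; applying \eqref{comparable_norms} pointwise gives
\begin{equation*}
\length(\tilde\paths)=\int_0^1 \norm{\zeta'(t)}_{\tilde\paths(t)}\dd t\geq \tfrac{1}{2}\int_0^1\norm{\zeta'(t)}_x\dd t\geq \tfrac{1}{2}\norm{\zeta(1)-\zeta(0)}_x=\tfrac{1}{2}\norm{v}_x.
\end{equation*}
Otherwise $\tilde\paths$ exits $\Psi(B(0,1/2))$, and applying the same estimate to its initial segment up to the first exit time produces length at least $\tfrac{1}{2}\cdot\tfrac{1}{2}=\tfrac{1}{4}$, which exceeds $\norm{v}_x/2$ since $\norm{v}_x\leq 1/25$. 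The main obstacle is making the straightening step of Step~2 rigorous, since the set of saddle connections varies discontinuously along $W^u(x)$ and the control on the alignment of the $\gamma_i$ on $x$ is what ultimately feeds into both the norm comparison and (via the velocity estimate) the length bounds.
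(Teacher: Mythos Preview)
Your overall structure is reasonable, and your lower-bound argument (lifting a near-geodesic to $E^u(x)$ and applying \eqref{comparable_norms}) is essentially the paper's. But there are two genuine gaps.

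\textbf{Well-definedness.} Showing that the periods of the \emph{old} saddle connections of $x$ stay bounded away from zero along $t\mapsto \Phi(x)+tv$ is not enough to keep the path inside $\Teich$: along the way new, short saddle connections may appear (indeed this is exactly how saddle connections behave---they split, and the pieces can be arbitrarily short). The correct mechanism is completeness of the Finsler metric: once you know the path has finite Finsler length, it cannot escape $\Teich$. But your length bound comes \emph{after} \eqref{comparable_norms}, so your argument is circular at this point.

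\textbf{The straightening step.} You yourself flag this as the obstacle, and it is real. The claim that a saddle connection $\gamma$ of $\Psi(v)$ decomposes on $x$ as $\gamma_1+\cdots+\gamma_k$ with $|\Phi(x)(\gamma)|$ comparable to $\sum|\Phi(x)(\gamma_i)|$ is not something one can get from a direct endpoint comparison: the saddle-connection sets of $x$ and $\Psi(v)$ bear no a priori combinatorial relation. What \emph{does} work is to follow the path $t\mapsto x+tv$ and track the splitting events: at each moment a saddle connection breaks, it breaks into \emph{collinear} pieces, so the alignment you need is automatic at that instant. Iterating this is exactly the content of the paper's Proposition~\ref{norm_slowly_varies}, which gives $e^{-\length(\paths)}\leq \norm{w}_{\paths(0)}/\norm{w}_{\paths(1)}\leq e^{\length(\paths)}$ for any path $\paths$.

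Once you have that estimate, the paper's route is short and avoids both gaps simultaneously. Apply it to $w=v=\paths'(t)$ to get $G'(t)\leq \norm{v}_x\, e^{G(t)}$ for $G(t)=\int_0^t\norm{\paths'}_{\paths}$; this Gronwall inequality yields $G(t)\leq -\log(1-t\norm{v}_x)$, hence $G(1)\leq\log 2$ for $\norm{v}_x<1/2$. Finiteness of $G$ plus completeness gives well-definedness; $G(1)\leq\log 2$ fed back into Proposition~\ref{norm_slowly_varies} gives \eqref{comparable_norms}; and $d_{W^u}(x,\Psi(v))\leq G(1)\leq \norm{v}_x/(1-\norm{v}_x)\leq 2\norm{v}_x$ gives the upper length bound. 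Your lower bound then goes through as written.
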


Before proving this proposition, let us give a simple consequence for
the doubling property of $\mu_u$. Again, the interest of this
proposition is that the estimates are uniform, even though $X$ is not
compact.
\begin{cor}
\label{cor_doubling}
Let $\tilde\mu$ be a measure with an affine local product structure,
supported on a submanifold $X$. There exists $C>0$ such that, for
every $x\in X$ and every $r\leq 1/100$, $\mu_u(W^u_{2r}(x)) \leq C
\mu_u(W^u_{r}(x))$.
\end{cor}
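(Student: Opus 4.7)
The plan is to transfer the question to a doubling estimate for translation-invariant Lebesgue measure on the affine space $E^u(x)$ via the parametrization $\Psi=\Psi_x$ of Proposition~\ref{Psigood}. Since $W^u(x)$ is affine and $\mu_u$ is constant along it (item~\ref{pointmuu} of Proposition~\ref{prop_local_product}), the pullback $\Psi^*\mu_u$ coincides with the translation-invariant Lebesgue measure on $E^u(x)$ associated to the volume form $\mu_u(x)$. Writing $d_u=\dim E^u(x)$, this yields the homogeneity
\begin{equation*}
\mu_u(\Psi(B(0,R))) = R^{d_u}\cdot\mu_u(x)(B(0,1))\quad\text{for every } R\le 1/2,
\end{equation*}
where $B(0,R)\subset E^u(x)$ denotes the $\norm{\cdot}_x$-ball.

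The heart of the argument is to sandwich the Finsler ball between two images of norm balls:
\begin{equation*}
\Psi(B(0,r/2))\ \subset\ W^u_r(x)\ \subset\ \Psi(B(0,2r))\quad\text{for every } r\le 1/100.
\end{equation*}
The left inclusion is immediate from the bound $d_{W^u}(x,\Psi(v))\le 2\norm{v}_x$ proved in Proposition~\ref{Psigood}. For the right inclusion, given $y\in W^u_r(x)$ I would pick an almost-minimizing path $\gamma:[0,1]\to W^u(x)$ from $x$ to $y$ of length at most $r+\varepsilon$, and write $\gamma(s)=\Psi(w(s))$ with $w(0)=0$ in the affine chart. The bi-Lipschitz estimate \eqref{comparable_norms} gives $\norm{\gamma'(s)}_x\le 2\norm{\gamma'(s)}_{\gamma(s)}$ whenever $\gamma(s)\in\Psi(B(0,1/2))$, whence $\norm{w(1)}_x\le 2\length(\gamma)\le 2(r+\varepsilon)$. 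A continuity bootstrap---let $s_*$ be the supremum of $s$ with $\gamma([0,s])\subset\Psi(B(0,1/2))$, and note that $2(r+\varepsilon)<1/2$ prevents $\gamma$ from exiting $\Psi(B(0,1/2))$ before time $1$---forces $s_*=1$, and sending $\varepsilon\to 0$ gives $y\in\Psi(B(0,2r))$.

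Combining the sandwich with the homogeneity identity yields
\begin{equation*}
\mu_u(W^u_{2r}(x))\le (4r)^{d_u}\mu_u(x)(B(0,1))\quad\text{and}\quad \mu_u(W^u_{r}(x))\ge (r/2)^{d_u}\mu_u(x)(B(0,1)),
\end{equation*}
so that $C=8^{d_u}$ works uniformly in $x$ and in $r\le 1/100$. The only real obstacle is the right-hand inclusion, since that is where the non-compactness of $X$ could in principle interfere; however, the uniform bi-Lipschitz control built into Proposition~\ref{Psigood} is precisely what makes the path bootstrap go through with constants independent of $x$. Everything else reduces to the affine structure of unstable leaves, the constancy of $\mu_u$ along them, and the homogeneity of Lebesgue measure under dilations.
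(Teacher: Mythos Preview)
Your argument is correct and essentially identical to the paper's: both use the constancy of $\mu_u$ along $W^u(x)$ to reduce to Lebesgue measure on $E^u(x)$, sandwich $W^u_r(x)$ between $\Psi(B(0,r/2))$ and $\Psi(B(0,2r))$ via Proposition~\ref{Psigood}, and obtain $C=8^{d_u}$. The only cosmetic difference is that you rederive the inclusion $W^u_r(x)\subset\Psi(B(0,2r))$ via the path-lifting bootstrap, whereas the paper simply invokes the last clause of Proposition~\ref{Psigood} (whose proof is exactly that bootstrap).
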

\begin{proof}
By Proposition \ref{Psigood}, $\Psi^{-1}( W^u_{2r}(x)) \subset B(0,4r)$ and
$\Psi^{-1}( W^u_{r}(x)) \supset B(0,r/2)$. Since $y\mapsto \mu_u(y)$
is constant along $W^u(x)$, we have (denoting by $d_u$ the dimension
of $E^u(x)$)
  \begin{align*}
  \mu_u( W^u_{2r}(x)) &= \mu_u(x) (\Psi^{-1}( W^u_{2r}(x))) \leq \mu_u(x)(B(0,4r))
  \\&
  =8^{d_u} \mu_u(x)(B(0,r/2))
  \leq 8^{d_u} \mu_u(x) (\Psi^{-1}( W^u_{r}(x)))
  = 8^{d_u} \mu_u(W^u_{r}(x)).
  \qedhere
  \end{align*}
\end{proof}

The central point in the proof of Proposition~\ref{Psigood} is the
following proposition.

\begin{prop}
\label{norm_slowly_varies}
Let $\paths:[0,1] \to \Teich$ be a $C^1$ path. For each $v\in
H^1(M,\Sigma; \C)$,
  \begin{equation*}
  e^{-\length(\paths)}
  \leq
  \frac{ \norm{v}_{\paths(0)}}{\norm{v}_{\paths(1)}}
  \leq e^{\length(\paths)}.
  \end{equation*}
where $\length(\paths) = \int_{t=0}^1 \norm{
\paths'(t)}_{\paths(t)} \dd t$.
\end{prop}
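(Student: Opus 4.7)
The plan is to reduce the inequality to an infinitesimal Lipschitz bound on $f(t) := \log \norm{v}_{\paths(t)}$: once we know $|f'(t)| \leq \norm{\paths'(t)}_{\paths(t)}$ (in a Dini-derivative sense), integrating from $0$ to $1$ yields $|f(1) - f(0)| \leq \length(\paths)$, which is exactly the two-sided statement.

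The infinitesimal computation rests on the stability of saddle connections under deformation of the flat structure. Fix a class $\gamma \in H_1(M,\Sigma;\Z)$ represented by a saddle connection of $\paths(t_0)$. The straight-line segment realizing $\gamma$ meets the singular set only at its two endpoints, a condition preserved under small perturbations of the translation structure; hence $\gamma$ remains a saddle connection of $\paths(t)$ for $t$ in an open neighborhood of $t_0$, and the complex period $\phi_\gamma(t) := \Phi(\paths(t))(\gamma)$ is smooth and nonzero there. Differentiating and applying the definition of the Finsler norm to the tangent vector $\paths'(t) \in H^1(M,\Sigma;\C)$, using that $\gamma$ is among the saddle connections of $\paths(t)$,
\begin{equation*}
  \left| \frac{d}{dt} \log |\phi_\gamma(t)| \right| \;\leq\; \frac{|\phi_\gamma'(t)|}{|\phi_\gamma(t)|} \;=\; \frac{|\paths'(t)(\gamma)|}{|\Phi(\paths(t))(\gamma)|} \;\leq\; \norm{\paths'(t)}_{\paths(t)}.
\end{equation*}

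To transfer this to a bound on $f$, fix $t_0$ and $\epsilon>0$, and pick a saddle connection $\gamma_\epsilon$ of $\paths(t_0)$ with $\log|v(\gamma_\epsilon)| - \log|\phi_{\gamma_\epsilon}(t_0)| \geq f(t_0) - \epsilon$. By the stability above, $\gamma_\epsilon$ is a saddle connection of $\paths(t)$ on some open neighborhood $I_\epsilon$ of $t_0$, and there
\begin{equation*}
  f(t) \;\geq\; \log|v(\gamma_\epsilon)| - \log |\phi_{\gamma_\epsilon}(t)| \;\geq\; f(t_0) - \epsilon - \left|\int_{t_0}^t \norm{\paths'(s)}_{\paths(s)} \dd s\right|.
\end{equation*}
Letting $\epsilon \to 0$ gives $f(t) - f(t_0) \geq -\left|\int_{t_0}^t \norm{\paths'(s)}_{\paths(s)} \dd s\right|$ for $t$ close to $t_0$; exchanging the roles of $t$ and $t_0$ (now choosing an almost-supremal $\gamma$ for $\paths(t)$ and using its persistence as a saddle connection of $\paths(t_0)$) yields the matching upper bound. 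Thus $f$ is absolutely continuous with $|f'| \leq \norm{\paths'}_{\paths}$ almost everywhere, and integration delivers the proposition.

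The step I expect to be the main obstacle is the exchange-of-endpoints argument: a priori the neighborhood $I_\epsilon$ on which $\gamma_\epsilon$ remains a saddle connection can shrink with $\epsilon$, and when we reverse the roles of $t$ and $t_0$ the near-supremal saddle connection is being re-chosen for every $t$. The point is that stability of saddle connections is symmetric in the time variable (around any base time $\gamma$ persists as a saddle connection on a two-sided open interval), and continuity of $f$, itself a byproduct of the one-sided bound already proved, lets one upgrade the one-sided information into a genuine Lipschitz estimate without requiring uniform control of the size of $I_\epsilon$.
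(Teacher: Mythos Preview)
Your derivative bound for a single persisting saddle connection is correct and is exactly the content of the paper's Lemma~\ref{when_survives}. The difficulty is the passage from this pointwise estimate to a global Lipschitz bound on $f(t)=\log\norm{v}_{\paths(t)}$, and here your argument has a real gap.

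The problem is precisely the one you flag but do not resolve. From ``for every $\epsilon>0$ there is a neighbourhood $I_\epsilon$ of $t_0$ on which $f(t)\ge f(t_0)-\epsilon-\bigl|\int_{t_0}^t\norm{\paths'}\bigr|$'' you cannot let $\epsilon\to 0$ for a fixed $t\ne t_0$: nothing prevents $I_\epsilon$ from shrinking to $\{t_0\}$, and it may well do so if the supremum defining $\norm{v}_{\paths(t_0)}$ is not attained and near-optimal saddle connections are long (hence fragile under perturbation). What you actually obtain is only right lower semicontinuity of $G(t)=f(t)+\int_0^t\norm{\paths'}$ at every point, and this by itself does not force $G(1)\ge G(0)$. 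Your proposed rescue, via continuity of $f$, is circular: upper semicontinuity of $f$ is exactly the missing half, and your ``exchange of roles'' argument for it runs into the same shrinking-neighbourhood obstruction (the near-optimal $\gamma$ chosen at $t$ need not persist back to $t_0$).

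The paper sidesteps the issue entirely by never trying to control $f$ locally. Instead it fixes a single saddle connection $\gamma$ of $\paths(0)$ and follows it forward. When $\gamma$ ceases to be a saddle connection at some time $t_1$, it is because a singularity lands on its interior, so $\gamma$ splits into collinear saddle connections $\gamma_1,\dots,\gamma_k$ with $\gamma=\sum\gamma_i$ in homology and $|\paths(t_1)(\gamma)|=\sum_i|\paths(t_1)(\gamma_i)|$; hence
\[
\frac{|v(\gamma)|}{|\paths(t_1)(\gamma)|}\le\max_i\frac{|v(\gamma_i)|}{|\paths(t_1)(\gamma_i)|}.
\]
One then continues tracking the finite set of pieces, splitting again whenever necessary. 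A counting argument (the total length of the tracked family stays bounded while the systole along the compact path $\paths([0,1])$ is bounded below, and the cardinality strictly increases at each split) shows the process terminates after finitely many steps, yielding $|v(\gamma)/\paths(0)(\gamma)|\le e^{\length(\paths)}\norm{v}_{\paths(1)}$ directly. This splitting inequality is the missing ingredient in your approach; it is what handles the disappearance of saddle connections, which your persistence argument cannot accommodate.
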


By symmetry, it is sufficient to prove the upper bound. For the
proof, we start with the following lemma. We will write
$\paths(t)(\gamma)$ instead of $\Phi(\paths(t))(\gamma)$.
\begin{lem}
\label{when_survives}
Let $\gamma$ be a saddle connection surviving in the surface
$\paths(t)$, $t\in [t_1,t_2]$. Then
  \begin{equation*}
  \frac{|\paths(t_2)(\gamma)|}{|\paths(t_1)(\gamma)|}
  \leq e^{\int_{t_1}^{t_2} \norm{ \paths'(t)}_{\paths(t)} \dd t}.
  \end{equation*}
\end{lem}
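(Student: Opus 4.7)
The plan is to view the complex quantity $\paths(t)(\gamma)$ as a holomorphic-in-the-period curve and track its modulus by differentiating the logarithm. Since $\gamma$ survives as a saddle connection throughout $[t_1,t_2]$, the function $t\mapsto \paths(t)(\gamma)$ is a nonvanishing $C^1$ curve in $\C^*$, so $t\mapsto \log|\paths(t)(\gamma)|$ is well-defined and $C^1$ on $[t_1,t_2]$.

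The key step is the pointwise derivative bound. Differentiating gives
\begin{equation*}
\frac{\de}{\de t}\log|\paths(t)(\gamma)| \;=\; \Re\!\left(\frac{\paths'(t)(\gamma)}{\paths(t)(\gamma)}\right),
\end{equation*}
so in absolute value this is at most $|\paths'(t)(\gamma)|/|\paths(t)(\gamma)|$. Here $\paths'(t)\in H^1(M,\Sigma;\C)$ is identified with the tangent vector of the curve via the period map $\Phi$, and $\gamma$ is by hypothesis a saddle connection of the surface $\paths(t)$. So by the very definition of the Finsler norm,
\begin{equation*}
\left|\frac{\paths'(t)(\gamma)}{\paths(t)(\gamma)}\right| \;\leq\; \sup_{\gamma'}\left|\frac{\paths'(t)(\gamma')}{\Phi(\paths(t))(\gamma')}\right| \;=\; \norm{\paths'(t)}_{\paths(t)},
\end{equation*}
where the supremum runs over saddle connections of $\paths(t)$.

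Integrating the inequality $\bigl|\tfrac{\de}{\de t}\log|\paths(t)(\gamma)|\bigr|\leq \norm{\paths'(t)}_{\paths(t)}$ from $t_1$ to $t_2$ yields
\begin{equation*}
\log\frac{|\paths(t_2)(\gamma)|}{|\paths(t_1)(\gamma)|} \;\leq\; \int_{t_1}^{t_2}\norm{\paths'(t)}_{\paths(t)}\dd t,
\end{equation*}
and exponentiating gives the claim. I expect no serious obstacle: the whole argument rests on the fact that the norm at $\paths(t)$ is defined as a supremum that includes the saddle connection $\gamma$, so evaluating $\paths'(t)$ against $\gamma/\paths(t)(\gamma)$ is automatically controlled. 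The only subtlety is ensuring that $\gamma$ remains a legitimate saddle connection over the whole interval (which is exactly the surviving hypothesis) so that it is eligible to appear in the supremum defining $\norm{\cdot}_{\paths(t)}$ for every $t\in[t_1,t_2]$; this justifies the pointwise bound uniformly in $t$.
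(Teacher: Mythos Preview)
Your proof is correct and follows essentially the same approach as the paper: both compute the derivative of $t\mapsto\log|\paths(t)(\gamma)|$ as $\Re(\paths'(t)(\gamma)/\paths(t)(\gamma))$, bound it pointwise by $\norm{\paths'(t)}_{\paths(t)}$ via the definition of the Finsler norm (using that $\gamma$ is a saddle connection of $\paths(t)$ by the surviving hypothesis), and integrate.
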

\begin{proof}
Let $t\in [t_1,t_2]$. For small $h$,
  \begin{align*}
  \log |\paths(t+h)(\gamma)|&
  =\log |\paths(t)(\gamma) + h \paths'(t)(\gamma)+o(h)|
  \\&
  =\log |\paths(t)(\gamma)| + \log | 1+ h \paths'(t)(\gamma) / \paths(t)(\gamma) + o(h)|
  \\&
  =\log |\paths(t)(\gamma)| + h \Re(\paths'(t)(\gamma) / \paths(t)(\gamma))+o(h).
  \end{align*}
Hence, $t\mapsto \log |\paths(t)(\gamma)|$ is differentiable,
and its derivative $\Re(\paths'(t)(\gamma) /\paths(t)(\gamma))$
is bounded in norm by $\norm{ \paths'(t)}_{\paths(t)}$. The
result follows.
\end{proof}

\begin{proof}[Proof of Proposition \ref{norm_slowly_varies}]
For $0\leq t'_1\leq t'_2 \leq 1$, let us write
  \begin{equation*}
  I(t'_1,t'_2) = e^{\int_{t'_1}^{t'_2}
  \norm{\paths'(t)}_{\paths(t)} \dd t}.
  \end{equation*}
Let $\gamma$ be a fixed saddle connection in the surface
$\paths(0)$, we want to show that
  \begin{equation}
  \label{qslkdjflmqsdf}
  |v(\gamma)/ \paths(0)(\gamma)| \leq  I(0,1) \norm{v}_{\paths(1)}.
  \end{equation}

We define by induction a sequence of times $t_0<t_1<\dots$, and
sets $\Gamma_n$ of saddle connections on the surface
$\paths(t_n)$, as follows.

Let $t_0=0$ and $\Gamma_0=\{\gamma\}$. Assume $t_n$ and
$\Gamma_n$ are defined. If all the saddle connections in
$\Gamma_n$ survive in the surfaces $\paths(t)$, $t\in[t_n,1]$,
we let $t_{n+1}=1$ and stop the process here. Otherwise, let
$t_{n+1} \in (t_n,1]$ be the first time one or several saddle
connections in $\Gamma_n$ disappear. If $\tilde\gamma$ is such
a saddle connection, it means that other singularity points
arrive on $\tilde\gamma$, i.e., $\tilde\gamma$ is split in
$\paths(t_{n+1})$ into a finite set
$\{\gamma_1,\dots,\gamma_k\}$ of saddle connections, which are
all in the same direction. In particular, in homology,
$\tilde\gamma=\sum\gamma_i$, and moreover
$|\paths(t_{n+1})(\tilde\gamma)|=\sum
|\paths(t_{n+1})(\gamma_i)|$. We let $\Gamma_{n+1}$ be the
union of all the saddle connections in $\Gamma_n$ that survive
up to time $t_{n+1}$, and all the newly created saddle
connections $\gamma_i$.

We now show that this inductive construction reaches $t=1$ in a
finite number of steps. Let $S_n=\sum_{\tilde\gamma\in
\Gamma_n} |\paths(t_n)(\tilde\gamma)|$. For $\tilde\gamma\in
\Gamma_n$, Lemma \ref{when_survives} shows that
$|\paths(t_{n+1}-\epsilon)(\tilde\gamma)|\leq I(t_n,
t_{n+1}-\epsilon)|\paths(t_n)(\tilde\gamma)|$. Summing over
$\tilde\gamma$ and letting $\epsilon$ tend to $0$, we get
$S_{n+1} \leq I(t_n,t_{n+1}) S_n$. In particular, $S_n$ is
uniformly bounded, since $S_n\leq I(0, t_n) S_0\leq I(0, 1)
S_0$. Moreover, the length of saddle connections in all the
surfaces $\paths(t)$ is bounded from below, since
$\paths([0,1])$ is a compact subset of the Teichm\"uller space.
This implies that the cardinality of $\Gamma_n$ is uniformly
bounded. Since $\Card \Gamma_{n+1} \geq \Card\Gamma_n +1$, this
would give a contradiction if the inductive process did not
stop after finitely many steps.

We claim that, for all $n$,
  \begin{equation}
  \label{firsttime}
  \sup_{\tilde\gamma\in \Gamma_n} |v(\tilde\gamma)/ \paths(t_n)(\tilde\gamma)|
  \leq I(t_n,t_{n+1}) \sup_{\tilde\gamma\in \Gamma_{n+1}} |v(\tilde\gamma)/ \paths(t_{n+1})(\tilde\gamma)|.
  \end{equation}
Let $N$ be such that $t_N=1$. Multiplying these inequalities
for $n=0,\dots,N-1$, we obtain \eqref{qslkdjflmqsdf},
concluding the proof. We now prove \eqref{firsttime}. Let
$\tilde\gamma\in \Gamma_n$. If $\tilde\gamma$ survives up to
time $t_{n+1}$, Lemma \ref{when_survives} gives
$|v(\tilde\gamma)/ \paths(t_n)(\tilde\gamma)| \leq
I(t_n,t_{n+1}) |v(\tilde\gamma)/
\paths(t_{n+1})(\tilde\gamma)|$, as desired. Otherwise,
$\tilde\gamma$ is split at time $t_{n+1}$ into finitely many
saddle connections $\gamma_1,\dots,\gamma_k$. For small
$\epsilon>0$, the saddle connection $\tilde\gamma$ survives
from time $t_n$ to time $t_{n+1}-\epsilon$. Therefore, Lemma
\ref{when_survives} gives $|v(\tilde\gamma)/
\paths(t_n)(\tilde\gamma)| \leq I(t_n,
t_{n+1}-\epsilon)|v(\tilde\gamma)/
\paths(t_{n+1}-\epsilon)(\tilde\gamma)|$. When $\epsilon$ tends
to $0$, this tends to
  \begin{align*}
  I(t_n,t_{n+1}) \frac{ |v(\tilde\gamma)|}{ |\paths(t_{n+1}) (\tilde\gamma)|}&
  =I(t_n,t_{n+1}) \frac{ |\sum v(\gamma_i)|}{ \sum |\paths(t_{n+1}) (\gamma_i)|}
  \leq I(t_n,t_{n+1}) \frac{ \sum |v(\gamma_i)|}{\sum |\paths(t_{n+1}) (\gamma_i)|}
  \\&
  \leq I(t_n,t_{n+1}) \sup \frac{ |v(\gamma_i)|}{ |\paths(t_{n+1})(\gamma_i)|}.
  \end{align*}
This proves \eqref{firsttime}.
\end{proof}

\begin{proof}[Proof of Proposition~\ref{Psigood}]
Let $\paths$ be the path starting from $x$ with $\paths'=v$. If
$\paths$ is well defined on an interval $[0,t_0]$, then for
$t\in [0,t_0]$
  \begin{equation*}
  \norm{\paths'(t)}_{\paths(t)} = \norm{v}_{\paths(t)}
  \leq \norm{v}_{x} e^{\int_0^t \norm{\paths'(r)}_{\paths(r)} \dd r},
  \end{equation*}
by Proposition \ref{norm_slowly_varies}. Therefore, the
function $t\mapsto G(t)=\int_0^t \norm{\paths'(r)}_{\paths(r)}
\dd r$ satisfies $G'(t) \leq e^{G(t)} \norm{v}_x$, i.e., $
(-e^{-G(t)})' \leq  \norm{v}_x$. Integrating this inequality
gives $G(t) \leq -\log(1-t\norm{v}_x)$, and therefore
  \begin{equation*}
  \norm{\paths'(t)}_{\paths(t)} \leq \frac{\norm{v}_x}{1-t \norm{v}_x}.
  \end{equation*}
If $\norm{v}_x <1$, this quantity remains bounded for $t\in
[0,1]$. Therefore, $\Psi$ is well defined on such vectors $v$.
In particular, $\Psi$ is well defined on the ball $B(0,1/2)$.
Moreover, $d_{W^u}(x,\Psi(v))\leq \int_0^1
\norm{\paths'(t)}_{\paths(t)} \leq \norm{v}_x/(1- \norm{v}_x)$.
For $v\in B(0,1/2)$, this gives
  \begin{equation}
  \label{Psicontracte}
  d_{W^u}(x,\Psi(v)) \leq 2 \norm{v}_x.
  \end{equation}
Using the same notation $G$ as above, Proposition
\ref{norm_slowly_varies} shows that, for every $v\in B(0,1/2)$
and every $w\in E^u(x)$, we have $e^{-G(1)}\leq
\frac{\norm{w}_x}{\norm{w}_{\Psi(v)}} \leq e^{G(1)}$. Since
$G(1)\leq \log 2$, this proves \eqref{comparable_norms}.

Let us now prove that, for $v\in B(0,1/25)$, we also have
  \begin{equation}
  \label{Psidilate}
  d_{W^u}(x,\Psi(v)) \geq \norm{v}_x/2.
  \end{equation}
Consider $\paths:[0,1] \to W^u(x)$ an almost minimizing path for the
distance $d_{W^u}$, between $x$ and $\Psi(v)$. By
\eqref{Psicontracte}, its length is less than $1/10$. Let us lift
$\paths$ to a path $\tilde\paths$ taking values in $E^u(x)$, starting
from $0$ and such that $\paths=\Psi\circ \tilde\paths$, as long as
$\tilde\paths$ stays in $B(0,1/2)$.

While $\tilde\paths(t)$ is defined, we have by
\eqref{comparable_norms} $\norm{\tilde\paths'(t)}_x \leq 2
\norm{\tilde\paths'(t)}_{\paths(t)}$. Integrating this
inequality from $0$ to $t$, we get
  \begin{equation*}
  \norm{\tilde\paths(t)}_x \leq \int_0^t \norm{\tilde\paths'(r)}_x \dd r
  \leq 2 \int_0^t \norm{\tilde\paths'(r)}_{\paths(r)} \dd r
  \leq 2 \length(\paths)
  \leq 1/5.
  \end{equation*}
Therefore, $\tilde\paths(t)$ stays in $B(0,1/2)$, and the
lifting process may be continued up to $t=1$, where
$\tilde\paths(1)=v$. We get $\norm{v}_x \leq 2
\length(\paths)$. Hence, $\norm{v}_x \leq 2
d_{W^u}(x,\Psi(v))$, proving \eqref{Psidilate}.
\end{proof}

\subsection{\texorpdfstring{$C^k$}{Ck} norm and partitions of unity}

When $(E,\norm{\cdot})$ is a normed vector space and $f$ is a $C^k$
function on an open subset of $E$, let $c_k(f)=\sup |D^k f(x ;
v_1,\dots,v_k)|$ where the supremum is taken on the points $x$ in the
domain of $f$, and the tangent vectors $v_1,\dots, v_k$ of norm at
most $1$.

If an affine manifold has a Finsler metric, we can define in
the same way the $c_k$ coefficients of a function, using the
affine structure to define the $k$-th differential at every
point, and the Finsler metric to measure the tangent vectors.
Note that the (possibly non-smooth) variation of the Finsler
metric from point to point plays no role in this definition,
since it only uses the Finsler metric at a fixed point. Those
coefficients behave well under the composition with affine
maps.

We can then define the $C^k$ norm of a function by
$\norm{f}_{C^k}=\sum_{j=0}^k c_j(f)$. When we say that a
function is $C^k$ on a non-compact space, we really mean that
its $C^k$ norm is finite.

\begin{rmk}
\label{rem_def_norme_vecteur}
There are several more general situations where this definition has a
natural extension. Consider for example the following case: $W$ is an
affine submanifold of an affine Finsler manifold $Z$, and $v$ is a
vector field defined on $W$ (but pointing in any direction in $Z$).
Then, for $x\in W$ and $v_1,\dots, v_k \in \boT_x W$, the $k$-th
differential $D^k v(x ; v_1,\dots,v_k)$ is well defined and belongs
to the normed vector space $\boT_x Z$. We can therefore define
$c_k(v)$ as the supremum of the quantities $\norm{ D^k
v(x;v_1,\dots,v_k)}_x$, for $x\in W$ and $v_1,\dots,v_k \in \boT_x W$
with $\norm{v_i}_x\leq 1$. Finally, we set as above
$\norm{v}_{C^k}=\sum_{j=0}^k c_j(v)$.
\end{rmk}

Note however that there are several situations where it is not
possible to canonically define a $C^k$ norm as above. For
instance, on a general Finsler manifold, there is no canonical
connection, and therefore $D^k f$ is not well defined. In the
same way, in Remark \ref{rem_def_norme_vecteur}, if $W$ is not
affine or if $Z$ is not affine, then we can not define
$\norm{v}_{C^k}$. Of course, in a compact subset of $W$, one
could choose charts to define such a norm, but it would depend
on the choice of the charts -- the equivalence class of the
$C^k$ norm is well defined, but the $C^k$ norm itself is not.
Further on, we will need to control constants precisely, and it
will be very important for us to have a canonical norm.

Consider now an admissible measure $\tilde\mu$, supported on a
manifold $X$. Since the local unstable manifolds $W^u(x)$ are
affine manifolds, the previous discussion applies to them.

The next proposition constructs good partitions of unity on
pieces of such unstable manifolds.
\begin{prop}
\label{prop_partition}
There exists a constant $C$ with the following property. Let
$W$ be a compact subset of an unstable leaf $W^u(x)$. Then
there exist finitely many $C^\infty$ functions
$(\partition_i)_{i\in I}$ on $W^u(x)$, taking values in
$[0,1]$, with $\sum \partition_i=1$ on $W$,
$\sum\partition_i=0$ outside of $\{y\in W^u(x) \st d_{W^u}(y,W)
\leq 1/200\}$, and each $\partition_i$ is supported in a ball
$W^u_{1/200}(x_i)$ for some $x_i \in W$. Moreover, we can ensure that
$c_k(\partition_i) \leq C (k!)^2$, and every point of $W^u(x)$
belongs to at most $C$ sets $W^u_{1/200}(x_i)$.
\end{prop}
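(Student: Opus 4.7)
The plan is to combine three standard ingredients: a bounded-multiplicity cover of $W$ inside the affine unstable leaf, Gevrey-$2$ bump functions pulled back from the linear chart $\Psi_{x_i}$ of Proposition~\ref{Psigood}, and a telescoping assembly that preserves the $(k!)^2$-type bounds.

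\emph{Step 1 (Bounded-multiplicity cover).} I would extract from $W$ a maximal subset $(x_i)_{i\in I}$ that is $1/400$-separated for $d_{W^u}$. By maximality the balls $W^u_{1/400}(x_i)$ cover $W$. To bound the multiplicity, I fix a point $y \in W^u(x)$ and consider those $x_i$ with $y \in W^u_{1/200}(x_i)$: they all lie in $W^u_{1/200}(y)$, are mutually $1/400$-separated, and via the chart $\Psi_y$ — well-defined on $B(0,1/2) \subset (E^u(y),\norm{\cdot}_y)$ by Proposition~\ref{Psigood} — correspond to points of $E^u(y)$ that, by the distortion bound \eqref{comparable_norms}, are $1/1600$-separated inside a ball of radius $1/25$. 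A volume packing estimate in the $d_u$-dimensional normed space $(E^u(y),\norm{\cdot}_y)$ bounds their number by a constant $C_0$ depending only on $d_u$, hence only on the stratum.

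\emph{Step 2 (Bumps in the affine chart).} I would fix once for all a one-variable compactly supported Gevrey-$2$ bump $\varphi : \R \to [0,1]$ with $\varphi \equiv 1$ on $[-1/400, 1/400]$, $\varphi \equiv 0$ outside $(-1/200, 1/200)$, and $|\varphi^{(k)}(t)| \le A(k!)^2$ for all $k$ and $t$; such a function is classical. For each $i$, I fix an Auerbach basis of $(E^u(x_i),\norm{\cdot}_{x_i})$, giving coordinates $(v_1,\dots,v_{d_u})$ with $\max_j|v_j| \le \norm{v}_{x_i} \le d_u \max_j|v_j|$, and set
\[
\phi_i \;=\; \Bigl( \prod_{j=1}^{d_u} \varphi(d_u\,v_j) \Bigr) \circ \Psi_{x_i}^{-1},
\]
extended by zero outside the image of $\Psi_{x_i}|_{B(0,1/2)}$. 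After adjusting numerical constants in Step~1 so that $W^u_{1/400}(x_i) \subset \{\phi_i = 1\}$, one checks that $\phi_i$ is supported in $W^u_{1/200}(x_i)$. Since $\Psi_{x_i}$ is \emph{affine}, the derivatives of $\phi_i$ on $W^u$ pull back to the usual derivatives of the product above on $E^u(x_i)$; the norm comparison \eqref{comparable_norms} between $\norm{\cdot}_y$ and $\norm{\cdot}_{x_i}$ on the support of $\phi_i$ contributes only a factor $2^k$, which is absorbed into the Gevrey-$2$ constant. A Leibniz computation for the $d_u$-fold product yields $c_k(\phi_i) \le C_1(k!)^2$.

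\emph{Step 3 (Telescoping into a partition).} Order the indices arbitrarily and set
\[
\partition_i \;=\; \phi_i \prod_{j < i}(1 - \phi_j).
\]
The telescoping identity $\sum_i \partition_i = 1 - \prod_i(1-\phi_i)$ gives $\sum_i \partition_i \equiv 1$ on $W$, since every point of $W$ lies in some $W^u_{1/400}(x_i)$ on which $\phi_i = 1$. The support of $\partition_i$ lies inside that of $\phi_i$, hence in $W^u_{1/200}(x_i)$, and all $\partition_i$ vanish outside the $1/200$-neighborhood of $W$. The bounded multiplicity from Step~1 ensures that near any fixed point at most $C_0$ of the factors $(1-\phi_j)$ are non-constant, so locally $\partition_i$ is a product of at most $C_0 + 1$ Gevrey-$2$ factors; a final Leibniz expansion gives $c_k(\partition_i) \le C(k!)^2$ with $C$ depending only on $d_u$ and $A$.

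\emph{Main obstacle.} The delicate point is propagating the $(k!)^2$-type bound through the three successive product operations (the $d_u$-fold product in Step~2 and the $(C_0+1)$-fold product in Step~3) and the composition with the chart $\Psi_{x_i}$, while also correcting for the fact that the Finsler norm $\norm{\cdot}_y$ varies with $y$ and is itself non-smooth. All three manipulations preserve the Gevrey-$2$ class, but one must check that the combinatorial factors from Leibniz — together with the $2^k$ distortion from \eqref{comparable_norms} — can be absorbed into a single constant $C$ of the stated form, which works precisely because the number of factors in each product is bounded uniformly by the stratum-dependent constants $d_u$ and $C_0$.
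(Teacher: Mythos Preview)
Your construction is correct in outline and genuinely different from the paper's. The paper does not build the partition by hand: it observes that Proposition~\ref{Psigood} says exactly that $x\mapsto\norm{\cdot}_x$ is \emph{slowly varying} in the sense of \cite[Definition~1.4.7]{hormander}, and then invokes \cite[Theorem~1.4.10]{hormander} (with the sequence $d_k=c/k^{3/2}$) as a black box. That theorem directly outputs the partition with $c_k(\partition_i)\le C^k(k!)^{3/2}$, and since $C^k\le C'(k!)^{1/2}$ this yields the stated $C'(k!)^2$. Your route via a maximal separated net, Auerbach-coordinate product bumps and telescoping is more explicit and is essentially how one proves H\"ormander's theorem in this setting; the only care needed is converting between the fixed norm $\norm{\cdot}_{x_i}$ and the Finsler distance $d_{W^u}$, which Proposition~\ref{Psigood} handles exactly as you say.

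There is, however, one concrete slip. You assert that the factor $2^k$ from the norm comparison~\eqref{comparable_norms} ``is absorbed into the Gevrey-$2$ constant''; it is not. If the one-variable seed satisfies $|\varphi^{(k)}|\le A(k!)^2$, then after the rescaling $v_j\mapsto d_u v_j$ (which contributes $(d_u)^k$) and the change of norm (which contributes $2^k$), the honest bound is
\[
c_k(\partition_i)\;\le\;C\,R^k\,(k!)^2
\]
for some $R>1$ depending on $d_u$, and $R^k$ is not bounded by a constant. (The Leibniz steps themselves are harmless: since $\sum_{j=0}^k\binom{k}{j}^{-1}$ is bounded, a product of a bounded number of Gevrey-$2$ factors stays Gevrey-$2$ with only a multiplicative constant; the exponential loss comes solely from the rescaling and the norm distortion.) The fix is the one the paper makes implicitly through H\"ormander: take the seed of Gevrey order strictly below $2$, say $|\varphi^{(k)}|\le A(k!)^{3/2}$, which exists for any order $>1$. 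Your construction then gives $c_k(\partition_i)\le C R^k(k!)^{3/2}$, and since $R^k/(k!)^{1/2}\to 0$ there is a single $C'$ with $c_k(\partition_i)\le C'(k!)^2$. With this one-line change your argument is complete.
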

The precise bound $C(k!)^2$ is not important for the
applications we have in mind, what really matters is that we
have a bound depending only on $k$, uniform in $x$.
\begin{proof}
By Proposition \ref{Psigood}, the norm $\norm{\cdot}_x$ is slowly varying in the
sense of \cite[Definition 1.4.7]{hormander}. Applying Theorem 1.4.10
there to the sequence $d_k=c/k^{3/2}$ for some $c>0$, we get a
sequence of functions $\partition_i$ satisfying the conclusion of our
proposition: they satisfy $c_k(\partition_i) \leq C^k (k!)^{3/2}$ for
a constant $C$ depending only on the dimension, so
$c_k(\partition_i)\leq C' (k!)^2$, and moreover the assertions on the
support are also satisfied. One should only be a little careful since
the supports in \cite[Theorem 1.4.10]{hormander} are controlled in
terms of fixed norms $\norm{\cdot}_x$, while our conclusion deals with the
Finsler metric $d_{W^u}$. Since Proposition \ref{Psigood} shows that they
are uniformly equivalent in small neighborhoods of the points, this
is not an issue.
\end{proof}

The next lemma is a particular case of Proposition
\ref{prop_partition} (obtained by letting $W=W^u_{1/200}(x)$), and
will be needed later on.
\begin{lem}
\label{lem_cutonW}
There exists a constant $C$ with the following property. For
any $x\in X$, there exists a function $\partition$ on $W^u(x)$,
supported in $W^u_{1/100}(x)$, taking values in $[0,1]$, equal
to $1$ on $W^u_{1/200}(x)$, with $c_k(\partition) \leq C
(k!)^2$.
\end{lem}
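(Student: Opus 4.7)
The plan is to deduce the lemma directly from Proposition \ref{prop_partition} applied with $W = W^u_{1/200}(x)$. I would set $\partition \coloneqq \sum_i \partition_i$, where $(\partition_i)_{i \in I}$ is the family produced by Proposition \ref{prop_partition}, and then check the four requirements: support inside $W^u_{1/100}(x)$, values in $[0,1]$, equality to $1$ on $W^u_{1/200}(x)$, and the estimate $c_k(\partition) \leq C(k!)^2$.

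Three of these will be immediate. Since $\sum_i \partition_i = 1$ on $W$, the function $\partition$ equals $1$ on $W^u_{1/200}(x)$. For the support, each $\partition_i$ is supported in a ball $W^u_{1/200}(x_i)$ with $x_i \in W^u_{1/200}(x)$, so any $y$ in $\supp \partition_i$ satisfies
\[
  d_{W^u}(y,x) \leq d_{W^u}(y,x_i) + d_{W^u}(x_i,x) \leq 1/200 + 1/200 = 1/100,
\]
yielding $\supp \partition \subset W^u_{1/100}(x)$. For the $C^k$ estimate, the bounded-overlap assertion of Proposition \ref{prop_partition} ensures that at every point $y$ at most $C$ of the $\partition_i$ are nonzero, so
\[
  |D^k \partition(y; v_1,\dots,v_k)| \leq C \cdot \max_i c_k(\partition_i) \leq C \cdot C(k!)^2
\]
for unit vectors $v_j$, giving $c_k(\partition) \leq C'(k!)^2$.

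The only point requiring some care is the upper bound $\partition \leq 1$, since a naive sum of partition functions could a priori exceed $1$ in overlap regions. I expect the H\"ormander-style construction underlying Proposition \ref{prop_partition} to already normalize the $\partition_i$ so that $\sum_i \partition_i \leq 1$ everywhere, in which case we are done. If this is not immediate from the cited construction, a construction-free remedy is to post-compose with a fixed smooth $\chi : \R \to [0,1]$ satisfying $\chi(t) = 0$ for $t \leq 0$ and $\chi(t) = 1$ for $t \geq 1$, and to work with $\chi \circ \partition$ instead: the Fa\`a di Bruno formula, together with the stability of the bound $c_k \leq C(k!)^2$ under composition with such a fixed $\chi$, preserves the $C^k$ estimate. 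I do not anticipate any real obstacle, since the substantive content of the argument already sits in Proposition \ref{prop_partition}.
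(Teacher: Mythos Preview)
Your approach is correct and is exactly what the paper does: it states that the lemma is a particular case of Proposition~\ref{prop_partition} obtained by taking $W=W^u_{1/200}(x)$ and setting $\partition=\sum_i \partition_i$. Your one point of hesitation, the upper bound $\partition\leq 1$, is indeed not explicit in the statement of Proposition~\ref{prop_partition}, but it follows from the underlying H\"ormander construction (Theorem~1.4.10 in \cite{hormander}): one first builds a partition of unity with $\sum_j\phi_j\equiv 1$ on all of $W^u(x)$, and then keeps only those $\phi_j$ whose support meets $W$, so the partial sum is automatically $\leq 1$ everywhere; your composition-with-$\chi$ fix is a valid alternative.
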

The interest of this lemma is, again, that the estimates are
uniform in $x$ while this point lives in a noncompact space.

In the next statement, we do not use the distance induced by
the Finsler metric on unstable manifolds, but the global
distance. Since the previous arguments only rely on Proposition
\ref{norm_slowly_varies}, which is satisfied in $W^u$ as well
as in the whole space, this lemma follows again from the same
techniques.
\begin{lem}
\label{lem_phiV}
There exists a constant $C$ with the following property. Let $F :
\Teich \to [1,\infty)$ be a function such that $|\log F(x)-\log
F(y)|\leq 2 d(x,y)$ for any $x,y\in \Teich$. For any $V\geq 1$, there
exists a $C^\infty$ function $\partition_V : \Teich \to [0,1]$ such
that $\partition_V(x) = 1$ if $F(x)\leq V$ and $\partition_V(x)=0$ if
$F(x) \geq 2V$, satisfying $c_k(\partition_V) \leq C (k!)^2$.
\end{lem}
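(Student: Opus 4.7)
The plan is to build $\partition_V$ from a single smooth partition of unity on $\Teich$ produced by the same H\"ormander-type construction used in Proposition \ref{prop_partition}. The crucial point, highlighted in the remark immediately preceding the lemma, is that Proposition \ref{norm_slowly_varies} holds for arbitrary $C^1$ paths in $\Teich$, not just for paths in unstable leaves, so the family of Finsler norms $\norm{\cdot}_x$ (read in the affine coordinates given by the period map $\Phi$) is slowly varying in the sense of \cite[Definition 1.4.7]{hormander} uniformly on the whole of $\Teich$. Applying \cite[Theorem 1.4.10]{hormander} with $d_k = c/k^{3/2}$ therefore yields a smooth partition of unity $\{\partition_i\}_{i\in I}$ of $\Teich$ with each $\partition_i$ supported in a Finsler ball $B(x_i, r)$ of some fixed small radius $r$, with a uniformly bounded number of overlaps at every point, and $c_k(\partition_i) \leq C(k!)^2$ for a constant $C$ independent of $i$ and $k$.

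Given such a partition, set
\[
\partition_V(x) \coloneqq \sum_{i \,:\, F(x_i) \leq c_0 V} \partition_i(x)
\]
for a threshold $c_0 \in (1,2)$ to be chosen. The hypothesis that $\log F$ is $2$-Lipschitz gives $e^{-2r} F(x_i) \leq F(x) \leq e^{2r} F(x_i)$ whenever $d(x, x_i) \leq r$. Choose $r$ small enough that $e^{2r} < c_0 < 2 e^{-2r}$ (for instance $r = 1/200$, $c_0 = 3/2$). If $F(x) \leq V$, then every $i$ with $x \in \supp(\partition_i)$ satisfies $F(x_i) \leq e^{2r} V < c_0 V$, so all such terms are included in the sum and $\partition_V(x) = \sum_i \partition_i(x) = 1$; symmetrically, if $F(x) \geq 2V$ then every such $i$ satisfies $F(x_i) \geq 2 e^{-2r} V > c_0 V$, so no $\partition_i$ contributes and $\partition_V(x) = 0$. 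The $C^\infty$ regularity and the bound $c_k(\partition_V) \leq C'(k!)^2$ follow at once from the bound on each $\partition_i$ together with the bounded overlap, which caps the number of nonzero summands at any given point by an absolute constant.

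The only potentially delicate step is checking that the H\"ormander machinery really applies globally on the non-compact space $\Teich$; but this is precisely what Proposition \ref{norm_slowly_varies} buys us, exactly as in the proof of Proposition \ref{prop_partition}. The remaining work—the choice of $r$ and $c_0$, the verification of the two support conditions, and the uniform $C^k$ estimate—is a direct consequence of the $2$-Lipschitz hypothesis on $\log F$ and the slow variation of the Finsler norm, so no new ingredient is required beyond what already underlies Proposition \ref{prop_partition} and Lemma \ref{lem_cutonW}.
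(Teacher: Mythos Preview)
Your proposal is correct and follows precisely the route the paper intends: the remark before Lemma~\ref{lem_phiV} says the result ``follows again from the same techniques'' as Proposition~\ref{prop_partition}, namely H\"ormander's slowly-varying partition of unity, now applied on all of $\Teich$ via Proposition~\ref{norm_slowly_varies}. Your explicit choice of threshold $c_0$ and the verification of the two support conditions using the $2$-Lipschitz hypothesis on $\log F$ supply exactly the details the paper omits.
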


\section{Recurrence estimates}
\label{section_recurrence}

Our goal in this section is to prove the following exponential
recurrence estimate. Consider an admissible measure $\tilde\mu$
with its affine local product structure, supported on a
submanifold $X$. If $x$ is a translation surface, let $\sys(x)$
be its systole, i.e., the length of the shortest saddle
connection in $x$.
\begin{prop}
\label{prop_Vdelta_OK_sur_Wu}
Let $\delta\in (0,1/4)$. There exists $C>0$ such that, for any
$x\in X$ and any $t\geq 0$,
  \begin{equation*}
  \frac{1}{\mu_u(W^u_{1/100}(x))}\int_{W^u_{1/100}(x)} V_\delta( g_t y) \dd\mu_u(y)
  \leq C e^{-(1-2\delta) t} V_\delta(x)+C,
  \end{equation*}
where $V_\delta(x)=\max(1/\sys(x)^{1+\delta}, 1)$. Moreover,
the function $\log V_\delta$ is $(1+\delta)$-Lipschitz for the
Finsler norm of the previous section.
\end{prop}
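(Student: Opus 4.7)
The statement contains two independent claims. The Lipschitz property of $\log V_\delta$ is direct and largely subsumed by the techniques of Subsection~\ref{subsec_Finsler}; the main integral estimate is the core content and amounts to reformulating the Eskin--Masur bound of \cite{eskin_masur} as an average over pieces of unstable leaves.

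For the Lipschitz bound I would fix $x, y \in \Teich$ joined by a $C^1$ path $\paths$ and pick a saddle connection $\gamma_0$ of $x$ realizing $\sys(x)$. Applying Lemma~\ref{when_survives} to $\gamma_0$ while it survives along $\paths$, and using the splitting identity $|\paths(t_{n+1})(\tilde\gamma)| = \sum|\paths(t_{n+1})(\gamma_i)|$ exactly as in the proof of Proposition~\ref{norm_slowly_varies} in case $\gamma_0$ splits, one gets $\sys(y) \leq e^{d(x,y)} \sys(x)$; by symmetry $|\log\sys(y) - \log\sys(x)| \leq d(x,y)$. The identity $\log V_\delta = (1+\delta)\max(-\log\sys, 0)$ then shows that $\log V_\delta$ is $(1+\delta)$-Lipschitz.

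For the integral estimate, my plan is to take as input the horocycle-average version of the Eskin--Masur estimate from \cite{eskin_masur},
\begin{equation*}
\int_0^1 V_\delta(g_t h_r y)\dd r \leq c\, e^{-(1-2\delta)t}V_\delta(y) + C,
\end{equation*}
uniform in $y \in X$ and $t \geq 0$, and to upgrade it to an average over the $d_u$-dimensional unstable ball $W^u_{1/100}(x)$. Parametrize $W^u_{1/100}(x)$ via the affine chart $\Psi_x : B(0, r_0) \subset E^u(x) \to W^u(x)$ from Proposition~\ref{Psigood}, and decompose $E^u(x) = \R b \oplus F(x)$, where $b$ is the horocycle tangent (the imaginary part of $\Phi(x)$) and $F(x)$ is a linear complement. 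I would cover $\Psi_x^{-1}(W^u_{1/100}(x))$ by a finite family of boxes of the form $\{s b + z \st s \in I,\ z \in J\}$, where $I$ is a segment corresponding to a horocycle arc of Finsler length at most $1$ and $J$ ranges over a net in $F(x)$; the number of such boxes is controlled uniformly in $x$ by the slowly-varying norm estimate of Proposition~\ref{Psigood} together with the doubling property of $\mu_u$ from Corollary~\ref{cor_doubling}. On each box, Fubini reduces the integral to a family of horocycle averages based at points $\Psi_x(z)$ at Finsler distance at most $1/100$ from $x$; the Eskin--Masur input bounds each such average by $c\,e^{-(1-2\delta)t}V_\delta(\Psi_x(z)) + C$, and the Lipschitz estimate from the first part of the proof bounds $V_\delta(\Psi_x(z))$ by a uniform multiple of $V_\delta(x)$. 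Summing the contributions and normalizing by $\mu_u(W^u_{1/100}(x))$ yields the claimed inequality.

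The main obstacle will be uniformity in $x$ of the covering: $\norm{b}_x$ is bounded above by $1$ (Lemma~\ref{SL_lipschitz}) but not below, so in thin parts of moduli space a unit-length horocycle arc starting at $x$ can extend much further along $W^u(x)$ than the radius $1/100$ of the ball under consideration. Reconciling the unit-length horocycle input with a $W^u_{1/100}$ integral requires combining the slowly varying norm of Subsection~\ref{subsec_Finsler} with the commutation $g_t h_r = h_{e^{2t} r} g_t$, and choosing the integration window so that the horocycle estimate applies with uniformly bounded constants. All other ingredients (Fubini, Lipschitz control of $V_\delta$, doubling of $\mu_u$) are then routine.
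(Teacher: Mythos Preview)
Your Lipschitz argument for $\log V_\delta$ is correct and matches the paper, which simply cites \cite[Lemma~2.12]{AGY_teich} for the $1$-Lipschitz property of $\log\sys$ and deduces the $(1+\delta)$-Lipschitz bound as you do.

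For the integral estimate you have the right input (the horocycle-average version of Eskin--Masur, Lemma~\ref{lemme_alltimes}), but your execution plan has exactly the gap you yourself flag and do not close. A direct Fubini decomposition of $W^u_{1/100}(x)$ along $\R b \oplus F(x)$ requires controlling, uniformly in $x$, both the number of horocycle boxes and the boundary effects from partial segments; your appeal to the commutation $g_t h_r = h_{e^{2t}r} g_t$ and an unspecified ``choice of integration window'' does not yield this. Whether or not the obstacle is ultimately fatal, the argument as written is incomplete.

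The paper sidesteps the whole issue with a one-line trick you are missing: it uses the $h_r$-invariance of $\mu_u$ (a consequence of $\mu_u$ being constant along $W^u$ together with the fact that $h_r$ acts as translation by $rb$ in the affine chart). For any fixed $r\in[0,1/100]$ the change of variables $z=h_{-r}y$ gives
\[
\int_{W^u_{1/100}(x)}V_\delta(g_t y)\,d\mu_u(y)
=\int_{h_{-r}W^u_{1/100}(x)}V_\delta(g_t h_r z)\,d\mu_u(z)
\le \int_{W^u_{1/50}(x)}V_\delta(g_t h_r z)\,d\mu_u(z),
\]
since $d_{W^u}(y,h_{-r}y)\le |r|$ by Lemma~\ref{SL_lipschitz}. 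Now average this inequality over $r\in[0,1/100]$ and swap the order of integration: for each fixed $z\in W^u_{1/50}(x)$ one obtains the integral $\int_0^{1} V_\delta(g_t h_r z)\,dr$, to which Lemma~\ref{lemme_alltimes} applies directly with uniform constants. The Lipschitz bound on $\log V_\delta$ then replaces $V_\delta(z)$ by $V_\delta(x)$, and Corollary~\ref{cor_doubling} controls the ratio $\mu_u(W^u_{1/50}(x))/\mu_u(W^u_{1/100}(x))$. There is no covering, no boxes, no window to choose: translating and averaging makes the horocycle integral appear for free, and every constant is manifestly uniform in $x$.
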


We will use the following lemma, which is due to Eskin-Masur
\cite{eskin_masur} and Athreya \cite{athreya}.
\begin{lem}
Fix a neighborhood $\mathcal V$ of the identity in $\SL$.
For every $\delta>0$, there exists $C>0$ such that, for all
$t>0$, there exist a function $V^{(t)}_\delta : \Teich \to
[1,\infty)$ and a scalar $b(t)>0$ satisfying the following
property. For all $x\in \Teich_1$,
  \begin{equation*}
  \int_0^{2\pi} V_\delta^{(t)}(g_t k_\theta x) \dd \theta
  \leq C e^{-(1-\delta)t} V_\delta^{(t)}(x) + b(t).
  \end{equation*}
Moreover,
  \begin{equation}
  \label{Vdeltasmooth}
  V_\delta^{(t)}(g x) \leq C V_\delta^{(t)}(x)
  \end{equation}
for all $x\in \Teich$ and all $g\in \mathcal V$.
Finally, there exists a constant
$C_{\delta,t}$ such that $V_\delta^{(t)}/ V_\delta\in
[C_{\delta,t}^{-1}, C_{\delta,t}]$.
\end{lem}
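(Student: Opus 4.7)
The plan is to bootstrap the statement from the by-now-classical integral estimate of Eskin--Masur \cite{eskin_masur}, as refined by Athreya \cite{athreya}. Their work provides a function $\tilde V_\delta : \Teich \to [1,\infty)$, comparable up to multiplicative constants to $\max(\sys(x)^{-(1+\delta)}, 1)$, and satisfying
\begin{equation*}
\int_0^{2\pi} \tilde V_\delta(g_t k_\theta x) \dd\theta \leq C' e^{-(1-\delta)t}\tilde V_\delta(x) + b_0,
\end{equation*}
with $C'$, $b_0$ depending only on $\delta$. This is almost what we want, except for the $\SL$-equivariance property \eqref{Vdeltasmooth}: the function $\tilde V_\delta$ is only lower semicontinuous, because $\sys$ jumps downwards whenever a new saddle connection forms, and so the raw $\tilde V_\delta$ need not be uniformly comparable to its translates by $g \in \mathcal V$.

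To remedy this I would mollify in the $\SL$-direction. Fix an open neighborhood $\mathcal W$ of the identity containing $\mathcal V \cdot \mathcal V$ and with compact closure, and set
\begin{equation*}
V_\delta^{(t)}(x) := \sup_{g \in \mathcal W} \tilde V_\delta(gx).
\end{equation*}
The bound \eqref{Vdeltasmooth} is then immediate: for $g \in \mathcal V$,
\begin{equation*}
V_\delta^{(t)}(gx) = \sup_{h \in \mathcal W} \tilde V_\delta(hgx) \leq \sup_{h' \in \mathcal W \cdot \mathcal V} \tilde V_\delta(h'x) \leq C\, V_\delta^{(t)}(x),
\end{equation*}
where the last inequality uses the uniform distortion bound $\tilde V_\delta(h'x) \leq C\, \tilde V_\delta(x)$ for $h'$ in any fixed bounded set of $\SL$, a standard consequence of the bi-Lipschitz behavior of $\log \sys$ along $\SL$-orbits of bounded length. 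The comparability $V_\delta^{(t)}/V_\delta \in [C_{\delta,t}^{-1}, C_{\delta,t}]$ is also immediate since $\mathcal W$ is a fixed bounded set.

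To derive the integral inequality for $V_\delta^{(t)}$, I would start from
\begin{equation*}
\int_0^{2\pi} V_\delta^{(t)}(g_t k_\theta x) \dd\theta = \int_0^{2\pi} \sup_{h \in \mathcal W} \tilde V_\delta(h g_t k_\theta x) \dd\theta.
\end{equation*}
Writing $h g_t = g_t (g_{-t} h g_t)$, the conjugated element lies in an enlarged, $t$-dependent neighborhood; nevertheless $\tilde V_\delta(h g_t k_\theta x)$ can be bounded by a product of a $t$-dependent distortion factor and $\tilde V_\delta(g_t k_\theta x)$ (using once more the $\SL$-covariance of the systole). Integrating over $\theta$ and invoking the Eskin--Masur--Athreya inequality yields the desired estimate, with the uniform constant $C$ on the main term (if necessary after replacing $\delta$ by a slightly smaller value and renaming) and an error $b(t)$ absorbing the $t$-dependent prefactors produced by the conjugation.

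The main obstacle is precisely this conjugation step: the sup over $\mathcal W$ inevitably interacts with the $g_t$-averaging, and the conjugation $g_{-t} h g_t$ can stretch components of $h$ by a factor as large as $e^{2t}$. The saving grace is that $\sys$, and hence $\tilde V_\delta$, degrades only polynomially along short $\SL$-orbits, so these distortions contribute at worst a polynomial factor in $e^t$ to the error, which is absorbed harmlessly into $b(t)$ while leaving the sharp exponent $1 - \delta$ intact on the main term.
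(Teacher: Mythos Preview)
The paper does not prove this lemma: it simply attributes it to Eskin--Masur and Athreya, noting only that the order of quantifiers has been corrected from Athreya's Lemma~2.10. So there is no ``paper's own proof'' to compare against; the substance lies entirely in the cited references.

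Your attempted derivation is circular. The input you assume --- a single $t$-independent function $\tilde V_\delta$ comparable to $\max(\sys^{-(1+\delta)},1)$ and satisfying the circle-average inequality with \emph{$t$-independent} constants $C',b_0$ --- is already stronger than the lemma. Indeed, once $\tilde V_\delta$ is comparable to $\sys^{-(1+\delta)}$, the distortion bound \eqref{Vdeltasmooth} holds for $\tilde V_\delta$ itself, by the very ``bi-Lipschitz behavior of $\log\sys$ along $\SL$-orbits of bounded length'' that you invoke two lines later; no mollification is needed, and you could set $V_\delta^{(t)}=\tilde V_\delta$ for every $t$. (Note also that your mollified function does not depend on $t$ at all, so the superscript is vacuous.) The actual Eskin--Masur/Athreya construction produces functions whose combinatorial parameters (which saddle-connection complexes to count, with what weights) are chosen \emph{depending on $t$}; that is precisely why the lemma allows $V_\delta^{(t)}$ and the comparison constants $C_{\delta,t}$ to depend on $t$. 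The real work is in that construction, which you have assumed away.

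Separately, the conjugation step you sketch at the end would fail if it were actually needed. Writing $hg_t=g_t(g_{-t}hg_t)$ produces an element whose operator norm is of order $e^{2t}$, hence a distortion factor $D(t)\sim e^{2(1+\delta)t}$; this multiplies the \emph{main} term $C'e^{-(1-\delta)t}\tilde V_\delta(x)$ as well as the additive error, and $D(t)e^{-(1-\delta)t}\to\infty$. The claim that the distortion is ``absorbed harmlessly into $b(t)$ while leaving the sharp exponent intact'' is therefore incorrect. Fortunately, as noted above, the step is unnecessary: $\sup_{h\in\mathcal W}\tilde V_\delta(hy)\le C\tilde V_\delta(y)$ holds with $C$ depending only on $\mathcal W$, directly from the bounded distortion of $\sys$, with no conjugation required.
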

The order of quantifiers in our statement corrects a mistake in
Athreya's Lemma 2.10.

In the next lemma, we transfer the previous estimate on circle
averages to estimates on horocycle averages.
\begin{lem}
\label{interm}
For every $\delta>0$, there exists $C$ such that, for any large
enough $t$, there exists $b(t)>0$ such that, for any $x\in
\Teich_1$,
  \begin{equation*}
  \int_0^1 V_\delta^{(t)}( g_t h_r x) \dd r \leq C e^{-(1-\delta) t}
V^{(t)}_\delta(x)+b(t)\, .
  \end{equation*}
\end{lem}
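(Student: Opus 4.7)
The plan is to reduce the horocycle average to the circle average of the previous lemma via the Cartan ($KAK$) decomposition of $g_t h_r$. First I would compute this decomposition explicitly: diagonalizing $M_r^{T} M_r$ with $M_r \coloneqq g_t h_r = \smat{e^t}{e^t r}{0}{e^{-t}}$, the singular values $\sigma_\pm$ satisfy $\sigma_+\sigma_- = 1$ and $\sigma_+^{2}+\sigma_-^{2} = e^{2t}(1+r^{2})+e^{-2t}$, so that writing $M_r = k_{\alpha(r,t)}\, g_{s(r,t)}\, k_{\beta(r,t)}$ gives, uniformly for $r \in [0,1]$ and $t$ large,
\begin{equation*}
s(r,t) = t + \tfrac{1}{2}\log(1+r^{2}) + O(e^{-4t}), \quad \beta(r,t) = \arctan(r) + O(e^{-4t}), \quad \alpha(r,t) = O(e^{-2t}).
\end{equation*}
In particular, $r \mapsto \beta(r,t)$ is a diffeomorphism from $[0,1]$ onto a subinterval of $[0, \pi/3]$ with Jacobian bounded above and below by constants independent of large $t$.

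Next, I would use the regularity property \eqref{Vdeltasmooth} to turn this decomposition into a pointwise bound. Factor $M_r = (k_{\alpha(r,t)}\, g_{s(r,t)-t}) \cdot (g_t k_{\beta(r,t)})$; the prefactor $k_{\alpha(r,t)}\, g_{s(r,t)-t}$ stays in a fixed compact subset $K_0 \subset \SL$, independent of $r \in [0,1]$ and of large $t$. Covering $K_0$ by finitely many translates of the neighborhood $\mathcal V$ and iterating \eqref{Vdeltasmooth} (which is possible since $\mathcal V$ generates $\SL$) yields a constant $C_0$, depending only on $K_0$, $\mathcal V$, $\delta$, such that $V_\delta^{(t)}(g_t h_r x) \leq C_0\, V_\delta^{(t)}(g_t k_{\beta(r,t)} x)$ for all $r \in [0,1]$ and $x \in \Teich$.

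Finally, integrating this bound over $r \in [0,1]$ and changing variables to $\beta$, using the uniform Jacobian bound and the fact that $V_\delta^{(t)} \geq 0$ to enlarge the $\beta$-range to $[0, 2\pi]$, one obtains
\begin{equation*}
\int_{0}^{1} V_\delta^{(t)}(g_t h_r x) \dd r \;\leq\; C_1 \int_{0}^{2\pi} V_\delta^{(t)}(g_t k_\beta x) \dd\beta,
\end{equation*}
at which point the circle average estimate from the previous lemma supplies the required bound $C e^{-(1-\delta)t} V_\delta^{(t)}(x) + b(t)$ (with $b(t)$ possibly enlarged).

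\emph{Main obstacle.} The essential point is verifying the asymptotic expansion of the Cartan decomposition with the stated uniformity in $r \in [0,1]$ and large $t$; it is what makes both the change of variables and the application of \eqref{Vdeltasmooth} on a $t$-independent compact neighborhood work. Once that is in place, the remaining steps are routine.
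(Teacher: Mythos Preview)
Your proof is correct. You use the Cartan ($KAK$) decomposition of $g_t h_r$ directly, whereas the paper instead uses an $A\tilde N K$ (Iwasawa-type) decomposition of $h_r$ alone: writing $h_r = g_{\tau(r)}\,\tilde h_{\tilde r(r)}\,k_{\theta(r)}$ and then commuting $\tilde h$ past $g_{t+\tau}$ to obtain $g_t h_r = \tilde h_{e^{-2(t+\tau)}\tilde r}\, g_\tau\, g_t\, k_\theta$, with the prefactor $\tilde h_{e^{-2(t+\tau)}\tilde r}\, g_\tau$ lying in a fixed neighborhood of the identity. Since $\theta'(0)\neq 0$ but $r\mapsto\theta(r)$ is only guaranteed to be a diffeomorphism near $r=0$, the paper works on a short interval $[0,1/n]$ and then splits $[0,1]$ into $n$ translates $h_{j/n}\cdot[0,1/n]$, invoking \eqref{Vdeltasmooth} once more to absorb the shifts $h_{j/n}$. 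Your route avoids this splitting step at the price of computing the large-$t$ asymptotics of the Cartan factors explicitly; the paper's route avoids that asymptotic computation by decomposing $h_r$ in a $t$-independent way, but must then localize near $r=0$. Both approaches reduce to the circle-average lemma in the same way and both rely on iterating \eqref{Vdeltasmooth} over a fixed compact set of prefactors.
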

\begin{proof}
Using the decomposition $ANK$ of $\SL$, we can write uniquely
$h_r=g_{\tau(r)} \tilde h_{\tilde r(r)} k_{\theta(r)}$, where
the functions $\tau$, $\tilde r$ and $\theta$ depend smoothly
on $r$. One easily checks that $\theta'(0)\not=0$. In
particular, if $n$ is large enough, $r\mapsto \theta(r)$ is a
diffeomorphism on $[0,1/n]$. Using the commutation relation
$g_\tau \tilde h_{\tilde r}=\tilde h_{e^{-2\tau} \tilde{r}}
g_\tau$, we get
  \begin{align*}
  \int_0^{1/n} V_\delta^{(t)}(g_t h_r x)\dd r
  &
  =\int_0^{1/n} V_\delta^{(t)}(g_t g_\tau \tilde h_{\tilde r} k_\theta x)\dd r
  \\&
  =\int_0^{1/n} V_\delta^{(t)}(\tilde h_{\tilde r e^{-2(t+\tau)}} g_\tau g_t  k_\theta x) \dd r.
  \end{align*}
By \eqref{Vdeltasmooth}, this is bounded by
  \begin{align*}
  C \int_0^{1/n} V_\delta^{(t)}( g_t k_{\theta} x) \dd r
  &
  = C \int_{\theta([0,1/n])}V_\delta^{(t)}( g_t k_u x) (\theta^{-1})'(u) \dd u
  \\&
  \leq C \int_0^{2\pi} V_\delta^{(t)}(g_t k_u x) \dd u
  \\&
  \leq C e^{-(1-\delta) t} V_\delta^{(t)}(x)+b(t).
  \end{align*}
Therefore,
  \begin{align*}
  \int_0^1 V_\delta^{(t)}(g_t h_r x) \dd r
  &=\sum_{j=0}^{n-1} \int_0^{1/n} V_\delta^{(t)}(g_t h_r h_{j/n} x) \dd r
  \\
  \leq \sum_{j=0}^{n-1} C e^{-(1-\delta) t} V_\delta^{(t)}(h_{j/n} x)+b(t).
  \end{align*}
With \eqref{Vdeltasmooth}, this gives the conclusion of the
lemma.
\end{proof}

\begin{lem}
\label{lemme_alltimes}
For every $\delta>0$,
there exist $C$ and $\tau$ such that, for any $t\geq 0$ and any $x\in
\Teich_1$,
  \begin{equation}
  \label{Vdeltg_alltimes}
  \int_0^1 V_\delta^{(\tau)}( g_t h_r x) \dd r \leq C e^{-(1-2\delta) t} V_\delta^{(\tau)}(x)+C\, .
  \end{equation}
\end{lem}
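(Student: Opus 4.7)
The plan is to fix a sufficiently large $\tau$ (depending on $\delta$), prove the estimate at integer multiples of $\tau$ by induction, and then interpolate to all times using that $g_s$ remains in a bounded subset of $\SL$ for $s\in[0,\tau]$. Setting $U(t,x) = \int_0^1 V_\delta^{(\tau)}(g_t h_r x)\dd r$, the goal of the induction is
$$U(n\tau,x) \leq A_n V_\delta^{(\tau)}(x) + B_n,\quad A_n \leq C e^{-(1-2\delta)n\tau},\ \sup_n B_n < \infty.$$

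The key inductive step uses the commutation $g_t h_r = h_{re^{2t}} g_t$: writing $g_{(n+1)\tau} h_r x = g_\tau h_{re^{2n\tau}} g_{n\tau} x$ and substituting $s = re^{2n\tau}$ gives
$$U((n+1)\tau, x) = e^{-2n\tau}\int_0^{e^{2n\tau}} V_\delta^{(\tau)}(g_\tau h_s (g_{n\tau} x))\dd s.$$
I would split this integral into unit-length subintervals $[k,k+1]$ and apply Lemma \ref{interm} at the point $h_k g_{n\tau} x$, obtaining
$$U((n+1)\tau,x) \leq C e^{-(1-\delta)\tau}\, e^{-2n\tau}\sum_{k=0}^{M-1} V_\delta^{(\tau)}(h_k g_{n\tau} x) + 2b(\tau),$$
with $M \leq e^{2n\tau}+1$. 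Since $V_\delta^{(\tau)}$ varies by a bounded factor along any horocycle arc of length one (iterating \eqref{Vdeltasmooth} finitely many times covers $\{h_r:r\in[0,1]\}$ by translates of $\mathcal V$), I would estimate $\sum_k V_\delta^{(\tau)}(h_k y) \leq C\int_0^M V_\delta^{(\tau)}(h_s y)\dd s$; undoing the substitution $s = re^{2n\tau}$ then converts this integral back to a multiple of $U(n\tau,x) + U(n\tau,h_1 x)$, yielding the recursion
$$U((n+1)\tau,x) \leq C' e^{-(1-\delta)\tau}\bigl(U(n\tau,x) + U(n\tau,h_1 x)\bigr) + 2b(\tau).$$

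The main obstacle is the appearance of the shifted term $U(n\tau, h_1 x)$, which is not directly controlled by $U(n\tau, x)$. I would handle this by applying the inductive hypothesis at $h_1 x$ as well: \eqref{Vdeltasmooth} gives $V_\delta^{(\tau)}(h_1 x) \leq C V_\delta^{(\tau)}(x)$, so $U(n\tau, h_1 x) \leq CA_n V_\delta^{(\tau)}(x) + B_n$, which plugs back into the recursion. Provided $\tau$ is chosen so large that $(1+C)C' e^{-(1-\delta)\tau} \leq e^{-(1-2\delta)\tau}$ (possible for any fixed $\delta>0$) and $2Ce^{-(1-\delta)\tau} < 1$, the $A_n$ decay at the desired rate while the $B_n$ stay bounded by geometric summation.

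For the final step, given $t\geq 0$ I would write $t = n\tau + s$ with $s\in[0,\tau)$ and use $V_\delta^{(\tau)}(g_s z) \leq C_\tau V_\delta^{(\tau)}(z)$ (which follows from \eqref{Vdeltasmooth} by covering the compact set $\{g_s:s\in[0,\tau]\}$ by finitely many translates of $\mathcal V$) to deduce $U(t,x) \leq C_\tau U(n\tau,x)$. Combined with $n\tau \geq t-\tau$ and the inductive bound at time $n\tau$, this delivers \eqref{Vdeltg_alltimes}.
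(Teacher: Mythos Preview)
Your argument is correct and follows the same overall strategy as the paper: prove the estimate at integer multiples of a large $\tau$ by induction using the commutation $g_t h_r = h_{re^{2t}} g_t$ together with Lemma~\ref{interm}, then interpolate via \eqref{Vdeltasmooth}.

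The only real difference is bookkeeping. The paper chooses $\tau$ so that $e^{2\tau}$ is an integer $N$, and instead of substituting first and chopping into unit pieces, it splits $[0,1]$ into $M=N^n$ equal subintervals $[j/M,(j+1)/M]$, rescales each to $[0,1]$, applies the base case at the point $g_{n\tau}h_{j/M}x$, and then rescales back. Because $M$ is exactly $e^{2n\tau}$, this yields the clean recursion
\[
\int_0^1 V_\delta^{(\tau)}(g_{(n+1)\tau}h_r x)\dd r \leq e^{-(1-2\delta)\tau}\int_0^1 V_\delta^{(\tau)}(g_{n\tau}h_r x)\dd r + b
\]
with no shifted term. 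Your version picks up the extra $U(n\tau,h_1 x)$ from the mismatch between $e^{2n\tau}$ and the integer number of unit intervals; you handle it correctly by applying the inductive hypothesis at $h_1 x$ and absorbing the resulting constant into the choice of $\tau$. The paper's decomposition is slightly tidier, but the content is the same.
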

The difference with the previous lemma is that we obtain a
result valid for all times, with constants independent of the
time (while $b$ depends on $t$ in the statement of Lemma
\ref{interm}).
\begin{proof}
Let us fix $\tau$ and $b$ such that, for every $x\in \Teich_1$,
  \begin{equation}
  \label{qlskdjflsmqfd}
  \int_0^1 V_\delta^{(\tau)}( g_\tau h_r x) \dd r \leq e^{-(1-2\delta)
  \tau} \int_0^1 V_\delta^{(\tau)}(h_r x)+b.
  \end{equation}
Their existence follows from Lemma \ref{interm} and
\eqref{Vdeltasmooth}. We can also assume that $e^{2\tau}$ is a
(large) integer $N$.

Let us now prove that, for all $n\in \N$,
  \begin{equation}
  \label{Vdeltg_induc}
  \int_0^1 V_\delta^{(\tau)}(g_{(n+1)\tau} h_r x)\dd r
  \leq e^{-(1-2\delta) \tau} \int_0^1 V_\delta^{(\tau)}(g_{n\tau}h_r x)\dd r+b.
  \end{equation}
A geometric series then shows \eqref{Vdeltg_alltimes} for times of
the form $n\tau$, and the general result follows from
\eqref{Vdeltasmooth}.

To prove \eqref{Vdeltg_induc}, write $g_{(n+1)\tau} h_r =
g_\tau g_{n\tau} h_r = g_\tau h_{e^{2n\tau} r} g_{n\tau}$ with
$e^{2n\tau}=N^n=M$. Then, writing $r'=Mr$,
  \begin{align*}
  \int_0^1 V_\delta^{(\tau)}(g_{(n+1)\tau} h_r x)\dd r
  &=\sum_{j=0}^{M-1} \int_0^{1/M} V_\delta^{(\tau)}( g_{(n+1) \tau} h_r h_{j/M} x)\dd r
  \\&
  =\sum_{j=0}^{M-1} \int_0^{1/M} V_\delta^{(\tau)}( g_{\tau} h_{M r} g_{n\tau} h_{j/M} x)\dd r
  \\&
  = \frac{1}{M}\sum_{j=0}^{M-1} \int_0^1 V_\delta^{(\tau)}( g_{\tau} h_{r'} g_{n\tau} h_{j/M} x)\dd r'
  \\&
  \leq \frac{1}{M} \sum_{j=0}^{M-1}\left( e^{-(1-2\delta)\tau} \int_0^1 V_\delta^{(\tau)}( h_{r'} g_{n\tau} h_{j/M} x)\dd r' +b\right),
  \end{align*}
where the last inequality follows from \eqref{qlskdjflsmqfd}
applied to the point $g_{n\tau} h_{j/M} x$. Changing again
variables in the opposite direction, we get
\eqref{Vdeltg_induc}.
\end{proof}

\begin{proof}[Proof of Proposition \ref{prop_Vdelta_OK_sur_Wu}]
The log-smoothness of $V_\delta$ readily follows from the fact
that $\log \sys$ is $1$-Lipschitz by \cite[Lemma
2.12]{AGY_teich}.

Let $\tau$ be given by Lemma \ref{lemme_alltimes}. Since $V_\delta$
is within a multiplicative constant of $V_\delta^{(\tau)}$, it also
satisfies the inequality \eqref{Vdeltg_alltimes} (with a different
constant $C$).

Fix $r \in [0,1/100]$. Since $\mu_u$ is invariant under $h_r$,
  \begin{multline*}
  \int_{W^u_{1/100}(x)} V_\delta( g_t y) \dd\mu_u(y)
  =\int_{W^u_{1/100}(x)} V_\delta( g_t h_r h_{-r} y) \dd\mu_u(y)
  \\
  =\int_{ h_{-r} W^u_{1/100}(x)} V_\delta(g_t h_r z) \dd\mu_u(z)
  \leq \int_{ W^u_{1/50}(x)} V_\delta(g_t h_r z) \dd\mu_u(z).
  \end{multline*}
Averaging over $r$, we get
  \begin{align*}
  \int_{W^u_{1/100}(x)} V_\delta( g_t y) \dd\mu_u(y)
  &
  \leq 100\int_{r=0}^{1/100}\int_{ W^u_{1/50}(x)} V_\delta(g_t h_r z) \dd\mu_u(z) \dd r
  \\&
  \leq 100 \int_{ W^u_{1/50}(x)} \left( \int_0^1 V_\delta(g_t h_r z) \dd r\right) \dd\mu_u(z).
  \end{align*}
This is bounded by $ \mu_u(W^u_{1/50}(x)) (C e^{-(1-2\delta)t}
V_\delta(x) +C)$, using \eqref{Vdeltg_alltimes} for $V_\delta$ and
the fact that $V_\delta(z)/V_\delta(x)$ is uniformly bounded for all
$z\in W^u_{1/50}(x)$ (since $\log V_\delta$ is Lipschitz). The result
follows since the measures of $W^u_{1/50}(x)$ and $W^u_{1/100}(x)$
are comparable by Corollary \ref{cor_doubling}.
\end{proof}

\section{Distributional coefficients}
\label{par_construct_norms}

In this section, we introduce a distributional norm on smooth
functions, similar in many respects to the norms introduced in
\cite{gouezel_liverani} (the differences are the control at
infinity, and the fact that we only use vector fields pointing
in the stable direction or the flow direction -- this is
simpler than the approach of \cite{gouezel_liverani}, and is
made possible here by the smooth structure of the stable
foliation). Let us fix $\tilde\mu$ an admissible measure with
its affine local product structure, supported by a manifold
$X$. Let also $\delta>0$ be a fixed small number, as in the
previous section.

Consider a smooth vector field $v^s$ on a piece of unstable manifold
$W^u_{1/100}(x)$, such that for every $y\in W^u_{1/100}(x)$,
$v^s(y)\in E^s(y)$. We can define its $c_k$ coefficients as in Remark
\ref{rem_def_norme_vecteur}. For a vector field
$v^\omega(y)=\psi(y)\omega(y)$ defined on $W^u_{1/100}(x)$, we let
its $c_k$ coefficient be $c_k(\psi)$. The definitions of
$\norm{v^s}_{C^k}$ and $\norm{v^\omega}_{C^k}$ follow. Let us stress
that these definitions only involve base points that are located on
an unstable manifold: this implies that these norms behave well under
$g_{-t}$, which is contracting along such an unstable manifold, and
is at the heart of the proof of Lemma \ref{lemme_basic_estimate}
below.

We want to use such vector fields to differentiate functions, several
times. However, the Lie derivative $L_{v_1} L_{v_2} f$ of a function
$f$ can only be defined if $L_{v_2}f$ is defined on an open set,
which means that $v_2$ has to be defined on an open set. Therefore,
we will need to extend the above vector fields to whole open sets, as
follows.

Consider first a smooth vector field $v^s$ on $W^u_{1/100}(x)$,
pointing everywhere in the stable direction. We will now
construct an extension $\extension{v^s}$ of $v^s$ to a
neighborhood of $W^u_{1/200}(x)$ in $X$.

For $y\in W^u_{1/100}(x)$, the stable manifold $W^s(y)$ is
affine, its tangent space is everywhere equal to $E^s(y)$, and
we may therefore define $\extension{v^s}(z)=v^s(y)$ for $z\in
W^s(y)$: this extended vector field is still tangent to the
direction $E^s$. Finally, for small $t$, we define
$\extension{v^s}(g_t z)=D g_t(z) \cdot \extension{v^s}(z)$,
i.e., we push the vector field by $g_t$. Since $g_t$ sends
stable direction to stable direction, $\extension{v^s}$ is
everywhere tangent to the stable direction. Since the unstable
direction, the stable direction and the flow direction are
transverse at every point, we can uniquely parameterize a point
in a neighborhood of $W^u_{1/200}(x)$ as $g_t(z)$ for some
$z\in W_\epsilon^s(y)$, $y\in W^u_{1/200+\epsilon}(x)$. This
defines the extension of $v^s$.

If $v^\omega$ is a vector field along $W^u_{1/100}(x)$ pointing
everywhere in the flow direction, we can also define an
extension $\extension{v^\omega}$ as follows. Along $W^u$, write
$v^\omega(y)=\psi(y) \omega(y)$, where the function $\psi$ is
smooth. Let $\extension{v^\omega} (g_t z)= \psi(y) \omega(g_t
z)$ for $z\in W_\epsilon^s(y)$ as above, this defines a smooth
vector field extending $v^\omega$ as desired.

%
%

\medskip

For $k,\ell\in \N$, $\alpha\in \{s,\omega\}^\ell$ and $x \in
X$, we can now define a distributional coefficient of the
$C^\infty$ function $f$ at $x$, as follows (the function
$V_\delta$ has been defined in Proposition
\ref{prop_Vdelta_OK_sur_Wu}):
  \begin{equation}
  \label{defekl}
  e_{k,\ell,\alpha}(f;x)\coloneqq \frac{1}{V_\delta(x)} \frac{1}{\mu_u(W^u_{1/200}(x))}\sup\left|\int_{W^u_{1/200}(x)}
  \phi \cdot L_{\extension{v_1}}\cdots L_{\extension{v_\ell}} f \dd\mu_u \right|,
  \end{equation}
where the supremum is over all compactly supported functions $\phi:
W^u_{1/200}(x) \to \C$ with $\norm{\phi}_{C^{k+\ell}}\leq 1$, and all
vector fields $v_1,\dots,v_\ell$ defined on $W^u_{1/100}(x)$ such
that $v_j(y)\in E^s(y)$ if $\alpha_j = s$ and $v_j(y)\in \R\omega(y)$
if $\alpha_j=\omega$, and
$\norm{v_j}_{C^{k+\ell+1}(W^u_{1/100}(x))}\leq 1$. Note that the
domain of definition of the vector fields is \emph{larger} than the
domain of integration in \eqref{defekl} -- this will be useful for
extension purposes below. Note also that we use the Lie derivative
with respect to the extended vector fields $\extension{v_j}$, but the
norm requirements on the vector fields $v_j$ are only along $W^u$ and
not in the transverse direction.

Define $e_{k,\ell,\alpha}(f)=\sup_x e_{k,\ell,\alpha}(f;x)$.
Let $e_{k,\ell}(f)=\sum_{\alpha \in \{s,\omega\}^\ell}
e_{k,\ell,\alpha}(f)$. Finally, let
  \begin{equation}
  \label{defnorm}
  \norm{f}_k = \sup_{0\leq \ell \leq k} e_{k,\ell}(f).
  \end{equation}

\begin{rmk}
\label{continuity f_1}
If $f_1 \in \boD^\Gamma$ then we have the estimate
  \begin{equation*}
  \int_{X/\Gamma} f_1 f \dd \mu
  \leq C(f_1) e_{k,0}(f) \leq C(f_1) \norm{f}_k, \quad f \in \boD^\Gamma,
  \end{equation*}
where $C(f_1)$ depends on the
support of $f_1$ as well as its $C^k$ norm therein.  This is readily obtained by
decomposing $f_1$ as a sum of finitely many functions with small support
(using partitions of unity), using locally the disintegration of $\mu$ along
local unstable manifolds, and applying the definition of $e_{k,0}$ to bound the
integrals along those.

\end{rmk}

We will also need a weaker norm, that we denote by
$\normdeux{\cdot}_k$, given by
  \begin{equation}
  \label{defnormdeux}
  \normdeux{f}_k = \sup \frac{1}{V_\delta(x)} \frac{1}{\mu_u(W^u_{1/200}(x))}\left|\int_{W^u_{1/200}(x)}
  \phi \cdot L_{\extension{v_1}}\cdots L_{\extension{v_\ell}} f \dd\mu_u \right|,
  \end{equation}
where the supremum is over $0\leq \ell\leq k-1$, over all points
$x\in X$, all compactly supported functions $\phi: W^u_{1/200}(x) \to
\C$ with $\norm{\phi}_{C^{k+\ell+1}}\leq 1$, and all vector fields
$v_1,\dots,v_\ell$ defined on $W^u_{1/100}(x)$ and pointing either in
the stable direction or in the flow direction, such that
$\norm{v_j}_{C^{k+\ell+1}(W^u_{1/100}(x))}\leq 1$. Apart from
constants, the difference with the norm $\norm{f}_k$ is that we allow
less derivatives (at most $k-1$ instead of $k$), and that the test
function $\phi$ has one more degree of smoothness (it is in
$C^{k+\ell+1}$ instead of $C^{k+\ell}$). Therefore, the norm
$\normdeux{f}_k$ is weaker in all directions than the norm
$\norm{f}_k$. Hence, the following compactness result is not
surprising.

\begin{prop}
\label{prop_compactness}
Let $K$ be a compact set mod $\Gamma$, and let $k\in \N$. Let
$f_n$ be a sequence of functions in $\boD^\Gamma$, supported in
$K$, and with $\norm{f_n}_k \leq 1$. Then there exists a
subsequence $f_{j(n)}$ which is Cauchy for the norm
$\normdeux{\cdot}_k$.
\end{prop}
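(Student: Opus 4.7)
The plan is an Arzel\`a--Ascoli type compactness argument adapted to this anisotropic setting. The key gain is that, for the same number $\ell$ of Lie derivatives, the norm $\normdeux{\cdot}_k$ tests against functions $\phi$ with one more degree of smoothness than $\norm{\cdot}_k$ (namely $C^{k+\ell+1}$ instead of $C^{k+\ell}$), while allowing only $\ell\leq k-1$ instead of $\ell\leq k$. Since the unit ball of $C^{k+\ell+1}$ on a fixed compact affine set embeds compactly in $C^{k+\ell}$, a uniform bound in $\norm{\cdot}_k$ should yield precompactness in $\normdeux{\cdot}_k$.

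First I would reduce the supremum over $x$ appearing in $\normdeux{\cdot}_k$ to a supremum over a compact set. Because each $f_n$ is $\Gamma$-invariant and supported in $K$, the integral $\int_{W^u_{1/200}(x)} \phi \cdot L_{\extension{v_1}} \cdots L_{\extension{v_\ell}} f_n \dd\mu_u$ vanishes unless $W^u_{1/200}(x)$ meets some $\Gamma$-translate of $K$, so by $\Gamma$-equivariance one can restrict $x$ to a compact set $K'\subset X$ (compact mod $\Gamma$). Cover the projection of $K'$ in $X/\Gamma$ by finitely many small open sets $U_i$ with distinguished base points $x_i\in U_i$, chosen small enough that, by smooth dependence of $E^s$ and $\omega$ on the base point (Proposition \ref{prop_local_product}) and slow variation of the Finsler metric (Proposition \ref{Psigood}), test tuples on $W^u_{1/100}(x)$ can be transported to $W^u_{1/100}(x_i)$ with small, controlled distortion of $C^{k+\ell+1}$ norms.

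Fix $i$, $\ell\leq k-1$ and $\alpha\in\{s,\omega\}^\ell$. The admissible test tuples $(\phi,v_1,\ldots,v_\ell)$ at $x_i$ form, by Arzel\`a--Ascoli on the precompact affine set $W^u_{1/100}(x_i)$, a relatively compact subset of the weaker $C^{k+\ell}\times (C^{k+\ell+1})^\ell$ topology. On the other hand, the hypothesis $\norm{f_n}_k \leq 1$ translates into
\begin{equation*}
\left| \int_{W^u_{1/200}(x_i)} \phi \cdot L_{\extension{v_1}} \cdots L_{\extension{v_\ell}} f_n \dd\mu_u \right|
\leq V_\delta(x_i)\, \mu_u(W^u_{1/200}(x_i))\, \norm{\phi}_{C^{k+\ell}} \prod_j \norm{v_j}_{C^{k+\ell+1}},
\end{equation*}
so the associated linear functionals $T_n^{i,\ell,\alpha}$ are uniformly bounded and equicontinuous for this weaker topology. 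A standard diagonal extraction, over the finitely many $(i,\ell,\alpha)$ and a countable dense subset of test tuples, produces a subsequence $f_{j(n)}$ for which all these functionals converge pointwise on the dense set; equicontinuity together with the $C^{k+\ell}\times(C^{k+\ell+1})^\ell$-precompactness of the admissible set upgrades this to uniform convergence on the full admissible set.

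It remains to pass from the finite set of base points $x_i$ to the supremum over all $x\in K'$. Given $x\in U_i$, I would transport each admissible test tuple on $W^u_{1/100}(x)$ to a nearby admissible one on $W^u_{1/100}(x_i)$, using the affine structure of the unstable leaves together with the smooth dependence of $E^s$ and $\omega$. The resulting error in the integrals goes to zero with the diameter of $U_i$, uniformly in $n$ since $\norm{f_n}_k\leq 1$ controls integrals against perturbed tuples as well. Refining the cover then yields $\normdeux{f_{j(n)}-f_{j(m)}}_k \to 0$. I expect this last step to be the main obstacle: the extended vector fields $\extension{v_j}$ (pushed along stable manifolds and the flow), the measure $\mu_u$, the domain $W^u_{1/200}(x)$, and the factor $V_\delta(x)$ all depend on $x$, and one must extract enough uniform regularity of these pieces --- ultimately from Propositions \ref{prop_local_product}, \ref{Psigood} and \ref{prop_Vdelta_OK_sur_Wu} --- to close the transport argument without losing uniformity in $n$.
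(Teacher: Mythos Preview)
Your outline matches the paper's approach: reduce to finitely many base points $x_i$, use the compact embedding $C^{k+\ell+1}\hookrightarrow C^{k+\ell}$ to reduce to finitely many test tuples, extract diagonally. Two points need fixing.

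First, your precompactness claim for test tuples is wrong as written: the $v_j$ sit in the unit ball of $C^{k+\ell+1}$, which is not relatively compact in $C^{k+\ell+1}$. You must put the $v_j$ in the weaker $C^{k+\ell}$ topology as well, and then you need the integral to depend continuously on $v_j$ for that topology, uniformly in $n$. This does not follow from the definition of $e_{k,\ell}$ (which requires $C^{k+\ell+1}$ vector fields that are moreover canonical extensions). The paper proves it separately as Lemma~\ref{lem_equiv_norms}: decompose an arbitrary $C^{k+\ell}$ vector field into unstable, stable and flow components, integrate the unstable part by parts (trading one derivative of $f$ for one of $\phi$), and commute the remaining pieces into canonical position.

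Second, and more substantially, you have not explained the transport from $W^u(x)$ to $W^u(x_i)$, and this is precisely where the restriction $\ell\leq k-1$ is spent. The integrals live on \emph{different} leaves, so closeness of test tuples alone does not compare them. The paper connects the leaves by a smooth family of affine maps $\Phi_t:W^u(x_0)\to W^u(x_t)$ and writes the difference of the two integrals as $\int_0^1 \partial_t(\cdots)\,dt$; the $t$-derivative produces an \emph{extra} Lie derivative $L_{w_t}f$ with $w_t=(\partial_t\Phi_t)\circ\Phi_t^{-1}$. One now has $\ell+1\leq k$ derivatives of $f$ against $C^{k+\ell+1}$ data, which is bounded by $C\norm{f}_k$ via Lemma~\ref{lem_equiv_norms} (needed again since $w_t$ points in no special direction), and the smallness comes from $\norm{w_t}_{C^{k+\ell+1}}\to 0$ as $x\to x_i$. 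Your appeal to ``perturbed tuples'' and ``uniform regularity of these pieces'' does not capture this extra-derivative mechanism, which is the actual engine of the step you flagged as the main obstacle.
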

In other words, if we work with the completions of the spaces,
then the unit ball for the norm $\norm{\cdot}_k$ is relatively
compact for the norm $\normdeux{\cdot}_k$ if we consider only
functions on $X/\Gamma$ that are supported in a fixed compact
set.

The rest of this subsection is devoted to the proof of this
proposition (it is similar to the proof of Lemma 2.1 in
\cite{gouezel_liverani}). We will need a preliminary lemma.

Let us fix for any $r$ a $C^r$ norm on the functions supported
in $K$, such that this norm is $\Gamma$-invariant. Such a norm
is not canonically defined, but this will not be a problem in
the statements or results to follow since multiplicative
constants do not matter.

\begin{lem}
\label{lem_equiv_norms}
There exists a constant $C(k,\ell,K)$ such that any smooth
function $f$ supported in $K$ satisfies the following property.
For any $x\in K$, any $C^{k+\ell}$ vector fields $v_1,\dots,
v_\ell$ defined on a neighborhood of $W^u_{1/100}(x)$ with
$\norm{v_j}_{C^{k+\ell}} \leq 1$, and any $C^{k+\ell}$ function
$\phi$, compactly supported on $W^u_{1/200}(x)$ with
$\norm{\phi}_{C^{k+\ell}} \leq 1$,
  \begin{equation*}
  \left| \int_{W^u_{1/200}(x)} \phi\cdot L_{v_1}\cdots L_{v_\ell} f\dd\mu_u\right|
  \leq C \sum_{\ell'\leq \ell} e_{k,\ell'}(f).
  \end{equation*}
\end{lem}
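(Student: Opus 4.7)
The plan is to reduce to vector fields pointing in the stable or flow direction (for which the definition of $e_{k,\ell,\alpha}$ applies) by decomposing along the smooth invariant splitting from Proposition \ref{prop_local_product} and handling unstable components via integration by parts along $W^u$ against $\mu_u$. I would argue by induction on $\ell$, with $k$ fixed.

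The base case $\ell = 0$ is immediate: since $\phi \in C^k$ has norm $\leq 1$, the definition gives $\left|\int \phi f \dd\mu_u\right| \leq V_\delta(x)\, \mu_u(W^u_{1/200}(x))\, e_{k,0}(f) \leq C(K)\, e_{k,0}(f)$, the last inequality because $\sys(x)$ is bounded below on $K$ (so $V_\delta(x) \leq C(K)$) and $\mu_u(W^u_{1/200}(x)) \leq C(K)$. For the inductive step, I would first use the smooth splitting $\boT X = \R\omega \oplus E^u \oplus E^s$ to decompose each $v_j = v_j^\omega + v_j^u + v_j^s$; on a neighborhood of $W^u_{1/100}(x)$ for $x \in K$, compactness of $K \bmod \Gamma$ gives $\norm{v_j^\alpha}_{C^r} \leq C(K,r)\, \norm{v_j}_{C^r}$ for $\alpha \in \{\omega,u,s\}$ and any $r$. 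Expanding $L_{v_1}\cdots L_{v_\ell}f$ by multilinearity of $v \mapsto L_v$ produces $3^\ell$ terms $L_{v_1^{\alpha_1}}\cdots L_{v_\ell^{\alpha_\ell}}f$, indexed by $\alpha \in \{\omega,u,s\}^\ell$.

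For each term with at least one unstable index, I would bring the first unstable $L_{v_{j_0}^u}$ to the front using the commutator identity $L_v L_w = L_w L_v + L_{[v,w]}$; this produces a main term with $L_{v_{j_0}^u}$ leading plus $O(\ell)$ correction terms having $\ell - 1$ Lie derivatives with commutator vector fields of class $C^{k+\ell-1}$. Since $v_{j_0}^u$ is tangent to $W^u$ and $\phi$ is compactly supported in $W^u_{1/200}(x)$ (no boundary contribution), I would integrate the main term by parts against $\mu_u$:
\begin{equation*}
\int \phi \cdot L_{v_{j_0}^u}(g) \dd\mu_u = -\int \bigl(L_{v_{j_0}^u}\phi + \phi \cdot \mathrm{div}_{\mu_u}(v_{j_0}^u)\bigr)\, g \dd\mu_u.
\end{equation*}
The smoothness of $\mu_u$ along $W^u$ (Proposition \ref{prop_local_product}) controls the divergence factor on $K$, the new test function lies in $C^{k+\ell-1}$ with norm $O(1)$, and only $\ell - 1$ Lie derivatives remain; the inductive hypothesis then bounds the contribution by $\sum_{\ell' \leq \ell - 1} e_{k,\ell'}(f)$. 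Terms with all $\alpha_j \in \{\omega,s\}$ are bounded directly by $e_{k,\ell,\alpha}(f)$, after reconciling the canonical extensions $\extension{v^s}, \extension{v^\omega}$ used in the definition with the ambient extensions coming from our decomposition---the differences, coming from derivatives of $v_j^{\alpha_j}$ in the transverse directions, yield strictly lower-order terms absorbed by the induction.

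The main obstacle I anticipate is careful bookkeeping of regularity classes: the definition of $e_{k,\ell,\alpha}$ requires $C^{k+\ell+1}$ vector fields while our decomposition produces only $C^{k+\ell}$ ones, so this gap (together with matching different choices of extensions) must be threaded consistently through the induction, possibly by invoking intermediate $e_{k,\ell',\alpha}$ with $\ell' < \ell$ to absorb lost derivatives. Uniformity of the constant $C(k,\ell,K)$ over $x \in K$ follows from the compactness of $K \bmod \Gamma$ together with the smoothness of the splitting, of $\mu_u$, and of the extension procedure used to define $\extension{\cdot}$.
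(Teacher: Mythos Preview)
Your strategy---induction on $\ell$, splitting each $v_j$ along $\R\omega\oplus E^u\oplus E^s$, integrating the unstable component by parts, and reducing via commutators---is exactly the paper's approach, and you correctly flag the regularity gap (the definition of $e_{k,\ell,\alpha}$ demands $C^{k+\ell+1}$ vector fields, yours are only $C^{k+\ell}$) as the main obstacle. However, your suggested fix, ``invoking intermediate $e_{k,\ell'}$ with $\ell'<\ell$ to absorb lost derivatives'', does not close the gap: after all reductions the purely stable/flow terms still carry exactly $\ell$ derivatives against $C^{k+\ell}$ vector fields, and these must be compared to $e_{k,\ell}(f)$ itself. No amount of dropping to lower $\ell'$ handles that top term.

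The device the paper uses, and which you are missing, is this: along $W^u_{1/100}(x)$ one writes $v_j^s=\sum_m \psi_{j,m}\,w_{j,m}$ where the $w_{j,m}$ are fixed $C^\infty$ coordinate vector fields in $E^s$ (hence $C^{k+\ell+1}$) and the $\psi_{j,m}$ are $C^{k+\ell}$ scalar functions. The scalars are then absorbed into the test function $\phi$ (which only needs $C^{k+\ell}$ regularity), while the vector fields fed into the definition of $e_{k,\ell}$ are now smooth enough. The paper also organizes the induction slightly differently from your all-at-once $3^\ell$ expansion: it peels off $v_1$ first, using that $L_{v_1}g$ at a point of $W^u$ depends only on the \emph{value} of $v_1$ there (so one may replace $v_1^s$ by $\psi_1^s\,\extension{w_1^s}$ along $W^u$ without any transverse-jet correction), commutes $L_{\extension{w_1^{\alpha_1}}}$ to the far right, and only then decomposes $v_2$. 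This one-at-a-time processing sidesteps the extension-matching issue you raise, since at each step only the pointwise value of the current vector field matters.
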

The interest of this lemma is that the vector fields $v_j$ can
be any vector fields, not only canonical extensions of vector
fields pointing in the stable direction or in the flow
direction. Moreover, we also weaken the smoothness of the
vector fields $v_j$, requiring them only to be $C^{k+\ell}$
instead of $C^{k+\ell+1}$.
\begin{proof}
We prove the statement of the lemma by induction on $\ell$. For $\ell=0$,
this is clear from the definitions.  Let us
decompose the vector field $v_1$ as $v_1^u + v_1^s + v_1^\omega$
where those three components point, respectively, in the unstable
direction, in the stable direction and in the flow direction. Along
$W^u_{1/100}(x)$, decomposing $v_1^s$ along coordinates vector
fields, we can write it as a linear combination of vector fields of
the form $\psi_1^s w_1^s$ where $\psi_1^s$ is a function bounded in
$C^{k+\ell}$ and $w_1^s$ is a $C^{\infty}$ vector field with
$\norm{w_1^s}_{C^{k+\ell+1}}\leq C$. To simplify notations, we will
omit a summation and assume that we can write $v_1^s(y)=\psi_1^s(y)
w_1^s(y)$. In the same way, we write $v_1^\omega(y)=\psi_1^\omega(y)
\omega(y)$ where $\norm{\psi_1^\omega}_{C^{k+\ell}}\leq C$. For
convenience, we introduce the notation $w_1^\omega = \omega$.

Let $g= L_{v_2}\dots L_{v_\ell} f$. Since $L_{v_1} g$ only
depends on the value of the vector field $v_1$ (and not its
derivatives), we have, along $W^u_{1/200}(x)$, $L_{v_1} g =
L_{v_1^u} g + \psi_1^s L_{\extension{w_1^s}} g+\psi_1^\omega
L_{\extension{w_1^\omega}}g$. Moreover,
  \begin{equation*}
  \int_{W^u_{1/200}(x)} \phi \cdot L_{v_1^u} g \dd\mu_u
  = -\int_{W^u_{1/200}(x)} L_{v_1^u}\phi \cdot g\dd\mu_u,
  \end{equation*}
which is bounded by $C \sum_{\ell'\leq \ell-1} e_{k,\ell'}(f)$
by the induction hypothesis, since the function $L_{v_1^u}\phi$
is $C^{k+\ell-1}$ and is multiplied by $\ell-1$ derivatives of
$f$ against $C^{k+\ell-1}$ vector fields.

It remains to bound $\int_{W^u_{1/200}(x)} \phi
\psi_1^{\alpha_1}\cdot
L_{\extension{w_1^{\alpha_1}}}L_{v_2}\cdots L_{v_\ell}
f\dd\mu_u$, for some $\alpha_1 \in \{s,\omega\}$. Let us
exchange the vector fields to put
$L_{\extension{w_1^{\alpha_1}}}$ in the last position. Since
$[L_v, L_w]=L_{[v,w]}$, the error we make is bounded by the
integral of a $C^{k+\ell}$ function multiplied by $\ell-1$
derivatives of $f$ against $C^{k+\ell-1}$ vector fields. By the
induction hypothesis, this is again bounded by $C
\sum_{\ell'\leq \ell-1} e_{k,\ell'}(f)$.

It remains to bound $\int_{W^u_{1/200}(x)}
\phi\psi_1^{\alpha_1}\cdot L_{v_2}\cdots L_{v_\ell}
L_{\extension{w_1^{\alpha_1}}}f\dd\mu_u$. In the same way as
above, we decompose $v_2$ into its unstable, stable and flow
part, integrate by parts to get rid of the unstable part, and
exchange the vector fields to put the remaining parts of $v_2$
at the end. Iterating this process $\ell$ times, we end up with
an estimate
  \begin{multline*}
  \left| \int_{W^u_{1/200}(x)} \phi\cdot L_{v_1}\cdots L_{v_\ell} f\dd\mu_u\right|
  \\
  \leq C \sum_{\ell'\leq \ell-1} e_{k,\ell'}(f)
  + C\sup_{\alpha\in \{s,\omega\}^\ell}
  \left|\int_{W^u_{1/200}(x)} \phi \psi_1^{\alpha_1}\cdots \psi_\ell^{\alpha_\ell}
  \cdot L_{\extension{w_1^{\alpha_1}}}\cdots L_{\extension{w_\ell^{\alpha_\ell}}} f\dd\mu_u\right|.
  \end{multline*}
By construction, the vector fields $\extension{w_j^{\alpha_j}}$
are canonical extensions of $C^{k+\ell+1}$ vector fields
defined along $W^u_{1/200}(x)$ and pointing in the stable or
flow direction. Therefore, the latter integrals are bounded by
$C e_{k,\ell}(f)$ by definition of this coefficient.
\end{proof}

\begin{proof}[Proof of Proposition \ref{prop_compactness}]
The first step of the proof is to show that, to estimate
$\normdeux{f}_{k}$, it is sufficient to work with finitely many
unstable manifolds. More precisely, we will show that, for any
$\epsilon>0$, there exist finitely many points $(x_i)_{i\in I}$
such that, for any function $f$ supported in $K$ and
$\Gamma$-invariant,
  \begin{equation}
  \label{compactprime}
  \normdeux{f}_k \leq C \epsilon \norm{f}_k + C \sup \left|\int_{W^u_{1/200}(x_i)}
  \phi \cdot L_{v_1}\cdots L_{v_\ell} f \dd\mu_u \right|,
  \end{equation}
where the supremum is taken over all $0\leq \ell\leq k-1$, all
$i\in I$, all functions $\phi$ compacly supported on $W^u_{1/200}(x_i)$
and all vector fields $v_j$
defined in some fixed neighborhood $\mathcal {U}_i$
of $W^u_{1/100}(x_i)$ with $C^{k+\ell+1}$ norm
bounded by $1$.

Since $K/\Gamma$ is compact, it is sufficient to show that
integrals along the unstable manifold of a point $x_1$ can be
controlled by similar integrals along the unstable manifold of
a nearby point $x_0$. Let $x_0$, $x_1$ be two nearby points in
$K$ (so that their unstable spaces $E^u(x_0)$ and $E^u(x_1)$
are also close). Consider a smooth path $x_t$ from $x_0$ to
$x_1$, and a smooth family of maps sending $E^u(x_0)$ to
$E^u(x_t)$. Parameterizing locally the (affine) unstable
manifold of the point $x_t$ by its tangent space (by the map
$\Psi_{x_t}$ introduced before Proposition \ref{Psigood}), we obtain
a family of affine maps $\Phi_t : W^u_{1/50}(x_0) \to W^u(x_t)$ with
$\Phi_0=\id$, that we extend smoothly to diffeomorphisms
defined on a neighborhood of $W^u_{1/50}(x_0)$.

Fix $0 \leq \ell \leq k-1$ and consider a $C^{k+\ell+1}$ function $\phi$
compactly supported on
$W^u_{1/400}(x_1)$, and $C^{k+\ell+1}$ vector fields
$v_1,\dots,v_\ell$ along $W^u_{1/50}(x_1)$, each of them pointing
either in the stable direction or in the flow direction, with $C^{k+\ell+1}$
norm bounded by $1$.  We
want to bound the integral
  \begin{equation*}
  I_1=\int_{W^u(x_1)} \phi \cdot L_{\extension{v_1}}\cdots L_{\extension{v_\ell}}f \dd\mu_u,
  \end{equation*}
using data along $W^u(x_0)$.

For each $t$, we define vector
fields $v_j^t$ on a neighborhood of $W^u_{1/75}(x_t)$ by $v_j^0 =
(\Phi_1)^* \extension{v_j}$, and $v_j^t= (\Phi_t)_* v_j^0$.
Letting $J_t\in (0,+\infty)$ be the jacobian of $\Phi_t$ from
$W^u(x_0)$ to $W^u(x_t)$, we can rewrite $I_1$ as a sum of two
terms
  \begin{multline*}
  I_1=\int_{W^u(x_0)} \phi \circ \Phi_1 \cdot L_{v_1^0}\cdots L_{v_\ell^0} (f\circ \Phi_1) J_1 \dd \mu_u
  = \int_{W^u(x_0)} \phi \circ \Phi_1 \cdot L_{v_1^0}\cdots L_{v_\ell^0} f \cdot J_1 \dd \mu_u
  \\
  +\int_{t=0}^1 \frac{\partial}{\partial t} \left( \int_{W^u(x_0)}
  \phi \circ \Phi_1 \cdot L_{v_1^0}\cdots L_{v_\ell^0} (f\circ \Phi_t) \cdot J_1 \dd \mu_u\right)
  \dd t.
  \end{multline*}
The first term is bounded by the second term in the right hand side
of \eqref{compactprime}. Writing $w_t = (\partial \Phi_t/\partial
t)\circ \Phi_t^{-1}$, the integrand of the second term at fixed $t$
is
  \begin{multline*}
  \int_{W^u(x_0)}
  \phi \circ \Phi_1 \cdot L_{v_1^0}\cdots L_{v_\ell^0}
  ((L_{w_t} f)\circ \Phi_t) \cdot J_1 \dd \mu_u
  \\
  =\int_{W^u(x_t)}
  \phi \circ \Phi_1 \circ \Phi_t^{-1}\cdot  L_{v_1^t}\cdots L_{v_\ell^t}
  L_{w_t} f \cdot J_1 J_t^{-1}
  \dd\mu_u.
  \end{multline*}
This is an integral along an unstable manifold of a $C^{k+\ell+1}$
function multiplied by $\ell+1$ derivatives of $f$ against
$C^{k+\ell+1}$ vector fields. By Lemma \ref{lem_equiv_norms} (applied
to $\ell'=\ell+1$, which is licit since $\ell<k$ by assumption), this
is bounded in terms of $\norm{f}_k$. Moreover, if $x_0$ and $x_1$ are
close enough, the $C^{k+\ell+1}$ norm of the vector field $w_t$ is
arbitrarily small, and we get that this integral is bounded by
$C\epsilon \norm{f}_k$.  Putting together the two terms, we see that
$I_1$ is bounded by the right hand side of \eqref{compactprime}. Up
to constants (which do depend on $K$), the norm $\normdeux {f}_k$ is
defined using integrals similar to $I_1$, but where $\phi$ is allowed
to have a larger support $W^u_{1/200}(x_1)$ and the $v_j$ may have a
smaller domain of definition $W^u_{1/100}(x_1)$.  However, this is
not a problem, since those more general integrals can be decomposed
as sums of a bounded number of integrals like $I_1$, using partitions
of unity. This concludes the proof of \eqref{compactprime}.

\smallskip

It is now easy to conclude the proof.  Fix smooth bump functions
$\partition_i$ compactly supported in $\mathcal {U}_i$ (the domain of
definition of the $v_j$ in \eqref{compactprime}) and equal to $1$ in
a neighborhood of $W^u_{1/200}(x_i)$.  Since $C^{k+\ell+1}$ is
compactly included in $C^{k+\ell}$, for each $x_i$, $i \in I$, we can
choose finitely many functions $\phi_{m,i}$ compactly supported in
$W^u_{1/200}(x_i)$ and finitely many vector fields $v_{j,m,i}$
defined in $\mathcal {U}_i$, such that for  all functions $\phi$ and
vector fields $v_j$ which are bounded by $1$ in $C^{k+\ell+1}$, there
exists $m$ such that $\phi$ and $\partition_i v_j$ are
$\epsilon$-close to $\phi_{m,i}$ and $\partition_i v_{j,m,i}$ in
$C^{k+\ell}$.  By Lemma \ref{lem_equiv_norms}, this gives with
\eqref{compactprime}
  \begin{equation*}
  \normdeux{f}_k \leq C'\epsilon \norm{f}_k + \sup_{i,m}
  \left|\int_{W^u_{1/200}(x_i)} \phi_{m,i} \cdot L_{v_{1,m,i}}\cdots
  L_{v_{\ell,m,i}} f \dd\mu_u\right|.
  \end{equation*}

Consider now a sequence $f_n$ with $\norm{f_n}_k\leq 1$. We extract a
subsequence $f_{j(n)}$ along which all the finitely many quantities
$\int_{W^u_{1/200}(x_i)} \phi_{m,i} \cdot L_{v_{1,m,i}}\cdots
L_{v_{\ell,m,i}} f_{j(n)} \dd\mu_u$ converge. It follows that
$\limsup_{n,n'\to \infty} \normdeux{f_{j(n)} -f_{j(n')}}_k \leq
2C'\epsilon$. Letting $\epsilon$ tend to $0$ and using a standard
diagonal argument, we get the required Cauchy sequence.
\end{proof}

\section{A good bound on the essential spectral radius of
\texorpdfstring{$\boM$}{M}}

\label{sec_first_step}

Let $\tilde\mu$ be an admissible measure with its affine local
product structure, supported by a submanifold $X$ of $\Teich_1$. In
this section, we prove Theorem
\ref{thm_control_spectral_radius_annonce}. As in the statement of
this theorem, let us write $\boM f = \int_{t=0}^\infty e^{-4\delta t}
\boL_t f\dd t$ (to be interpreted as explained in \S
\ref{subsec_main_result_first_step}), where $\delta>0$ is fixed and
$\boL_t f=f\circ g_t$.

To prove Theorem \ref{thm_control_spectral_radius_annonce}, we have
to construct a good norm on $\boD^\Gamma$. It turns out that the
norms $\norm{\cdot}_k$ that we have constructed in the previous section in
\eqref{defnorm} are suitable for this purpose. The following
statement contains Theorem \ref{thm_control_spectral_radius_annonce}
(see also Remark \ref{continuity f_1}).

\begin{thm}
\label{thm_control_spectral_radius}
For all $k$, there exists $C>0$ such that $\norm{\boL_t f}_k
\leq C \norm{f}_k$, uniformly in $t\geq 0$. Therefore, $\boM$
acts continuously on the completion of $\boD^\Gamma$ for the
norm $\norm{\cdot}_k$.

Moreover, if $k$ is large enough, then the essential spectral
radius of $\boM$ on this space is at most $1+\delta$.
\end{thm}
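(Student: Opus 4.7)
The plan is to verify Hennion's criterion (Lemma \ref{lemme_hennion}) for a sufficiently high iterate $\boM^n$, using the pair of norms $(\norm{\cdot}_k,\normdeux{\cdot}_k)$ introduced in Section \ref{par_construct_norms}, to conclude that $\ress(\boM)\leq 1+\delta$ once $k$ is large enough and the free parameters are tuned. Three ingredients are combined: the uniform hyperbolicity of $g_t$ on compact subsets of $X/\Gamma$ (Proposition \ref{pliss_hyperbolicity}); the exponential recurrence to compact sets via the Eskin--Masur function $V_\delta$ (Proposition \ref{prop_Vdelta_OK_sur_Wu}); and good geometric control of pushforwards of test functions and vector fields along the flow, coming from the slowly-varying Finsler metric of Section \ref{sec_Finsler} (Lemma \ref{weakly_hyperbolic}, Propositions \ref{Psigood} and \ref{prop_partition}). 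The compactness needed for Hennion's criterion is given by Proposition \ref{prop_compactness}, supplemented by a truncation against $V_\delta$ to handle the non-compactness of $X/\Gamma$.

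For the first statement $\norm{\boL_t f}_k\leq C\norm{f}_k$ I fix $x\in X$ and test data $\phi,v_1,\ldots,v_\ell$ as in \eqref{defekl}, and perform the change of variables $z=g_t y$. Since the canonical extension commutes with pushforward by $g_t$ on stable and flow vector fields, the identity $L_{\extension{v_j}}(f\circ g_t)=(L_{\extension{(g_t)_* v_j}} f)\circ g_t$ combined with $(g_t)_*\mu_u=e^{-dt}\mu_u$ converts the integral to $e^{-dt}\int_{g_t W^u_{1/200}(x)}(\phi\circ g_{-t})\cdot L_{\extension{(g_t)_* v_1}}\cdots L_{\extension{(g_t)_* v_\ell}} f\,d\mu_u$. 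By Lemma \ref{weakly_hyperbolic} the pushed-forward test function stays bounded in $C^{k+\ell}$ and the pushed-forward vector fields in $C^{k+\ell+1}$, uniformly in $t$. Covering $g_t W^u_{1/200}(x)$ by a bounded-multiplicity family of balls $W^u_{1/200}(y_i)$ with a partition of unity from Proposition \ref{prop_partition} and applying the definition of $e_{k,\ell,\alpha}(f;y_i)$ on each ball, summing and reverting the change of variables gives a bound proportional to $e_{k,\ell,\alpha}(f)\cdot\mu_u(W^u_{1/200}(x))^{-1}\int_{W^u_{1/200}(x)}V_\delta\circ g_t\,d\mu_u$. Proposition \ref{prop_Vdelta_OK_sur_Wu} together with Corollary \ref{cor_doubling} bounds this by $C\,e_{k,\ell,\alpha}(f)[e^{-(1-2\delta)t}V_\delta(x)+1]$; dividing by $V_\delta(x)$ yields the uniform bound.

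For the essential spectral radius I establish a Lasota--Yorke inequality for iterates of $\boL_{T_0}$ with a suitable $T_0$. Following the outline in the introduction, I fix a very large compact $K\subset X/\Gamma$ and use a smooth cutoff from Lemma \ref{lem_phiV} to write $\boL_{T_0}=\tilde\boL_1+\tilde\boL_2$, where $\tilde\boL_1$ captures trajectories that stay well inside $K$ during $[0,T_0]$. Expanding $\boL_{T_0}^n=\sum_{\gamma\in\{1,2\}^n}\tilde\boL_{\gamma_n}\cdots\tilde\boL_{\gamma_1}$, each $\tilde\boL_1$-factor is controlled by the same change-of-variables argument as above, but now with the strict contraction of Proposition \ref{pliss_hyperbolicity}: each stable vector field is contracted pointwise by $\lambda<1$, and each $W^u$-derivative of the pulled-back test function or of the pushed vector fields picks up an additional factor $\lambda$. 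A decomposition of the test function into low- and high-frequency parts, combined with redistribution of derivatives via integration by parts along $W^u$, converts the $k$ derivatives inherent in the norm $\norm{\cdot}_k$ into a net gain of order $\lambda^k$ per $\tilde\boL_1$-step on the strong norm, the residual terms being absorbed by $\normdeux{\cdot}_k$ (which tolerates one extra derivative on the test function). Each $\tilde\boL_2$-factor produces a gain proportional to $\sup_{X\setminus K}V_\delta^{-1}$ via Proposition \ref{prop_Vdelta_OK_sur_Wu}, arbitrarily small when $K$ is enlarged. A standard combinatorial summation over the $2^n$ words yields $\norm{\boL_{T_0}^n f}_k\leq\mu^n\norm{f}_k+C_n\normdeux{f}_k$ with $\mu\leq C(\lambda^k+\sup_{X\setminus K}V_\delta^{-1})$, arbitrarily small in $k$ and $K$.

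To conclude I pass to $\boM^n$ using $\boM^n f=\frac{1}{(n-1)!}\int_0^\infty T^{n-1}e^{-4\delta T}\boL_T f\,dT$: a grid discretization $T=mT_0+r$ and the previous Lasota--Yorke inequality for $\boL_{T_0}^m$ give $\norm{\boM^n f}_k\leq A_n\norm{f}_k+B_n\normdeux{f}_k$ with $A_n\leq(1+\delta)^n$ once the parameters are tuned and $k$ is large. To apply Lemma \ref{lemme_hennion} I also need the unit ball of $\norm{\cdot}_k$ to be relatively compact in $\normdeux{\cdot}_k$; Proposition \ref{prop_compactness} provides this for functions of fixed compact support, and the general case is reduced to it by truncation with the cutoff of Lemma \ref{lem_phiV}, the tail being absorbed by the $V_\delta$-weighted structure of the norms. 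The main obstacle will be the clean bookkeeping of the $\lambda^k$-gain on the strong norm---especially controlling the flow-direction Lie derivatives (for which there is no pointwise contraction) and keeping all constants uniform in the non-compact space $X/\Gamma$---and this is precisely where the uniform estimates from the Finsler metric of Section \ref{sec_Finsler} and the $V_\delta$-control of Section \ref{section_recurrence} are essential.
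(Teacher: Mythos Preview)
Your overall strategy matches the paper's closely: Hennion's criterion applied to the pair $(\norm{\cdot}_k,\normdeux{\cdot}_k)$, the change of variables plus partition of unity to prove the basic bound $\norm{\boL_t f}_k\leq C\norm{f}_k$, the decomposition $\boL_{T_0}=\tilde\boL_1+\tilde\boL_2$ with the word expansion, the $\lambda^k$-type gain from hyperbolicity on words with enough $\tilde\boL_1$'s, the recurrence gain on words dominated by $\tilde\boL_2$, and the passage to $\boM^N$ via \eqref{formuleMN}. Two points, however, are not yet handled by what you wrote.

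\textbf{Flow-direction derivatives.} You correctly flag that there is no pointwise contraction in the $\omega$-direction, but your suggested fix (Finsler uniformity and $V_\delta$-control) does not touch this issue. The paper resolves it by a different mechanism: it splits the norm as $\norm{\cdot}_{k,s}^{A,B}+\norm{\cdot}_{k,\omega}^{A,B}$ and treats the $\omega$-part directly at the level of $\boM^N$ (Lemma~\ref{lem_omega}). The key trick is that $\boM^N$ is an integral in the flow variable, so for the bounded-time piece $\boM_1 f=\int_0^D h(t)\boL_t f\,dt$ one has $L_\omega(\boM_1 f)=h(D)\boL_D f-h(0)f-\int_0^D h'(t)\boL_t f\,dt$, which trades one $L_\omega$ for terms with one fewer derivative. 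Commuting the $\omega$-field to the last position (with errors controlled via Lemma~\ref{lem_equiv_norms} on the compact set $\{V_\delta\leq e^{(1-2\delta)D}\}$) then gives $\norm{\boM^N f}_{k,\omega}^{A,B}\leq 5C_0\norm{f}_k^{A,B}$ uniformly in $N$. The hyperbolicity/word argument is run only on $\norm{\cdot}_{k,s}^{A,B}$, where all test vector fields are stable and the $2^{-k/2}$ gain is genuine. Your write-up needs this separation.

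\textbf{Compactness of the weak term.} Your Lasota--Yorke reads $\norm{\boL_{T_0}^n f}_k\leq\mu^n\norm{f}_k+C_n\normdeux{f}_k$, and you then propose to recover compactness by truncating $f$ against $V_\delta$. That does not work: both $\norm{\cdot}_k$ and $\normdeux{\cdot}_k$ carry the \emph{same} $1/V_\delta(x)$ weight, so the tail of $\normdeux{(1-\psi)f}_k$ is not small relative to $\norm{f}_k$. The paper avoids this entirely: in any word $\gamma$ with $\Card\{i:\gamma_i=1\}\geq T/T_0$, the function $\psi_\gamma=\prod_j\psi_{\gamma_j}\circ g_{-(n-j)T_0}$ contains a factor $\partition_V\circ g_{-(n-j)T_0}$ and is therefore supported in a set compact mod $\Gamma$. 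Hence the weak-norm remainder is $\normdeux{\psi_\gamma f}_k$ with $\psi_\gamma$ compactly supported, and Proposition~\ref{prop_compactness} applies to the seminorm $f\mapsto\normdeux{\psi f}_k$ directly, with no truncation needed.
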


This section is devoted to the proof of this result. Until the end of
its proof, we will always specify if a constant depends on $k$, by
using a subscript as in $C_k$. Most constants will be independent of
$k$, and this will be very important for the argument, since $k$ will
be chosen only at the very end of the proof.

\smallskip

For technical reasons, it is convenient to work with another norm
that is equivalent to $\norm{\cdot}_k$. For $A\geq 1$, let us first define
a norm equivalent to $\norm{\cdot}_{C^k}$, by
$\norm{f}_{C_A^k}=\sum_{j=0}^k c_j(f)/ (j! A^j)$. Since $c_j(fg)\leq
\sum_{m=0}^j \binom j m c_m(f) c_{j-m}(g)$, it follows that
$\norm{fg}_{C_A^k} \leq \norm{f}_{C_A^k} \norm{g}_{C_A^k}$. Moreover,
for any fixed $C^k$ function $f$ and any $\epsilon>0$, if $A$ is
large enough, then $\norm{f}_{C_A^k} \leq (1+\epsilon) \sup |f|$. Let
us define $e_{k,\ell,\alpha}^A(f;x)$ like $e_{k,\ell,\alpha}(f;x)$,
but replacing the requirements $\norm{\phi}_{C^{k+\ell}}\leq 1$ and
$\norm{v_j}_{C^{k+\ell+1}} \leq 1$ (for the supremum taken in
\eqref{defekl}) by $\norm{\phi}_{C_A^{k+\ell}}\leq 1$ and
$\norm{v_j}_{C_A^{k+\ell+1}} \leq 1$.

We will need to deal separately with the case where all the
vector fields in the definition of $e_{k,\ell,\alpha}^A$ point
in the stable direction, and the case where at least one vector
field points in the flow direction. Let us therefore define
$e_{k,\ell, s}^A(f;x)=e_{k,\ell, \{s,\dots,s\}}^A(f;x)$, and
$e_{k,\ell,\omega}^A(f;x) = \sup e_{k,\ell,\alpha}^A(f;x)$,
where the supremum is over all $\alpha\in \{s,\omega\}^\ell$
different from $\{s,\dots,s\}$. Let $e_{k,\ell,s}^A(f)=\sup_x
e_{k,\ell,s}^A(f;x)$, and similarly for
$e_{k,\ell,\omega}^A(f)$. For $B\geq 1$, let
$\norm{f}^{A,B}_{k,s}=\sum_{\ell=0}^k B^{-\ell}
e_{k,\ell,s}^A(f)$, and similarly for
$\norm{f}^{A,B}_{k,\omega}$. Finally, let $\norm{f}_k^{A,B} =
\norm{f}_{k,s}^{A,B} + \norm{f}_{k,\omega}^{A,B}$. This norm is
equivalent to $\norm{f}_k$, but more convenient for a lot of
inequalities.

In the statements below, when we say ``for all large enough
$A,B$...'', we mean: if $A$ is large enough, then, if $B$ is
large enough (possibly depending on $A$), then... The
assumption ``for all large enough $k,A,B$'' should be
interpreted in the same way.

\bigskip

We now start the proof. Some arguments are borrowed from
\cite{gouezel_liverani}. We write $\boD$ for the set of
$C^\infty$ functions supported in a compact set mod $\Gamma$.
It contains the previously defined set $\boD^\Gamma$ of
functions in $\boD$ that are $\Gamma$-invariant.

\begin{lem}
\label{lemme_basic_estimate}
There exists a constant $C_0\geq 1$ satisfying the following
property. For every $k,\ell\in \N$ and every $\alpha\in
\{s,\omega\}^\ell$, if $A$ is large enough, then for every
$t\geq 0$, every $f\in \boD$ and every $x\in X$,
  \begin{equation}
  \label{borne_ekl_triviale}
  e_{k,\ell,\alpha}^A(f\circ g_t;x) \leq C_0 e_{k,\ell,\alpha}^A(f) \left( e^{-(1-2\delta)t} + 1/V_\delta(x)\right).
  \end{equation}
\end{lem}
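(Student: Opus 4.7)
The plan is to push the integral defining $e^A_{k,\ell,\alpha}(f\circ g_t;x)$ forward by $g_t$, turning it into an integral over the stretched set $g_t W^u_{1/200}(x)\subset W^u(g_t x)$. This stretched set is then broken up with a partition of unity and each piece is controlled by applying the definition of $e^A_{k,\ell,\alpha}(f;x_i)$ at a suitable base point $x_i\in g_t W^u_{1/200}(x)$.

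To start, I would iterate the identity $L_{\extension{v}}(h\circ g_t)=(L_{(g_t)_*\extension{v}} h)\circ g_t$ and change variables $z=g_t y$; since $(g_t)_*\mu_u=e^{-dt}\mu_u$, the integral in \eqref{defekl} becomes
\begin{equation*}
e^{-dt}\int_{g_t W^u_{1/200}(x)} (\phi\circ g_{-t})\cdot L_{(g_t)_*\extension{v_1}}\cdots L_{(g_t)_*\extension{v_\ell}} f\,d\mu_u.
\end{equation*}
Because $g_t$ preserves the stable and flow distributions and is affine along stable and unstable leaves, each $(g_t)_*\extension{v_j}$ is again a canonical extension of a vector field of the same type on $W^u(g_t x)$. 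I would then cover $g_t W^u_{1/200}(x)$ by small balls $W^u_r(x_i)$ with $r$ chosen so that $W^u_{1/100}(x_i)\subset g_t W^u_{1/100}(x)$ for each $x_i$; this is possible because $g_{-t}$ does not expand unstable distances (Lemma~\ref{weakly_hyperbolic}), so that any $x_i\in g_t W^u_{1/200}(x)$ has $g_{-t}x_i\in W^u_{1/200}(x)$ and $g_{-t} W^u_{1/100}(x_i)\subset W^u_{1/100}(g_{-t}x_i)$, which fits into $W^u_{1/100}(x)$ after choosing $r$ small. Proposition~\ref{prop_partition} provides a matching partition of unity $(\partition_i)$ with $c_j(\partition_i)\leq C(j!)^2$, so for $A$ large enough $\norm{\partition_i}_{C_A^{k+\ell}}\leq C$. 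Combining this with the estimate $c_j(\phi\circ g_{-t})\leq c_j(\phi)$ (also by Lemma~\ref{weakly_hyperbolic}, as $Dg_{-t}$ is non-expanding on $E^u$) yields $\norm{(\phi\circ g_{-t})\partition_i}_{C_A^{k+\ell}}\leq C$. The pushed-forward vector field satisfies $\norm{(g_t)_*\extension{v_j}}_{C_A^{k+\ell+1}(W^u_{1/100}(x_i))}\leq C$ as well: values are controlled by the non-expansion of $Dg_t$ on $E^s$ (and invariance of $\omega$), while derivatives along $E^u(g_t x)$ pull back to derivatives along $E^u(x)$ with contracted arguments, so that differentiation cannot amplify the norm.

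Applying the definition of $e^A_{k,\ell,\alpha}(f;x_i)$ to each piece of the partition and summing yields the upper bound
\begin{equation*}
C\,e^{-dt}\,e^A_{k,\ell,\alpha}(f)\sum_i V_\delta(x_i)\,\mu_u(W^u_{1/200}(x_i)).
\end{equation*}
Since $V_\delta$ is log-Lipschitz (Proposition~\ref{prop_Vdelta_OK_sur_Wu}), $\mu_u$ is doubling (Corollary~\ref{cor_doubling}), and the covering has bounded overlap, this sum is dominated by $Ce^{-dt}e^A_{k,\ell,\alpha}(f)\int_{g_t W^u_{1/100}(x)} V_\delta\,d\mu_u$. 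Changing variables once more, this integral equals $e^{dt}\int_{W^u_{1/100}(x)} V_\delta\circ g_t\,d\mu_u$, which by Proposition~\ref{prop_Vdelta_OK_sur_Wu} is bounded by $C\mu_u(W^u_{1/100}(x))(e^{-(1-2\delta)t}V_\delta(x)+1)$. Dividing by $V_\delta(x)\mu_u(W^u_{1/200}(x))$ and invoking doubling once more to compare $\mu_u(W^u_{1/100}(x))$ with $\mu_u(W^u_{1/200}(x))$ produces exactly \eqref{borne_ekl_triviale}.

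The main obstacle is the uniform $C_A^{k+\ell+1}$ control of $(g_t)_*\extension{v_j}$ in the noncompact space $X$: this relies both on the non-expansion in Lemma~\ref{weakly_hyperbolic} and on the affine character of $g_t$ along stable and unstable leaves, which ensures that $Dg_t$ is locally constant and iterated derivatives reduce cleanly to derivatives of $v_j$ with no spurious curvature terms. Moreover every constant must remain uniform in $k$, since $k$ is chosen only at the end of the global argument; this is the reason for working with the algebra norm $\norm{\cdot}_{C_A^k}$ and for allowing $A$ to grow with $k,\ell,\alpha$.
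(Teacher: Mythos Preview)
Your overall strategy matches the paper's exactly: push the integral forward by $g_t$, decompose $g_t W^u_{1/200}(x)$ with the partition of unity of Proposition~\ref{prop_partition}, bound each piece by $e^A_{k,\ell,\alpha}(f;x_i)\,V_\delta(x_i)\,\mu_u(W^u_{1/200}(x_i))$, and close with Proposition~\ref{prop_Vdelta_OK_sur_Wu} and Corollary~\ref{cor_doubling}. Your handling of $c_m(w_j)$ via the two applications of Lemma~\ref{weakly_hyperbolic} (non-expansion of $Dg_t$ on $E^s$ for the value, non-expansion of $Dg_{-t}$ on $E^u$ for the arguments) is in fact a slight streamlining of the paper's computation, which writes out the explicit factor $e^{-t}$ and then invokes Proposition~\ref{norm_slowly_varies} to compare $\norm{\cdot}_y$ and $\norm{\cdot}_z$.

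There is, however, one genuine gap: the domain of definition of the pushed-forward vector fields. You assert that ``choosing $r$ small'' arranges $W^u_{1/100}(x_i)\subset g_t W^u_{1/100}(x)$, but the radius $1/100$ here is fixed by the definition of $e^A_{k,\ell,\alpha}$ and has nothing to do with the radius $r$ of your covering balls. For $x_i$ near the boundary of $g_t W^u_{1/200}(x)$ the triangle inequality only yields $g_{-t}W^u_{1/100}(x_i)\subset W^u_{1/200+1/100}(x)=W^u_{3/200}(x)$, which overshoots $W^u_{1/100}(x)$; so $(g_t)_*v_j$ is simply not defined on all of $W^u_{1/100}(x_i)$, and you cannot directly apply the definition of $e^A_{k,\ell,\alpha}(f;x_i)$. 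The paper's remedy (which you are missing) is to first multiply each $v_j$ by a cutoff $\partition$ equal to $1$ on $W^u_{1/200}(x)$ and compactly supported in $W^u_{1/100}(x)$ (Lemma~\ref{lem_cutonW}); then $w_j=(g_t)_*(\partition\, v_j)$ is compactly supported in $g_t W^u_{1/100}(x)$ and may be extended by zero to any $W^u_{1/100}(x_i)$. For $A$ large enough this costs only a harmless factor $\norm{\partition}_{C_A^{k+\ell+1}}\leq 2^{1/\ell}$ per vector field, so the product over $j=1,\dots,\ell$ contributes at most a factor~$2$.
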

\begin{proof}
We first give the proof for $\ell=0$.

Fix some point $x$, and some compactly supported function $\phi
: W^u_{1/200}(x) \to \C$ with $\norm{\phi}_{C^k_A}\leq 1$. We
want to estimate $\int_{W^u_{1/200}(x)} \phi(y) \cdot f\circ
g_t(y) \dd\mu_u(y)$. We change variables, letting $z=g_t(y)$.
By Proposition \ref{prop_local_product}, the resulting jacobian
has the form $e^{-dt}$ for some $d>0$. The integral becomes an
integral over $g_t(W^u_{1/200}(x))$. Proposition
\ref{prop_partition} provides a partition of unity
$(\partition_i)_{i\in I}$ on this set, with good properties. In
particular, $\partition_i$ is supported in a ball
$W^u_{1/200}(x_i)$. The integral becomes
  \begin{equation*}
  \sum_i \int_{W^u_{1/200}(x_i)} \partition_i(z) \phi(g_{-t}z)\cdot f(z) \, e^{-dt}\dd\mu_u(z).
  \end{equation*}
Since $g_{-t}$ is affinely contracting along $W^u$,
$\norm{\phi\circ g_{-t}}_{C^k_A}\leq \norm{\phi}_{C^k_A}\leq
1$. Therefore, the $C^k_A$ norm of $\partition_i \cdot
\phi\circ g_{-t}$ is bounded by $\norm{\partition_i}_{C^k_A}$.
If $A$ is large enough, this is at most $2$ (since the
coefficients $c_m$ of $\partition_i$, for $1\leq m\leq k$, are
uniformly bounded by Proposition \ref{prop_partition}).
Therefore, the above integral is bounded by
  \begin{multline}
  \label{above integral}
  \sum_i C e_{k,0}^A(f) V_\delta(x_i) \mu_u(W^u_{1/200}(x_i)) e^{-dt}
  \\
  \leq   C e_{k,0}^A(f) \sum_i \int_{W^u_{1/200}(x_i)} V_\delta(z) \, e^{-dt}\dd\mu_u(z),
  \end{multline}
since $\log V_\delta$ is Lipschitz by Proposition
\ref{prop_Vdelta_OK_sur_Wu}. The covering multiplicity of the sets
$W^u_{1/200}(x_i)$ is uniformly bounded, by Proposition
\ref{prop_partition}. Moreover, all those sets are included in $\{ z
\st d(z, g_t (W^u_{1/200}(x))) \leq 1/200\}$, which is itself
included in $g_t(W^u_{1/100}(x))$ since $g_{-t}$ contracts the
distance along $W^u$. Therefore, \eqref{above integral} is bounded by
  \begin{equation*}
  C e_{k,0}^A(f) \int_{g_t(W^u_{1/100}(x))} V_\delta(z) \, e^{-dt} \dd\mu_u(z)
  =Ce_{k,0}^A(f) \int_{W^u_{1/100}(x)} V_\delta(g_t y) \dd\mu_u(y).
  \end{equation*}
By Proposition \ref{prop_Vdelta_OK_sur_Wu}, this is bounded by
$C e_{k,0}^A(f) \mu_u(W^u_{1/100}(x))
(e^{-(1-2\delta)t}V_\delta(x) + 1)$. Finally,
  \begin{multline*}
  \frac{1}{V_\delta(x)} \frac{1}{\mu_u(W^u_{1/200}(x))}\left|\int_{W^u_{1/200}(x)}
  \phi \cdot f \circ g_t \dd\mu_u(y) \right|
  \\
  \leq C e_{k,0}^A (f) \frac{\mu_u(W^u_{1/100}(x))}{\mu_u(W^u_{1/200}(x))} \left( e^{-(1-2\delta)t} + 1/V_\delta(x)\right).
  \end{multline*}
The ratio of the measures is bounded, by Corollary
\ref{cor_doubling}. This proves \eqref{borne_ekl_triviale} when
$\ell=0$.

Assume now $\ell>0$, we have to estimate
  \begin{equation}
  \label{qsmiufqsdfui}
  \int_{W^u_{1/200}(x)} \phi \cdot L_{\extension{v_1}}\cdots L_{\extension{v_\ell}}(f\circ g_t)
  \dd\mu_u,
  \end{equation}
where the vector fields $v_j$ are defined on $W^u_{1/100}(x)$,
satisfy $\norm{v_j}_{C^{k+\ell+1}_A} \leq 1$, and point in the
direction $E^s$ or $\R\omega$. Consider a function $\partition$ equal
to $1$ in $W^u_{1/200}(x)$ and compactly supported in
$W^u_{1/100}(x)$ (as constructed in Lemma \ref{lem_cutonW}), and
define a new vector field $v_{j,1}=\partition\cdot v_j$. It coincides
with $v_j$ on $W^u_{1/200}(x)$, therefore the integral
\eqref{qsmiufqsdfui} can also be written using $v_{j,1}$ instead of
$v_j$. Moreover, if $A$ is large enough, the definition of the
$C_A^{k+\ell+1}$ norm ensures that
  \begin{equation*}
  \norm{v_{j,1}}_{C^{k+\ell+1}_A}
  = \norm{ \partition \cdot v_j}_{C^{k+\ell+1}_A}
  \leq \norm{ \partition}_{C^{k+\ell+1}_A}
	   \norm{ v_j       }_{C^{k+\ell+1}_A}
  \leq 2^{1/\ell}.
  \end{equation*}

Let $w_j=(g_t)_* v_{j,1}$. Since the extension
$\extension{w_j}$ is defined using the affine structure and the
flow direction, which are invariant under the affine flow
$g_t$, it follows that $\extension{w_j}=(g_t)_*
\extension{v_{j,1}}$. Therefore,
  \begin{equation*}
  L_{\extension{v_{1,1}}}\cdots L_{\extension{v_{\ell,1}}}(f\circ g_t)(y)
  =L_{\extension{w_1}}\cdots L_{\extension{w_\ell}} f (g_t y).
  \end{equation*}
We claim that the vector fields $w_j$ are bounded by
$2^{1/\ell}$ in $C^{k+\ell+1}_A$ (even better, $c_m(w_j) \leq
c_m(v_{j,1})$ for all $m$). We can then proceed as in the
$\ell=0$ case, getting simply an additional error factor equal
to $\prod_{j=1}^\ell \norm{w_j}_{C^{k+\ell+1}_A}\leq 2$. One
should pay attention to the fact that, with the above
definition, the vector fields $w_j$ are not always defined on
all the balls $W_{1/100}(x_i)$, for those $x_i$ that are close
to the boundary of $g_t( W_{1/200}(x))$. This is not a problem
since $w_j$ is compactly supported in $g_t( W_{1/100}(x))$ by
construction: one may therefore extend it by $0$ wherever it is
not defined (this is why we had to use $v_{j,1}$ and not $v_j$
in this construction).

It remains to check the formula $c_m(w_j) \leq c_m(v_{j,1})$. It
comes from the fact that the definition of $c_m$ only involves
differentiation along directions in $W^u$, and that $g_{-t}$ is
contracting along this manifold. If $\alpha_j=\omega$, i.e., $v_j$
points in the flow direction, this estimate is
straightforward.
Let us therefore assume that $\alpha_j=s$, i.e., $v_j$ points in the
stable direction. Consider a point $z$ in the domain of definition of
$w_j$, and $m$ vectors $u_1,\dots,u_m$ at that point which are
tangent to $W^u(x)$, with $\norm{u_m}_z \leq 1$. Write $y=g_{-t}z$.
We get
  \begin{equation*}
  D^m w_j(z; u_1, \dots, u_m)
  =e^{-t} D^m v_{j,1}( g_{-t}z; D g_{-t}(z)\cdot u_1,\dots, Dg_{-t}(z)\cdot u_m).
  \end{equation*}
Therefore,
  \begin{align*}
  \norm{ D^m w_j(z; u_1, \dots, u_m)}_y & = e^{-t} \norm{D^m v_{j,1}
	( g_{-t}z; D g_{-t}(z)\cdot u_1,\dots, D g_{-t}(z)\cdot u_m)}_y
  \\&
  \leq e^{-t} c_m(v_{j,1}) \norm{D g_{-t}(z) u_1}_y \cdots \norm{D g_{-t}(z) u_m}_y.
  \end{align*}
Since the differential $D g_{-t}(z)$ contracts in the direction
of $W^u$ by Lemma \ref{weakly_hyperbolic}, we have $\norm{D
g_{-t}(z) u_n}_y \leq \norm{u_n}_z\leq 1$. This yields
  \begin{equation}
  \label{mlwkjxcv}
  \norm{ D^m w_j(z; u_1, \dots, u_m)}_y \leq
  e^{-t} c_m(v_{j,1}).
  \end{equation}
We are interested in bounding $\norm{ D^m w_j(z; u_1, \dots,
u_m)}_z$. Since $d(y,z)\leq |t|$ by Lemma \ref{SL_lipschitz},
Proposition \ref{norm_slowly_varies} shows that the ratio
between $\norm{\cdot}_y$ and $\norm{\cdot}_z$ is at most $e^t$.
This cancels the factor $e^{-t}$ in \eqref{mlwkjxcv}, and we
get the conclusion.
\end{proof}

\begin{cor}
For every $k\in \N$, for every large enough $A$ and $B$, for
every $t\geq 0$ and every $f\in \boD$, we have $\norm{f\circ
g_t}_{k}^{A,B} \leq 2C_0 \norm{f}_{k}^{A,B}$.
\end{cor}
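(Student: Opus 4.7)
The plan is to derive the corollary directly from Lemma \ref{lemme_basic_estimate} applied term-by-term to the weighted sums defining $\norm{\cdot}_k^{A,B}$. The only nontrivial observation is that $V_\delta(x)\geq 1$ by construction, so for every $t\geq 0$ and every $x\in X$ the bracket in \eqref{borne_ekl_triviale} satisfies
\begin{equation*}
e^{-(1-2\delta)t} + 1/V_\delta(x) \leq 2,
\end{equation*}
which is where the factor $2$ (and the constant $C_0$) in $2C_0$ will come from.

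Concretely, I would fix $k$ and choose $A$ large enough so that the conclusion of Lemma \ref{lemme_basic_estimate} holds \emph{simultaneously} for all the finitely many pairs $(\ell,\alpha)$ with $0\le \ell\le k$ and $\alpha\in\{s,\omega\}^\ell$; this is possible because there are only $\sum_{\ell=0}^k 2^\ell$ such pairs, and for each the lemma demands $A$ be at least some threshold $A_0(k,\ell,\alpha)$, so I take $A$ above the (finite) maximum. For any such $A$, the lemma combined with the bound above yields, for every $x\in X$,
\begin{equation*}
e_{k,\ell,\alpha}^A(f\circ g_t;x) \leq 2C_0\, e_{k,\ell,\alpha}^A(f),
\end{equation*}
and taking the supremum in $x$ gives $e_{k,\ell,\alpha}^A(f\circ g_t) \leq 2C_0\, e_{k,\ell,\alpha}^A(f)$ for each admissible $(\ell,\alpha)$. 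In particular, taking $\alpha=(s,\dots,s)$ controls $e_{k,\ell,s}^A(f\circ g_t)$, and taking the supremum over the remaining $\alpha$'s controls $e_{k,\ell,\omega}^A(f\circ g_t)$, with the same factor $2C_0$ in both cases (since $C_0$ does not depend on $\alpha$).

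Finally, multiplying by $B^{-\ell}$ and summing over $\ell=0,\dots,k$ gives $\norm{f\circ g_t}_{k,s}^{A,B}\le 2C_0\norm{f}_{k,s}^{A,B}$ and similarly for the $\omega$-component, so adding the two yields $\norm{f\circ g_t}_k^{A,B}\le 2C_0\norm{f}_k^{A,B}$, as claimed. Note that $B$ plays no role at this stage: any $B\geq 1$ works, so the "large enough $B$" clause in the statement is vacuous here and is presumably included only for uniformity with subsequent lemmas in the section where $B$ will be tuned. There is no real obstacle in the proof of this corollary; the substantive work was done in establishing Lemma \ref{lemme_basic_estimate}, and the only thing to double-check is that the choice of $A$ can be made uniformly in the finitely many $(\ell,\alpha)$, which is automatic.
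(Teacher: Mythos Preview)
Your proposal is correct and follows essentially the same approach as the paper: use $V_\delta\geq 1$ to bound the bracket in \eqref{borne_ekl_triviale} by $2$, take the supremum over $x$, and then sum over $\ell$ and $\alpha$ using the definition of $\norm{\cdot}_k^{A,B}$. Your remarks about the uniform choice of $A$ over the finitely many $(\ell,\alpha)$ and the irrelevance of $B$ at this stage are accurate elaborations of what the paper leaves implicit.
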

\begin{proof}
The function $V_\delta$ is bounded from below by $1$. Taking
the supremum over $x$ in \eqref{borne_ekl_triviale}, we get
$e_{k,\ell,\alpha}^A(f\circ g_t) \leq 2C_0
e_{k,\ell,\alpha}^A(f)$. The result follows from the definition
of the $\norm{\cdot}_{k}^{A,B}$ norm.
\end{proof}

It follows from this corollary that we can define the operator
$\boM$ on $\boDb$. Let $N\in \N$, we will study the norm of
$\boM^N$. We have
  \begin{equation}
  \label{formuleMN}
  \boM^N f=\int_{t=0}^\infty
  \frac{t^{N-1}}{(N-1)!} e^{-4\delta t} \boL_t f \dd t.
  \end{equation}

We will estimate differently the contributions $\norm{\boM^n
f}_{k,\omega}^{A,B}$ and $\norm{\boM^n f}_{k,s}^{A,B}$ to
$\norm{\boM^n f}_{k}^{A,B}$. Let us first deal with the former.

\begin{lem}
\label{lem_omega}
For any $N\in \N$, for any $k$, if $A$ and $B$ are large
enough, we have
  \begin{equation*}
  \norm{\boM^N f}_{k,\omega}^{A,B} \leq 5C_0 \norm{f}_k^{A,B}.
  \end{equation*}
\end{lem}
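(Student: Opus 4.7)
The plan rests on three properties of the flow-direction vector field $\omega$ and the resolvent $\boM = (4\delta - L_\omega)^{-1}$. First (\emph{commutation}): every extension $\extension{v_j}$ built in Section~\ref{par_construct_norms} is $g_t$-invariant—the stable case because the extension is defined via $Dg_t$; the flow-direction case because, for $v_j^\omega=\psi\omega$, the coefficient $\psi$ is constant along flow lines, so $[\omega,\psi\omega]=(L_\omega\psi)\omega=0$. Consequently $L_\omega$ commutes with every $L_{\extension{v_j}}$, and with $\boM$. Second (\emph{resolvent identity}): integration by parts in the integral defining $\boM^N$, using $L_\omega\boL_t=\partial_t\boL_t$, yields
$$L_\omega\boM^N f \;=\; 4\delta\,\boM^N f - \boM^{N-1}f \;=\; -\int_0^\infty \partial_t\!\Bigl[\tfrac{t^{N-1}}{(N-1)!}e^{-4\delta t}\Bigr]\boL_t f \dd t$$
for $N\geq 2$ (with boundary term $-f$ when $N=1$). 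Third (\emph{factorization}): when $\alpha_{j_0}=\omega$, the field $v_{j_0}=\psi_{j_0}\omega$ satisfies $L_{\extension{v_{j_0}}}=\psi_{j_0}L_\omega$ as operators.

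For $\ell\geq 1$ and $\alpha$ with at least one $\omega$-coordinate $j_0$, I would extract $L_\omega$ from $L_{\extension{v_{j_0}}}$ and commute it to act directly on $\boM^N f$. The commutators $[L_{\extension{v_i}},L_{\extension{v_{j_0}}}]=L_{(L_{\extension{v_i}}\psi_{j_0})\omega}$ are again of the form (smooth function)$\cdot L_\omega$, so iterating the commutation and then applying the resolvent identity yields a finite sum
$$L_{\extension{v_1}}\cdots L_{\extension{v_\ell}}\boM^N f \;=\; \sum_{\textup{finite}}(\text{smooth factor in }\psi_{j_0}\text{ and derivatives})\cdot L_{\extension{w_1}}\cdots L_{\extension{w_{\ell-1}}}\bigl((4\delta\boM^N-\boM^{N-1})f\bigr),$$
where each $w_m$ is either an original $v_j$ with $j\neq j_0$ or a commutator field (still flow-direction, with one fewer derivative on $\psi_{j_0}$). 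Each smooth factor is controlled by $\norm{v_{j_0}}_{C_A^{k+\ell+1}}\leq 1$ once $A$ is large, and can be absorbed into $\phi$ to produce a new test function $\tilde\phi$ of bounded $C_A^{k+\ell-1}$-norm.

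Each resulting integral is then bounded using the integration-by-parts representation of $L_\omega\boM^N f$ combined with Lemma~\ref{lemme_basic_estimate}. The crucial input is that the effective time-weight is the derivative $\partial_t[t^{N-1}e^{-4\delta t}/(N-1)!]$, and an explicit computation gives
$$\int_0^\infty \Bigl|\partial_t\!\Bigl[\tfrac{t^{N-1}}{(N-1)!}e^{-4\delta t}\Bigr]\Bigr|\,e^{-(1-2\delta)t} \dd t \;=\; (1+6\delta)(1+2\delta)^{-N} \;\leq\; 1+6\delta,$$
uniformly in $N\geq 1$. Summing the finitely many terms in the Leibniz/commutator expansion and then performing the $B^{-\ell}$-weighted sum over $\ell=1,\dots,k$ produces $\norm{\boM^N f}_{k,\omega}^{A,B}\leq 5C_0\norm{f}_k^{A,B}$, provided $A$ and $B$ are taken large enough to dominate the combinatorial constants (which depend only on $k$).

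The main obstacle is obtaining estimates that are \emph{uniform in $N$}: the naive bound $\norm{\boM^N}\sim C(4\delta)^{-N}$ grows exponentially, in line with the spectral radius $r(\boM)=1/(4\delta)>1$. The uniformity here is possible only because every flow-direction Lie derivative ``undoes'' one power of $\boM$ via the resolvent identity, exposing the $L^1$-integrable derivative of the time weight. The subtlest bookkeeping point is the $1/V_\delta(x)$ contribution in Lemma~\ref{lemme_basic_estimate}, whose integral against $|\partial_t[\cdot]|$ is not uniformly bounded; this piece must be controlled against the $V_\delta(x)^{-1}$ prefactor in the definition of $e^A_{k,\ell,\omega}$ together with the Lipschitz regularity of $\log V_\delta$ recorded in Proposition~\ref{prop_Vdelta_OK_sur_Wu}, which I expect to be the most delicate part of the argument to make rigorous.
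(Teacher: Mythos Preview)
Your plan—extract $L_\omega$ via $L_{\extension{v_{j_0}}}=\Psi_{j_0}L_\omega$ and use $L_\omega\boM^N=4\delta\boM^N-\boM^{N-1}$ to drop to $\ell-1$ derivatives—is sound, and in fact cleaner than you realize: since every $\extension{v_i}$ is $g_t$-invariant and the extended coefficient $\Psi_{j_0}$ is constant along both $W^s$ and the flow lines, one has $L_{\extension{v_i}}\Psi_{j_0}=0$ and $[L_{\extension{v_i}},L_\omega]=0$, so \emph{all} the commutators $[L_{\extension{v_i}},L_{\extension{v_{j_0}}}]$ vanish identically and your ``finite sum'' collapses to a single term.

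The gap is a misreading of the statement. The lemma fixes $N$ and $k$ first and only then asks for $A,B$ large; in the application (proof of Theorem~\ref{thm_control_spectral_radius}) $N$ is chosen once and for all before $k,A,B$. No uniformity in $N$ is required. With this in hand, the $1/V_\delta(x)$ piece you flag as ``most delicate'' is trivial: use the crude bound $V_\delta\geq 1$ in Lemma~\ref{lemme_basic_estimate} to get $e^A_{k,\ell,\omega}(\boM^N f)\leq C_N\,e^A_{k,\ell-1}(f)$ with $C_N=O((4\delta)^{-N})$, sum with weights $B^{-\ell}$ to obtain $\norm{\boM^Nf}_{k,\omega}^{A,B}\leq C_N B^{-1}\norm{f}_k^{A,B}$, and take $B\geq C_N/(5C_0)$. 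If instead you insist on $N$-uniformity, your argument cannot close: $\int_0^\infty\bigl|\partial_t[t^{N-1}e^{-4\delta t}/(N-1)!]\bigr|\,\dd t$ grows like $(4\delta)^{-(N-1)}/\sqrt N$, and neither the $V_\delta(x)^{-1}$ prefactor nor the Lipschitz bound on $\log V_\delta$ helps at points of the compact core where $V_\delta(x)=1$.

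The paper's route is different. It proves the intermediate inequality $e^A_{k,\ell,\omega}(\boM^Nf)\leq C_{N,k,\ell,A}\sum_{\ell'<\ell}e^A_{k,\ell'}(f)+4C_0\,e^A_{k,\ell}(f)$, keeping the \emph{same-level} constant $4C_0$ genuinely uniform in $N$. To do so it splits $\boM^N=\int_0^D+\int_D^\infty$ (with $D$ depending on $N$) and splits space into $\{V_\delta\geq e^{(1-2\delta)D}\}$ and its compact complement; the integration-by-parts identity is invoked only on the compact piece, where the $\omega$-field is moved to the last slot by hand and the resulting non-canonical commutator terms are controlled via Lemma~\ref{lem_equiv_norms} (which requires compactness). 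Your global-commutation approach, once the $N$-dependence of $B$ is accepted, avoids both the time cutoff and the space splitting and is shorter.
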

\begin{proof}
We will prove that, for any $N,k,\ell$ and $A$ sufficiently large,
there exists a constant $C_{N,k,\ell,A}$ such that
  \begin{equation}
  \label{mlkjvxwlvcjlwlxjcvv}
  e^A_{k,\ell,\omega}(\boM^N f) \leq C_{N,k,\ell,A} \sum_{\ell'<\ell} e_{k,\ell'}^A(f)
  + 4C_0 e^A_{k,\ell}(f).
  \end{equation}
Taking $B$ much larger than all $C_{N,k,\ell,A}$ for $0\leq
\ell \leq k$, this implies directly the statement of the lemma.

Let us fix $N, k,\ell,A$. We split $\boM^N$ as the sum of
$\boM_1\coloneqq \int_0^D \frac{t^{N-1}}{(N-1)!} e^{-4\delta
t}\boL_t \dd t$ and $\boM_2\coloneqq \int_{D}^\infty
\frac{t^{N-1}}{(N-1)!} e^{-4\delta t}\boL_t \dd t$, where $D$
is suitably large.

Lemma \ref{lemme_basic_estimate} shows that
$e^A_{k,\ell,\omega}(\boL_t f) \leq 2 C_0 e^A_{k,\ell,\omega}(f)$.
Hence, if $D$ is large enough (depending on $N$), we have
$e^A_{k,\ell,\omega}(\boM_2 f) \leq C_0 e^A_{k,\ell,\omega}(f)$. The
term $\boM_2$ is therefore not a problem to prove
\eqref{mlkjvxwlvcjlwlxjcvv}.

Let us handle $\boM_1$. Consider first a point $x$ such that
$V_\delta(x) \geq e^{(1-2\delta)D}$. For such a point $x$, Lemma
\ref{lemme_basic_estimate} gives $e^A_{k,\ell,\omega}(\boL_t f; x)
\leq 2C_0 e^A_{k,\ell,\omega}(f) e^{-(1-2\delta)t}$ for $t\leq D$. In
particular,
  \begin{align*}
  e^A_{k,\ell,\omega}(\boM_1 f;x)
  &
  \leq \int_{t=0}^D \frac{t^{N-1}}{(N-1)!} e^{-4\delta t}
	e^A_{k,\ell,\omega}(\boL_t f;x)\dd t
  \\&
  \leq 2C_0 \int_{t=0}^D \frac{t^{N-1}}{(N-1)!} e^{-4\delta t}
	e^A_{k,\ell,\omega}(f) e^{-(1-2\delta)t}\dd t
  \leq 2C_0 e^A_{k,\ell,\omega}(f)
  \end{align*}
since $\int_{t=0}^\infty \frac{t^{N-1}}{(N-1)!}
e^{-(1+2\delta)t}\dd t \leq \int_{t=0}^\infty
\frac{t^{N-1}}{(N-1)!} e^{-t}\dd t = 1$. This concludes the
proof for such points $x$.

It remains to consider points $x$ with $V_\delta(x) \leq
e^{(1-2\delta)D}$. This set is very large if $D$ is large, but it is
compact mod $\Gamma$. Fix such a point $x$, we have to estimate
integrals of the form
  $
  \int_{W^u_{1/200}(x)} \phi \cdot L_{\extension{v_1}}\cdots L_{\extension{v_\ell}}(\boM_1 f)\dd\mu_u
  $,
where $\norm{\phi}_{C^{k+\ell}_A}\leq 1$ and
$\norm{v_j}_{C^{k+\ell+1}_A}\leq 1$, and at least one of the
vector fields $v_j$ points in the flow direction.

To begin, assume that the last vector field $v_\ell$ points in
the flow direction, i.e., $v_\ell(y)=\psi(y) \omega(y)$ for
some function $\psi$ with $\norm{\psi}_{C^{k+\ell+1}_A}\leq 1$.
In the expression $L_{\extension{v_1}}\cdots
L_{\extension{v_{\ell-1}}}( \psi L_\omega (\boM_1 f))$, if we
use at least one of the Lie derivatives to differentiate
$\psi$, we obtain a term bounded by $C_{k,\ell,A}
e^A_{k,\ell'}(\boM_1 f)$ for some $\ell'<\ell$. This is bounded
by $C_{N,k,\ell,A,D} e^A_{k,\ell'}(f)$ by Lemma
\ref{lemme_basic_estimate}. This error term is compatible with
\eqref{mlkjvxwlvcjlwlxjcvv}. The remaining term is
  $
  \psi L_{\extension{v_1}}\cdots
  L_{\extension{v_{\ell-1}}}( L_\omega
  (\boM_1 f))
  $.
Since $\boM_1 f=\int_{t=0}^D h(t) \boL_t f \dd t$ for some
smooth function $h$, we have $L_\omega(\boM_1 f)=h(D) \boL_D f
- h(0) f-\int_{t=0}^D h'(t)\boL_t f\dd t$. Therefore, the
integral we are studying can be bounded in terms of $\ell-1$
derivatives of $f$ (or images of $f$ under operators $\boL_t$),
and this is bounded by $C_{N,k,\ell,A,D} e^A_{k,\ell - 1}(f)$.
This error term is again compatible with
\eqref{mlkjvxwlvcjlwlxjcvv}.

Assume now that one of the vector fields $v_j$ points in the
flow direction, but that it is not necessarily the last one. We
can exchange the vector fields to put the vector field
$\extension{v_j}$ in the last position and conclude as above.
Since $[L_{w_1}, L_{w_2}]=L_{[w_1,w_2]}$, the additional error
corresponds to the integration of $\ell-1$ derivatives of
$\boM_1 f$ against a $C^{k+\ell}$ function, but one of the
vector fields is not the canonical extension of a vector field
defined on $W^u_{1/100}(x)$. Since we work in the set
$\{V_\delta \leq e^{(1-2\delta)D}\}$ which is compact mod
$\Gamma$, Lemma \ref{lem_equiv_norms} shows that this error is
bounded in terms of $\sup_{\ell'<\ell} e_{k,\ell'}(f)$, and is
again compatible with \eqref{mlkjvxwlvcjlwlxjcvv}.
\end{proof}

It remains to study $\norm{\boM^N f}_{k,s}^{A,B}$. We will
rather estimate $\norm{\boL_t f}_{k,s}^{A,B}$ if $t$ is large
enough, this will readily gives estimates for $\norm{\boM^N
f}_{k,s}^{A,B}$ by \eqref{formuleMN}.

Let us fix some constants. First, we recall that $C_0$ has been
defined in Lemma \ref{lemme_basic_estimate}. Let $T_0>0$ be
large enough so that $40C_0 \leq e^{\delta T_0}$. Let
$V=2e^{(3-2\delta)T_0}$, and define
  \begin{equation}
  \label{defK}
  K=\{x\in \Teich_1 \st V_\delta(x) \leq 4V e^{2T_0}\}.
  \end{equation}
This set is compact mod $\Gamma$. Finally, applying Proposition
\ref{pliss_hyperbolicity} to $K$, we get a time $T=T(K)$.

We will study the operator $\boL_{nT_0}$, for all $n$ large
enough so that $nT_0 \geq T/\delta$. By Lemma \ref{lem_phiV},
we can define a $C^\infty$ function $\partition_V$ such that
$\partition_V(x)=1$ if $V_\delta(x)\leq V$ and
$\partition_V(x)=0$ if $V_\delta(x)\geq 2V$. Write
$\psi_1=\partition_V$ and $\psi_2 = 1-\partition_V$ so that
$\psi_1+\psi_2=1$. We decompose $\boL_{T_0}(f) =
\boL_{T_0}(\psi_1 f)+ \boL_{T_0}(\psi_2 f) = \tilde\boL_1 f +
\tilde \boL_2 f$. Therefore, $\boL_{nT_0} = \sum_{\gamma \in
\{1,2\}^n} \tilde\boL_{\gamma_1} \cdots \tilde
\boL_{\gamma_n}$.

We first give a lemma ensuring that the multiplication by
$\partition_V$ or $1-\partition_V$ in the definition of
$\tilde\boL_1$ and $\tilde\boL_2$ is not harmful, and then we will
turn to the study of $\tilde\boL_{\gamma_1} \cdots \tilde
\boL_{\gamma_n}$ for $\gamma \in \{1,2\}^n$. We will handle in Lemma
~ \ref{lem_close_infinity} the case where most $\gamma_i$ are equal
to $2$ (i.e., most time is spent close to infinity, and we can use
the good recurrence estimates of
Proposition~\ref{prop_Vdelta_OK_sur_Wu}), and in
Lemma~\ref{maincontraction} the case where a definite proportion of
the $\gamma_i$ is equal to $1$ (i.e., some time is spent in the
compact set $K$, and we can take advantage of the hyperbolicity of
the flow there).

\begin{lem}
\label{largeB_hence_OK}
Let $k\in \N$, and let $\psi:X \to [0,1]$ be a $C^{2k}$
function supported in a compact set mod $\Gamma$. If $A$ and
$B$ are large enough, then for any $f\in \boD$, we have
$\norm{\psi f}_{k,s}^{A,B} \leq 3 \norm{f}_{k,s}^{A,B}$.
\end{lem}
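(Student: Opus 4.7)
The plan is to isolate a dominant contribution in each $e^A_{k,\ell,s}(\psi f)$ via the Leibniz rule and control the remaining terms by lower-order seminorms, then exploit the $B^{-\ell}$ weighting to absorb the lower-order pieces. Fix $x\in X$, $0\leq\ell\leq k$, an admissible test function $\phi$ with $\norm{\phi}_{C^{k+\ell}_A}\leq 1$ compactly supported in $W^u_{1/200}(x)$, and admissible stable vector fields $v_1,\dots,v_\ell$ on $W^u_{1/100}(x)$ with $\norm{v_j}_{C^{k+\ell+1}_A}\leq 1$. Iterating $L_{\extension{v}}(\psi f)=(L_{\extension{v}}\psi)f+\psi(L_{\extension{v}}f)$ expands $L_{\extension{v_1}}\cdots L_{\extension{v_\ell}}(\psi f)$ as a sum of $2^\ell$ terms indexed by the subset $S\subset\{1,\dots,\ell\}$ of Lie derivatives falling on $\psi$.

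For the main term ($S=\emptyset$) I would absorb $\psi$ into the test function, treating $\phi\psi$ as a new test function on $W^u_{1/200}(x)$. Submultiplicativity of the $C^{k+\ell}_A$ norm together with the observation already recorded in this section, that $\norm{\psi}_{C^{k+\ell}_A}\to\sup|\psi|\leq 1$ as $A\to\infty$ for fixed smooth $\psi$, gives $\norm{\phi\psi}_{C^{k+\ell}_A}\leq 1+\epsilon$ once $A$ is large enough, for any preassigned $\epsilon>0$. Hence this term contributes at most $(1+\epsilon)\,e^A_{k,\ell,s}(f)$ to $e^A_{k,\ell,s}(\psi f;x)$.

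Each remaining term ($j:=|S|\geq 1$) factors as $\phi\cdot(L_{\extension{v_{i_1}}}\cdots L_{\extension{v_{i_j}}}\psi)\cdot(L_{\extension{v_{i'_1}}}\cdots L_{\extension{v_{i'_{\ell-j}}}}f)$, and I would collapse the first two factors into a single test function $\tilde\phi=\phi\cdot L_{\extension{v_{i_1}}}\cdots L_{\extension{v_{i_j}}}\psi\big|_{W^u_{1/200}(x)}$. Controlling $\norm{\tilde\phi}_{C^{k+\ell-j}_A}$ is the delicate step: computing $k+\ell-j$ derivatives along $W^u$ of an iterated Lie derivative of $\psi$ along $j$ extended stable vector fields forces $\psi$ to absorb up to $j+(k+\ell-j)=k+\ell\leq 2k$ derivatives in mixed directions, which is precisely why the hypothesis $\psi\in C^{2k}$ is needed; the smoothness of the extensions $\extension{v_{i_p}}$ in transverse directions is handled using the uniform smoothness of the stable and flow foliations on $\supp\psi$, which is available because that support is compact mod $\Gamma$. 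Granted this, $\norm{\tilde\phi}_{C^{k+\ell-j}_A}\leq C_{\psi,k,A}$, and since the remaining $v_{i'_p}$ still satisfy $\norm{v_{i'_p}}_{C^{k+(\ell-j)+1}_A}\leq 1$, this term contributes at most $C_{\psi,k,A}\,e^A_{k,\ell-j,s}(f)$.

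Assembling the $2^\ell$ contributions, $e^A_{k,\ell,s}(\psi f)\leq (1+\epsilon)\,e^A_{k,\ell,s}(f)+C_{\psi,k,A}\sum_{\ell'<\ell}e^A_{k,\ell',s}(f)$. Weighting by $B^{-\ell}$ and summing over $\ell=0,\dots,k$ yields $(1+\epsilon)\norm{f}^{A,B}_{k,s}$ from the main piece, while the lower-order piece is bounded by $(2C_{\psi,k,A}/B)\norm{f}^{A,B}_{k,s}$ via the usual double-sum bookkeeping $\sum_\ell B^{-\ell}\sum_{\ell'<\ell}(\cdot)\leq (2/B)\sum_{\ell'}B^{-\ell'}(\cdot)$. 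Taking $A$ large first, then $B$ large (depending on $A$, $k$ and the $C^{2k}$ data of $\psi$), and $\epsilon$ small, the total coefficient becomes at most $3$, giving $\norm{\psi f}^{A,B}_{k,s}\leq 3\norm{f}^{A,B}_{k,s}$. The main obstacle is the derivative bookkeeping for $\tilde\phi$, where one must justify that iterated Lie derivatives along extensions of stable vector fields pick up only tame constants on $\supp\psi$; everything else is the same $A$-then-$B$ parameter-tuning idiom already used in this section.
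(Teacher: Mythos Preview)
Your proposal is correct and follows essentially the same argument as the paper: Leibniz-expand $L_{\extension{v_1}}\cdots L_{\extension{v_\ell}}(\psi f)$, absorb $\psi$ into the test function for the main term using submultiplicativity of $\norm{\cdot}_{C^{k+\ell}_A}$ and largeness of $A$, bound the remaining terms by lower-order seminorms $e^A_{k,\ell',s}(f)$ using compactness of $\supp\psi$ mod $\Gamma$, and finally choose $B$ large to absorb those lower-order contributions. The paper carries this out with the slightly cruder constant $2$ in place of your $1+\epsilon$, but the mechanism is identical.
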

\begin{proof}
Let us prove that, for every $k$,
$\ell \leq k$ and every large enough
$A$, there exists a constant $C_{k,\ell,A}$ such that, for any
$f\in \boD$,
  \begin{equation}
  \label{lkjsmlkjsfd}
  e_{k,\ell,s}^A(\psi f) \leq 2 e_{k,\ell,s}^A(f)+ C_{k,\ell,A} \sum_{\ell'<\ell}e_{k,\ell',s}^A(f).
  \end{equation}
The statement of the lemma follows directly from this estimate
if $B$ is much larger than any of the $C_{k,\ell,A}$.

To estimate $e_{k,\ell,s}^A(\psi f)$, we have to compute
integrals of the form
  \begin{equation*}
  \int_{W^u_{1/200}(x)} \phi \cdot L_{\extension{v_1}}\cdots L_{\extension{v_\ell}}(\psi f)\dd \mu_u,
  \end{equation*}
where $\norm{\phi}_{C^{k+\ell}_A} \leq 1$ and
$v_1,\dots,v_\ell$ have a $C_A^{k+\ell+1}$--norm along
$W^u_{1/100}(x)$ bounded by $1$. We can use each
$L_{\extension{v_i}}$ to differentiate either $\psi$ or $f$. If
we differentiate $\psi$ $m$ times for some $m>0$, we obtain an
integral of $\ell-m$ derivatives of $f$ against a
$C^{k+\ell-m}$ function, hence this is bounded by $C
e_{k,\ell',s}^A(f)$ for $\ell'=\ell-m$ (note that we are
working in the lift of a compact subset of $\Teich_1/\Gamma$,
hence the $C^{k+\ell}$ norm of the extended vector fields
$\extension{v_j}$ is bounded). The remaining term is $\int \phi
\psi \cdot L_{\extension{v_1}}\cdots L_{\extension{v_\ell}}f$.
If $A$ is large enough, $\norm{ \phi \psi}_{C_A^{k+\ell}} \leq
\norm{\phi}_{C_A^{k+\ell}} \norm{\psi}_{C_A^{k+\ell}} \leq 2$,
hence this integral is bounded by $2 e_{k,\ell,s}^A(f)$. We
have proved \eqref{lkjsmlkjsfd}.
\end{proof}

\begin{lem}
\label{lem_close_infinity}
For every $k,n\in \N$, for every $\gamma\in \{1,2\}^n$, for
every large enough $A,B$, we have for every $f\in \boD$
  \begin{equation*}
  \norm{\tilde\boL_{\gamma_1} \cdots \tilde\boL_{\gamma_n} f}_{k,s}^{A,B}
  \leq (10C_0)^n e^{-(1-2\delta) T_0 \Card\{i \st \gamma_i=2\}}
  \norm{f}_{k,s}^{A,B}.
  \end{equation*}
\end{lem}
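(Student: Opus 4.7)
The plan is to proceed by induction on $n$ using the factorization $\tilde\boL_{\gamma_1}\cdots \tilde\boL_{\gamma_n} f=\tilde\boL_{\gamma_1}(\tilde\boL_{\gamma_2}\cdots\tilde\boL_{\gamma_n} f)$, and to reduce the whole statement to two single-step bounds on $\boD$, valid for $A,B$ large enough depending only on $k$:
\begin{equation*}
\norm{\tilde\boL_1 g}_{k,s}^{A,B}\leq 10 C_0\, \norm{g}_{k,s}^{A,B},
\qquad
\norm{\tilde\boL_2 g}_{k,s}^{A,B}\leq 10 C_0\, e^{-(1-2\delta)T_0}\,\norm{g}_{k,s}^{A,B}.
\end{equation*}
Iterating these bounds will give exactly the claimed factor $(10C_0)^n e^{-(1-2\delta)T_0\,\Card\{i:\gamma_i=2\}}$; one only has to observe that the $\tilde\boL_j$ preserve $\boD$, so the hypotheses of the single-step estimates are available at each stage.

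For $\tilde\boL_1 g=\boL_{T_0}(\psi_1 g)$ the bound will follow from two already established ingredients. Since $\psi_1=\partition_V$ is $C^\infty$, takes values in $[0,1]$, is compactly supported mod $\Gamma$ and has uniformly bounded $C^{2k}$--norm by Lemma \ref{lem_phiV}, Lemma \ref{largeB_hence_OK} gives $\norm{\psi_1 g}_{k,s}^{A,B}\leq 3\norm{g}_{k,s}^{A,B}$. Using then $V_\delta\geq 1$ in Lemma \ref{lemme_basic_estimate} and taking the supremum over $x$ yields $\norm{\boL_{T_0} h}_{k,s}^{A,B}\leq 2 C_0 \norm{h}_{k,s}^{A,B}$ for any $h\in\boD$. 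Composing gives the desired factor $6C_0\leq 10 C_0$. For $\tilde\boL_2 g=\boL_{T_0}(\psi_2 g)$, writing $\psi_2 g=g-\psi_1 g$ and using the triangle inequality with the previous step yields $\norm{\psi_2 g}_{k,s}^{A,B}\leq 4\norm{g}_{k,s}^{A,B}$ (this detour is needed because $\psi_2$ itself is not compactly supported mod $\Gamma$, so Lemma \ref{largeB_hence_OK} does not apply directly to it). The whole gain must then come from the fact that $\psi_2 g$ is supported in the open set $\{V_\delta\geq V\}$.

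The key geometric step, and the main obstacle, is therefore to prove that for every $h\in\boD$ supported in $\{V_\delta\geq V\}$, one has $\norm{\boL_{T_0} h}_{k,s}^{A,B}\leq 2C_0 e^{-(1-2\delta)T_0}\norm{h}_{k,s}^{A,B}$. Fix $x\in X$: the integral defining $e_{k,\ell,s}^A(\boL_{T_0} h;x)$ is taken over $W^u_{1/200}(x)$ and its integrand at $y$ involves $h\circ g_{T_0}$ and its derivatives there, which all vanish pointwise whenever $V_\delta(g_{T_0}y)<V$. Using that $\log V_\delta$ is $(1+\delta)$--Lipschitz for the Finsler metric (Proposition \ref{prop_Vdelta_OK_sur_Wu}) and that $d(x,y)\leq 1/200$, one has $V_\delta(g_{T_0}y)\leq V_\delta(x)\, e^{(1+\delta)(T_0+1/200)}$, so the whole integrand vanishes identically on $W^u_{1/200}(x)$ as soon as $V_\delta(x)<V\, e^{-(1+\delta)(T_0+1/200)}$. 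With the calibration $V=2e^{(3-2\delta)T_0}$, this forces, whenever $e_{k,\ell,s}^A(\boL_{T_0} h;x)\neq 0$, the inequality $1/V_\delta(x)\leq C e^{-(2-3\delta)T_0}\leq e^{-(1-2\delta)T_0}$ for $T_0$ sufficiently large (using $(2-3\delta)-(1-2\delta)=1-\delta>0$, which is compatible with the prior requirement $40C_0\leq e^{\delta T_0}$ up to enlarging $T_0$). Plugging this into Lemma \ref{lemme_basic_estimate} gives $e_{k,\ell,s}^A(\boL_{T_0} h;x)\leq 2C_0 e^{-(1-2\delta)T_0} e_{k,\ell,s}^A(h)$ in all cases, and summing over $\ell$ with the weights $B^{-\ell}$ concludes. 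The calibration of $V$ is precisely what makes the Lipschitz loss of $\log V_\delta$ over time $T_0$ end up below the $e^{-(1-2\delta)T_0}$ factor from Lemma \ref{lemme_basic_estimate}; this matching is the heart of the argument.
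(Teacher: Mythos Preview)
Your proof is correct and follows essentially the same approach as the paper: reduce to the two single-step estimates, handle $\tilde\boL_1$ by combining Lemma~\ref{largeB_hence_OK} with the trivial bound from Lemma~\ref{lemme_basic_estimate}, and handle $\tilde\boL_2$ by showing that $e^A_{k,\ell,s}(\boL_{T_0}(\psi_2 g);x)$ vanishes unless $V_\delta(x)$ is large, then feeding this into the $1/V_\delta(x)$ term of Lemma~\ref{lemme_basic_estimate}. The paper uses the coarser $2$-Lipschitz bound on $\log V_\delta$ rather than $(1+\delta)$, which makes the calibration $2e^{2T_0}/V = e^{-(1-2\delta)T_0}$ come out exactly, but your version works just as well. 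One small wording issue: you speak of ``enlarging $T_0$'', but $T_0$ is fixed before this lemma by $40C_0\le e^{\delta T_0}$; in fact your threshold inequality $\tfrac12 e^{(1+\delta)/200}e^{-(2-3\delta)T_0}\le e^{-(1-2\delta)T_0}$ holds for every $T_0>0$, so no enlargement is needed.
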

\begin{proof}
It is sufficient to prove that
  \begin{equation*}
  \norm{\tilde\boL_1 f}_{k,s}^{A,B}\leq 10C_0 \norm{f}_{k,s}^{A,B} \quad \text{and}\quad
  \norm{\tilde \boL_2 f}_{k,s}^{A,B}\leq 10C_0 e^{-(1-2\delta)T_0} \norm{f}_{k,s}^{A,B}.
  \end{equation*}

Since $V_\delta$ is bounded from below by $1$, Lemma
\ref{lemme_basic_estimate} shows that $\norm{\boL_{T_0}
f}_{k,s}^{A,B} \leq 2C_0 \norm{f}_{k,s}^{A,B}$ if $A$ is large
enough. Therefore,
  \begin{equation*}
  \norm{\tilde \boL_1 f}_{k,s}^{A,B}
  =\norm{\boL_{T_0} (\partition_V f)}_{k,s}^{A,B}
  \leq 2C_0 \norm{\partition_V f}_{k,s}^{A,B}
  \leq 6C_0 \norm{f}_{k,s}^{A,B},
  \end{equation*}
by Lemma \ref{largeB_hence_OK}, if $A$ and $B$ are large
enough.

We turn to $\tilde \boL_2 f= \boL_{T_0}( (1-\partition_V) f)$.
Let $x\in X$. Since $\log V_\delta$ is $2$-Lipschitz,
$V_\delta(g_{T_0} y) \leq e^{2T_0} V_\delta(y)$ for all $y$. If
$V_\delta(x) \leq e^{-2T_0}V/2$, it follows that $V_\delta(y)
\leq  e^{-2T_0} V$ on $W^u_{1/100}(x)$, and therefore that
$V_\delta(g_{T_0} y) \leq V$ on $g_{T_0} ( W^u_{1/100}(x))$.
Hence, $1-\partition_V=0$ on this set. The definition of
$e_{k,\ell,s}^A$ gives $e_{k,\ell,s}^{A}(
\boL_{T_0}((1-\partition_V) f) ; x) = 0$. We therefore obtain
by Lemma \ref{lemme_basic_estimate}
  \begin{align*}
  e_{k,\ell,s}^A( \boL_{T_0}((1-\partition_V) f) &= \sup_{ V_\delta(x) \geq e^{-2T_0}V/2}
  e_{k,\ell,s}^A( \boL_{T_0}((1-\partition_V) f ; x)
  \\&
  \leq \sup_{ V_\delta(x) \geq e^{-2T_0}V/2}
  C_0 e_{k,\ell,s}^A((1-\partition_V) f) \left( e^{-(1-2\delta)T_0} + 1/V_\delta(x)\right)
  \\&
  \leq C_0 e_{k,\ell,s}^A((1-\partition_V) f) \left( e^{-(1-2\delta)T_0} + 2 e^{2T_0}/V\right).
  \end{align*}
Taking into account the definition of
$\norm{\cdot}_{k,s}^{A,B}$ and the equality $2 e^{2T_0}/V =
e^{-(1-2\delta) T_0}$, we obtain
  \begin{equation*}
  \norm{\boL_{T_0} ((1-\partition_V)f)}_{k,s}^{A,B}
  \leq 2C_0 \norm{(1-\partition_V)f}_{k,s}^{A,B} e^{-(1-2\delta)T_0}.
  \end{equation*}
By Lemma \ref{largeB_hence_OK},
$\norm{(1-\partition_V)f}_{k,s}^{A,B} \leq \norm{f}_{k,s}^{A,B}
+\norm{\partition_V f}_{k,s}^{A,B} \leq 4 \norm{f}_{k,s}^{A,B}$
if $A,B$ are large enough. We obtain $\norm{\tilde \boL_2
f}_{k,s}^{A,B} \leq 8C_0 e^{-(1-2\delta)T_0}
\norm{f}_{k,s}^{A,B}$ as desired.
\end{proof}

We defined an auxiliary norm $\normdeux{\cdot}_k$ in
\eqref{defnormdeux}.

\begin{lem}
\label{maincontraction}
Consider $\gamma= (\gamma_1,\dots,\gamma_n)$ with $\Card\{i \st
\gamma_i = 1\} \geq T/T_0$. Then, for all $k$, if $A$ and $B$
are large enough,
  \begin{equation}
  \label{mlxkjvwmxvi}
  \norm{\tilde\boL_{\gamma_1} \cdots \tilde\boL_{\gamma_n} f}_{k,s}^{A,B}
  \leq 2^{-k/2}\cdot 12C_0 \norm{f}_k^{A,B}
  + C_{n,k,A,B} \normdeux{\psi_{\gamma} f}_k,
  \end{equation}
where the function $\psi_{\gamma}$ is $C^\infty$ and supported
in a compact set mod $\Gamma$.
\end{lem}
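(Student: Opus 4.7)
\textbf{Proof plan for Lemma \ref{maincontraction}.} The key identity is
\[
\tilde\boL_{\gamma_1}\cdots\tilde\boL_{\gamma_n} f(x) = \Psi_\gamma(x)\cdot f(g_{nT_0}x),\qquad \Psi_\gamma(x)=\prod_{i=1}^n \psi_{\gamma_i}(g_{iT_0}x),
\]
so the analysis reduces to controlling a compactly-supported multiplier composed with $\boL_{nT_0}$. The first step is to extract hyperbolicity on $\supp\Psi_\gamma$: whenever $\gamma_i=1$, the factor $\psi_1=\partition_V$ forces $V_\delta(g_{iT_0}y)\leq 2V$, and since $\log V_\delta$ is $2$-Lipschitz along the flow we get $V_\delta(g_s y)\leq 2Ve^{2T_0}\leq 4Ve^{2T_0}$ for $s\in[iT_0,(i+1)T_0]$, placing these sub-intervals inside $K$. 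By hypothesis, the total Lebesgue mass of such $s$ is at least $PT_0\geq T=T(K)$; Proposition~\ref{pliss_hyperbolicity} applied between the first and last such indices, together with the weak non-expansion of Lemma~\ref{weakly_hyperbolic} on the outer segments, yields $\norm{Dg_{nT_0}(y)v}_{g_{nT_0}y}\leq \norm{v}_y/2$ for all $v\in E^s(y)$ whenever $y\in\supp\Psi_\gamma$. Moreover $\supp\Psi_\gamma\subset\{V_\delta\leq 2Ve^{2T_0}\}$ is compact mod $\Gamma$, and this supplies the cutoff $\psi_\gamma$ in the statement.

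Fix $x$, $\ell\leq k$, a test $\phi$ with $\norm{\phi}_{C_A^{k+\ell}}\leq 1$, and stable vector fields $v_j$ with $\norm{v_j}_{C_A^{k+\ell+1}}\leq 1$; we must control
\[
I=\int_{W^u_{1/200}(x)} \phi\cdot L_{\extension{v_1}}\cdots L_{\extension{v_\ell}}\bigl(\Psi_\gamma\cdot(f\circ g_{nT_0})\bigr)\dd\mu_u.
\]
Apply the Leibniz rule: terms in which at least one Lie derivative lands on $\Psi_\gamma$ produce integrals of $\leq \ell-1$ Lie derivatives of $f\circ g_{nT_0}$ multiplied by a $C^{k+\ell}$-bounded function (supported in a compact set mod $\Gamma$). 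Pushing forward these integrals by $g_{nT_0}$ as in Lemma~\ref{lemme_basic_estimate} and invoking Lemma~\ref{lem_equiv_norms} to accommodate vector fields in arbitrary directions (legitimate since we are on a compact set mod $\Gamma$) bounds their contribution by $C_{n,k,A,B}\normdeux{\psi_\gamma f}_k\cdot V_\delta(x)\mu_u(W^u_{1/200}(x))$. This produces the error term $C_{n,k,A,B}\normdeux{\psi_\gamma f}_k$ in \eqref{mlxkjvwmxvi}.

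For the main term (all derivatives on $f\circ g_{nT_0}$), one writes $L_{\extension{v_j}}(f\circ g_{nT_0})=(L_{\extension{w_j}}f)\circ g_{nT_0}$ with $w_j=(g_{nT_0})_*v_j$, changes variables $z=g_{nT_0}y$ (Jacobian $e^{-dnT_0}$), applies the partition of unity from Proposition~\ref{prop_partition} to the image domain, and uses the definition of $e^A_{k,\ell,s}(f;z_i)$ on each piece. The recurrence estimate of Proposition~\ref{prop_Vdelta_OK_sur_Wu} turns the sum of local $V_\delta$-weights into $V_\delta(x)\mu_u(W^u_{1/200}(x))$. The essential gain comes from the pushforward estimate on the $w_j$: the argument of Lemma~\ref{lemme_basic_estimate} gives $c_m(w_j)\leq c_m(v_j)$ at every order, while the Pliss contraction from Step~1 upgrades $c_0(w_j)\leq 1/2$ on $\supp\Psi_\gamma$; for $A$ large one concludes $\norm{w_j}_{C_A^{k+\ell+1}}\leq 1/2+O(A^{-1})$ there. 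Consequently
\[
e^A_{k,\ell,s}(\tilde\boL_{\gamma_1}\cdots\tilde\boL_{\gamma_n}f)\leq C_0(1/2+O(A^{-1}))^\ell\, e^A_{k,\ell,s}(f)+C_{n,k,\ell,A,B}\normdeux{\psi_\gamma f}_k.
\]

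The main obstacle is manufacturing the uniform factor $2^{-k/2}$ in front of $\norm{f}_k^{A,B}$: the $(1/2+O(A^{-1}))^\ell$ gain above is already better than $2^{-k/2}$ for $\ell\geq k/2$ (taking $A$ large enough that $(1+O(A^{-1}))^{k/2}\leq 2$, which only costs a $k$-dependent lower bound on $A$), but it is useless at the bottom levels $\ell<k/2$. For those low-$\ell$ levels one must exploit the additional smoothness of $\phi$ in the $W^u$-direction that becomes available after pulling back by $g_{-nT_0}$: since $g_{-nT_0}$ contracts $W^u$ by at least $1/2$ on $\supp\Psi_\gamma\circ g_{-nT_0}$, the pullback $\phi\circ g_{-nT_0}$ has its $m$-th $W^u$-derivatives damped by $2^{-m}$. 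Combining this with a mollification of $\phi$ at a well-chosen scale and integrating by parts in the stable direction to trade missing $f$-derivatives against the newly-available $\phi$-derivatives (with the remainder absorbed into $\normdeux{\psi_\gamma f}_k$), one recovers a contraction of order $2^{-k/2}$ at each level. Summing the level-wise bounds with weights $B^{-\ell}$ (for $B$ large) then produces the announced inequality.
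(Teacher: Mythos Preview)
Your overall architecture matches the paper's: write the operator as $\boL_{nT_0}$ applied to a compactly supported multiple of $f$, push forward along $g_{nT_0}$, exploit the Pliss hyperbolicity on the relevant support, and for small $\ell$ mollify the test function $\phi$ so that the smooth part lands in $\normdeux{\cdot}_k$. However, there is a genuine gap in your main contraction estimate, and the low-$\ell$ description is not right.

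\textbf{The gap in the $w_j$ bound.} You assert that from $c_0(w_j)\leq 1/2$ and $c_m(w_j)\leq c_m(v_j)$ one gets $\norm{w_j}_{C_A^{k+\ell+1}}\leq 1/2+O(A^{-1})$. This is false: the constraint on the test fields is only $\sum_{m}c_m(v_j)/(m!A^m)\leq 1$, so $c_m(v_j)$ may be as large as $m!A^m$, and the tail $\sum_{m\geq 1}c_m(v_j)/(m!A^m)$ can be any number in $[0,1]$ independently of $A$. With your inputs you only get $\norm{w_j}\leq 1/2+(1-c_0(v_j))$, which can be as large as $3/2$. The paper closes this by using the \emph{unstable} hyperbolicity as well: on the relevant pieces $Dg_{-nT_0}$ contracts $E^u$ by $\leq 1/2$, so $c_m(w_j)\leq 2^{-m-1}c_m(v_{j,1})$ for every $m$ (the extra $2^{-1}$ from the stable contraction of the value, the $2^{-m}$ from the $m$ unstable arguments). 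This gives $\norm{w_j}_{C_A^{k+\ell+1}}\leq 2^{-1}\norm{v_{j,1}}_{C_A^{k+\ell+1}}\leq 2^{-1/2}$ without any largeness assumption on $A$, hence the clean factor $2^{-\ell/2}$ in $e^A_{k,\ell,s}$.

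\textbf{The low-$\ell$ step.} You say one should ``integrate by parts in the stable direction''; that is not possible, since the integral is over an unstable leaf. What the paper actually does for $\ell<k$ is purely a mollification of $\phi$ on $W^u$: write $\phi=\tilde\phi+\phi'$ with $\tilde\phi\in C^{k+\ell+1}$ (so that the corresponding integral, having only $\ell<k$ derivatives of $f$ against a $C^{k+\ell+1}$ test, is controlled by $\normdeux{\psi f}_k$), and $\phi'$ with $c_m(\phi')\leq\epsilon$ for $m<k+\ell$ and $c_{k+\ell}(\phi')\leq 2c_{k+\ell}(\phi)$. Then the $2^{-m}$ damping of $\phi'\circ g_{-nT_0}$ along $W^u$ yields $\norm{\phi'\circ g_{-nT_0}}_{C_A^{k+\ell}}\lesssim 2^{-(k+\ell)}$ for the right choice of $\epsilon$, which combined with the $2^{-\ell/2}$ from the $w_j$ gives a factor better than $2^{-k/2}$. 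No stable integration by parts enters.

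A minor point: your claim $\supp\Psi_\gamma\subset\{V_\delta\leq 2Ve^{2T_0}\}$ is off; using the Lipschitz bound on $\log V_\delta$ only gives $\{V_\delta\leq 2Ve^{2nT_0}\}$. Also, the paper avoids your preliminary Leibniz step entirely by factoring $\tilde\boL_{\gamma_1}\cdots\tilde\boL_{\gamma_n}f=\boL_{nT_0}(\psi f)$ with $\psi=\prod_j\psi_{\gamma_j}\circ g_{-(n-j)T_0}$ and pushing forward first; then $\psi$ sits with $f$ and the multiplication estimate (Lemma~\ref{largeB_hence_OK}) cleanly converts $\norm{\psi f}_{k,s}^{A,B}$ to $3\norm{f}_{k,s}^{A,B}$ at the end.
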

The point of this lemma is that, if $\gamma$ is fixed, we can
choose $k$ very large to make the first term in
\eqref{mlxkjvwmxvi} arbitrarily small, while the second term
gives a compact contribution (thanks to Proposition
\ref{prop_compactness}), and will therefore not be an issue to
control the essential spectral radius.
\begin{proof}
We can write $\tilde\boL_{\gamma_1}\dots \tilde \boL_{\gamma_n} f =
\boL_{nT_0} (\psi f)$, where $\psi=\psi_\gamma=\prod_{j=1}^n \psi_{\gamma_j}
\circ g_{-(n-j)T_0}$ is $C^\infty$ and compactly supported.

To estimate $e_{k,\ell,s}^A (\tilde\boL_{\gamma_1}\dots \tilde
\boL_{\gamma_n} f)$ for some $0\leq \ell \leq k$, we should
estimate integrals of the form
  \begin{equation}
  \label{qsmiufqsdfui2}
  \int_{W^u_{1/200}(x)} \phi \cdot L_{\extension{v_1}}\cdots L_{\extension{v_\ell}}(\boL_{nT_0} (\psi f))\dd\mu_u,
  \end{equation}
where $\norm{\phi}_{C^{k+\ell}_A}\leq 1$, the vector fields
$v_j$ all point in the stable direction and
$\norm{v_j}_{C^{k+\ell+1}_A}\leq 1$.

As in the proof of Lemma \ref{lemme_basic_estimate}, we first replace
$v_j$ by a compactly supported vector field $v_{j,1}$ on
$W^u_{1/100}(x)$, with $\norm{v_{j,1}}_{C^{k+\ell+1}_A}\leq 2^{1/2}$
(assuming $A$ is large enough). Let $w_j$ be the push-forward of
$v_{j,1}$ under $g_{nT_0}$, and let $(\partition_i)$ be a partition
of unity on $g_{nT_0}(W^u_{1/200}(x))$ (c.f.~Proposition
\ref{prop_partition}). The integral \eqref{qsmiufqsdfui2} becomes
  \begin{equation*}
  \sum_{i\in I} \int_{W^u_{1/200}(x_i)} \partition_i(z) \phi(g_{-nT_0}z)
  \cdot
  L_{\extension{w_1}}\cdots L_{\extension{w_\ell}}(\psi f) (z) \, e^{-dnT_0}\dd\mu_u(z).
  \end{equation*}
Let $I'\subset I$ be the set of $i$s such that $\psi$ is not
identically zero on $W^u_{1/200}(x_i)$. We claim that, for
$i\in I'$, for all $y\in W^u_{1/200}(x_i)$,
  \begin{equation}
  \label{spend_a_lot_of_time_in_K}
  \Leb \{ s\in [0, nT_0] \st g_{-s}(y) \in K\} \geq T,
  \end{equation}
where $K$ is defined in \eqref{defK}. Indeed, let $z\in
W^u_{1/200}(x_i)$ satisfy $\psi(z)\not=0$. For all $j$ with
$\gamma_j=1$, we have $\psi_1( g_{-(n-j)T_0} z)\not=0$,
therefore $V_\delta(g_{-(n-j)T_0} z)\leq 2V$. Since
$g_{-(n-j)T_0}$ is a contraction along $W^u$, we obtain
$V_\delta( g_{-(n-j)T_0} y) \leq 4V$ for any $y\in
W^u_{1/200}(x_i)$. For any $s\in [0,T_0]$,
$V_\delta(g_{-s}g_{-(n-j)T_0} y) \leq e^{2s}
V_\delta(g_{-(n-j)T_0} y) \leq e^{2T_0} 4V$, i.e.,
$g_{-s}g_{-(n-j)T_0} y \in K$. This implies that
  \begin{equation*}
  \Leb \{ s\in [0, nT_0] \st g_{-s}(y) \in K\}
  \geq T_0 \Card\{j \st \gamma_j=1\},
  \end{equation*}
which is greater than or equal to $T$, by the assumptions of
the lemma. This proves \eqref{spend_a_lot_of_time_in_K}.

Fix now $i\in I'$, we work along $W^u_{1/200}(x_i)$. Since
$g_t$ is uniformly hyperbolic along trajectories that spend a
time at least $T$ in $K$ (by Proposition
\ref{pliss_hyperbolicity}), we have $c_m(\phi \circ g_{-n T_0})
\leq 2^{-m} c_m(\phi)$, and $c_m(w_j) \leq 2^{-m-1}
c_m(v_{j,1})$ (note that we have a gain even for $m=0$ since
the vector itself is contracted by the differential of $g_{n T_0}$).
This gives $\norm{\phi \circ g_{-n T_0}}_{C^{k+\ell}_A} \leq
\norm{\phi}_{C^{k+\ell}_A}$ (there is no gain here at level
$m=0$, so no gain overall) and $\norm{w_j}_{C^{k+\ell+1}_A}
\leq 2^{-1} \norm{v_{j,1}}_{C^{k+\ell+1}_A}\leq 2^{-1/2}$. This
gives a gain of $2^{-1/2}$ with respect to the non-contracting
situation of Lemma \ref{lemme_basic_estimate}, and we end up
after the same computations with
  \begin{equation}
  \label{Irbasic}
  e_{k,\ell,s}^A(\boL_{nT_0}(\psi f)) \leq 2C_0 \cdot 2^{-\ell/2} e_{k,\ell,s}^A(\psi f).
  \end{equation}
This gives a certain gain if $\ell$ is large. In particular,
for $\ell=k$, we obtain a gain of $2^{-k/2}$, as in the
estimate \eqref{mlxkjvwmxvi} we are trying to prove. However,
this is not sufficient for smaller $\ell$. Assume now $\ell<k$,
we will regularize the function $\phi$ by convolution in this
case.

For $\epsilon>0$, we consider a function $\tilde \phi$ on
$W^u_{1/100}(x_i)$ such that $c_m(\phi-\tilde \phi)\leq \epsilon$ for
$m<k+\ell$, $c_{k+\ell}(\tilde \phi) \leq 2 c_{k+\ell}(\phi)$ and
$c_{k+\ell+1}(\tilde \phi) \leq C_{k,A}/\epsilon$. Note that, since
$\tilde\phi$ is obtained by convolution between $\phi$ and a kernel
of support of size $\epsilon$, the support of $\tilde\phi$ is larger
than that of $\phi$. Since all the functions we are considering are
multiplied by the partition of unity $\partition_i$, this is not a
problem.

Along $W^u_{1/200}(x_i)$, the function $\phi'=(\phi-\tilde
\phi)\circ g_{-nT_0}$ satisfies $c_m(\phi')\leq \epsilon$ for
$m<k+\ell$ and $c_{k+\ell}(\phi') \leq 2\cdot 2^{-(k+\ell)}
c_{k+\ell}(\phi)$. Choosing $\epsilon=2^{-(4+k+\ell)}$, we have
finally $\norm{\phi'}_{C^{k+\ell}_A} \leq
e^{1/A} 2^{-4-k-\ell}+2^{1-k-\ell}\norm{\phi}_{C^{k+\ell}_A} \leq 2^{(3/2)-k-\ell}$ for
any $A \geq 1$. Let us decompose in \eqref{qsmiufqsdfui2} the function
$\phi$ as $\phi'+\tilde \phi$. The resulting term coming from
$\phi'$ is similar to \eqref{Irbasic} but with an additional
factor $2^{(3/2)-k-\ell}$, while the term coming from $\tilde\phi$
is bounded in terms of $\normdeux{f}_k$, since there are at
most $\ell<k$ derivatives of $f$ integrated against a function
in $C^{k+\ell+1}$. In the end, we get
  \begin{equation*}
  e_{k,\ell,s}^A(\boL_{nT_0}(\psi f)) \leq
  2C_0 \cdot 2^{-\ell/2} \cdot 2^{(3/2)-k-\ell} e_{k,\ell,s}^A(\psi f)
  +C_{n,\gamma,A,k} \normdeux{\psi f}_k.
  \end{equation*}
Summing the last equation for $\ell=0,\dots,k-1$ and
\eqref{Irbasic} for $\ell=k$, we obtain
  \begin{equation*}
  \norm{\boL_{nT_0}(\psi f)}_{k,s}^{A,B}
  \leq 4C_0 2^{-k/2} \norm{\psi f}_{k,s}^{A,B}
  + C_{n,\gamma,A,k} \normdeux{\psi f}_k.
  \end{equation*}
Since the function $\psi$ is $C^\infty$ and compactly
supported, Lemma \ref{largeB_hence_OK} applies if $B$ is large
enough. This concludes the proof.
\end{proof}

To simplify notations, we write $O^{comp}(f)$ for terms bounded
by $\normdeux{\psi f}_k$, for some $C^\infty$ function $\psi$
in $\Teich_1$ that is supported in a compact set mod $\Gamma$.
This notation is invariant under $\boL_t$ for fixed $t$ (since
this operator acts continuously for $\normdeux{\cdot}_k$), and
under addition (if $\psi_1$ and $\psi_2$ are two $C^\infty$
functions whose support is compact mod $\Gamma$, consider a
function $\psi$ with the same properties which is equal to $1$
on $\supp(\psi_1) \cup \supp(\psi_2)$, then $\normdeux{\psi_1
f}_k=\normdeux{\psi_1 \psi f}_k \leq C(\psi_1) \normdeux{\psi
f}_k$, and a similar inequality holds for $\psi_2$).

\begin{cor}
For every $n\in \N$ with $n\geq T/(\delta T_0)$, if $k,A,B$ are
large enough, we have
  \begin{equation*}
  \norm{ \boL_{nT_0} f}_{k,s}^{A,B} \leq e^{-(1-4\delta)nT_0} \norm{f}_k^{A,B}
  + O^{comp}(f).
  \end{equation*}
\end{cor}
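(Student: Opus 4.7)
The plan is to decompose $\boL_{nT_0}$ using the splitting $\boL_{T_0} = \tilde\boL_1 + \tilde\boL_2$ already introduced, writing
\begin{equation*}
\boL_{nT_0} = \sum_{\gamma \in \{1,2\}^n} \tilde\boL_{\gamma_1} \cdots \tilde\boL_{\gamma_n} = \sum_{\gamma \in \Gamma_1} \tilde\boL_{\gamma_1} \cdots \tilde\boL_{\gamma_n} + \sum_{\gamma \in \Gamma_2} \tilde\boL_{\gamma_1} \cdots \tilde\boL_{\gamma_n},
\end{equation*}
where $\Gamma_1 = \{\gamma : \Card\{i : \gamma_i = 1\} < T/T_0\}$ (few visits to the compact region, so most time is spent near infinity) and $\Gamma_2 = \{\gamma : \Card\{i : \gamma_i = 1\} \geq T/T_0\}$ (a definite amount of time spent in $K$). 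The two sums will be controlled by Lemma~\ref{lem_close_infinity} and Lemma~\ref{maincontraction} respectively.

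For $\gamma \in \Gamma_1$, set $m_\gamma = \Card\{i : \gamma_i = 2\} = n - \Card\{i : \gamma_i = 1\}$. The hypothesis $n \geq T/(\delta T_0)$ gives $T/T_0 \leq \delta n$, hence $m_\gamma > n(1-\delta)$. Using the choice $40 C_0 \leq e^{\delta T_0}$ made just after \eqref{defK} (so that $10 C_0 \leq e^{\delta T_0}/4$), Lemma~\ref{lem_close_infinity} yields
\begin{equation*}
\norm{\tilde\boL_{\gamma_1} \cdots \tilde\boL_{\gamma_n} f}_{k,s}^{A,B} \leq (10C_0)^n e^{-(1-2\delta)T_0 m_\gamma} \norm{f}_{k,s}^{A,B} \leq 4^{-n} e^{T_0 n[\delta - (1-2\delta)(1-\delta)]} \norm{f}_{k,s}^{A,B},
\end{equation*}
and since $(1-2\delta)(1-\delta) = 1 - 3\delta + 2\delta^2 \geq 1 - 4\delta + \delta = 1 - 3\delta$, the exponent is at most $-(1-4\delta) n T_0$. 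Summing over the at most $2^n$ elements of $\Gamma_1$ gives a total contribution bounded by $2^{-n} e^{-(1-4\delta)nT_0} \norm{f}_{k,s}^{A,B} \leq e^{-(1-4\delta)nT_0} \norm{f}_{k}^{A,B}$, as required.

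For $\gamma \in \Gamma_2$, Lemma~\ref{maincontraction} applies and gives
\begin{equation*}
\norm{\tilde\boL_{\gamma_1} \cdots \tilde\boL_{\gamma_n} f}_{k,s}^{A,B} \leq 12 C_0 \cdot 2^{-k/2} \norm{f}_k^{A,B} + C_{n,k,A,B} \normdeux{\psi_\gamma f}_k.
\end{equation*}
Summing over the at most $2^n$ elements of $\Gamma_2$, the first term contributes at most $12 C_0 \cdot 2^n \cdot 2^{-k/2} \norm{f}_k^{A,B}$; since $n$ is already fixed, choosing $k$ large enough makes this quantity smaller than $e^{-(1-4\delta)nT_0}$, so it can be absorbed in the main estimate. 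The finitely many compact-support terms $C_{n,k,A,B} \normdeux{\psi_\gamma f}_k$ add up to $O^{comp}(f)$: taking a single $C^\infty$ function $\psi$ supported in a compact set mod $\Gamma$ and equal to $1$ on $\bigcup_{\gamma \in \Gamma_2} \supp(\psi_\gamma)$, each $\normdeux{\psi_\gamma f}_k$ is bounded by a constant multiple of $\normdeux{\psi f}_k$.

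The content of the corollary is essentially a combinatorial packaging of the two preceding lemmas, so there is no deep obstacle; the only subtlety is the order of quantifiers. One fixes $n \geq T/(\delta T_0)$ first, then picks $k$ large (depending on $n$) to absorb the $2^n \cdot 2^{-k/2}$ factor coming from $\Gamma_2$, and only then takes $A, B$ large enough so that both Lemma~\ref{lem_close_infinity} and Lemma~\ref{maincontraction} apply for that particular $k$. This matches the quantifier order "if $k, A, B$ are large enough" in the statement, and the constants $A, B$ are allowed to depend on $n$ and $k$.
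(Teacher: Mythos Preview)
Your proof is correct and follows essentially the same approach as the paper: decompose $\boL_{nT_0}$ into words in $\tilde\boL_1,\tilde\boL_2$, bound words that stay mostly near infinity via Lemma~\ref{lem_close_infinity}, and bound words with enough visits to the compact part via Lemma~\ref{maincontraction}, then choose $k$ large to absorb the $2^n\cdot 2^{-k/2}$ factor. The only cosmetic differences are that the paper splits at the threshold $\delta n$ rather than $T/T_0$ (equivalent here since $T/T_0\leq\delta n$), and that its bookkeeping gives exactly $e^{-(1-4\delta)nT_0}$ whereas your two contributions sum to $(1+2^{-n})e^{-(1-4\delta)nT_0}$; this is harmless, since you can just as well require $12C_0\cdot 2^n\cdot 2^{-k/2}\leq \tfrac12 e^{-(1-4\delta)nT_0}$.
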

\begin{proof}
We write $\boL_{nT_0} f= \sum_{\gamma\in \{1,2\}^n} \tilde
\boL_{\gamma_1} \cdots \tilde\boL_{\gamma_n} f$, and estimate
the terms coming from each $\gamma$.

If $\Card\{j\st \gamma_j=1\} \geq \delta n$, then the resulting
term is bounded by Lemma \ref{maincontraction}. Otherwise,
$\Card\{j\st \gamma_j=2\} \geq (1-\delta)n$, and Lemma
\ref{lem_close_infinity} gives an upper bound of the form
$(10C_0)^n e^{-(1-2\delta) T_0 (1-\delta) n} \norm{f}_k^{A,B}$.
Since $(1-2\delta)(1-\delta) \geq 1-3\delta$, we obtain after
summing over the $2^n$ possible values of $\gamma$
  \begin{align*}
  \norm{ \boL_{nT_0} f}_{k,s}^{A,B} &
  \leq 2^n (10 C_0)^n e^{-(1-3\delta) T_0 n} \norm{f}_{k}^{A,B}
  + 2^n \cdot 12C_0 2^{-k/2} \norm{f}_k^{A,B}
  \\ &\ \ \ \ %
  + C_{n,k,A,B} \sum \normdeux{\psi_{n,\gamma} f}_k.
  \end{align*}
Choosing $k$ large enough, we can make sure that $12 C_0
2^{-k/2} \leq (10 C_0)^n e^{-(1-3\delta) T_0 n}$, and we obtain
a bound of the form
  \begin{equation*}
  (40 C_0)^n e^{-(1-3\delta)T_0 n} \norm{f}_{k}^{A,B}
  +C_{n,k,A,B} \sum \normdeux{\psi_{n,\gamma} f}_k.
  \end{equation*}
Since $40C_0 \leq e^{\delta T_0}$, this implies the statement
of the corollary.
\end{proof}

\begin{cor}
\label{cor_s}
For any large enough $N$, if $k,A,B$ are large enough, we
have
  \begin{equation*}
  \norm{\boM^N f}_{k,s}^{A,B} \leq 2C_0 (e^{(1-4\delta) T_0}+2) \norm{f}_k^{A,B} +O^{comp}(f).
  \end{equation*}
\end{cor}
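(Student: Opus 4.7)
The plan is to start from the defining expression
\begin{equation*}
\boM^N f = \int_0^\infty \frac{t^{N-1}}{(N-1)!} e^{-4\delta t} \boL_t f \dd t,
\end{equation*}
and to split the integral at the threshold $n_0 T_0$, where $n_0 = \lceil T/(\delta T_0) \rceil$ is the smallest integer for which the preceding corollary applies. The two ranges will respectively produce the ``$+2$'' and the ``$e^{(1-4\delta) T_0}$'' summands in the final estimate.

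In the short-time range $t \in [0, n_0 T_0]$, I would use only the trivial bound $\norm{\boL_t f}_{k,s}^{A,B} \leq 2C_0 \norm{f}_{k,s}^{A,B}$, which is immediate from Lemma~\ref{lemme_basic_estimate} since $V_\delta \geq 1$. The associated weight satisfies $\int_0^{n_0 T_0} \frac{t^{N-1}}{(N-1)!} e^{-4\delta t} \dd t \leq (n_0 T_0)^N/N!$, which tends to $0$ as $N \to \infty$. Taking $N$ large enough that this is at most $2$ bounds the short-time contribution by $4 C_0 \norm{f}_k^{A,B}$.

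For the long-time range, I would partition into intervals $[nT_0, (n+1)T_0]$ with $n \geq n_0$, write $\boL_t = \boL_{t-nT_0} \boL_{nT_0}$, and combine the trivial bound on $\boL_{t-nT_0}$ with the preceding corollary applied to $\boL_{nT_0}$ to obtain
\begin{equation*}
\norm{\boL_t f}_{k,s}^{A,B} \leq 2C_0 e^{-(1-4\delta)nT_0} \norm{f}_k^{A,B} + O^{comp}(f).
\end{equation*}
Since $nT_0 \geq t - T_0$ on this interval, the key pointwise inequality $e^{-4\delta t} e^{-(1-4\delta) n T_0} \leq e^{(1-4\delta) T_0} e^{-t}$ holds, and summing the main-term contributions yields
\begin{equation*}
\sum_{n \geq n_0} \int_{nT_0}^{(n+1)T_0} \frac{t^{N-1}}{(N-1)!} e^{-4\delta t} \cdot 2C_0\, e^{-(1-4\delta)nT_0} \dd t \leq 2C_0\, e^{(1-4\delta) T_0} \int_0^\infty \frac{t^{N-1}}{(N-1)!} e^{-t} \dd t = 2C_0\, e^{(1-4\delta) T_0},
\end{equation*}
so the main term contributes at most $2C_0\, e^{(1-4\delta) T_0} \norm{f}_k^{A,B}$, and adding the short-time bound gives the claimed coefficient $2C_0(e^{(1-4\delta)T_0}+2)$.

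The hard part will be verifying that the sum over $n \geq n_0$ of the $O^{comp}(f)$ terms inherited from the preceding corollary can itself be packaged as a single $O^{comp}(f)$. Both the constants $C_{n,k,A,B}$ and the compactly supported cutoffs $\psi_{n,\gamma}$ coming from Lemma~\ref{maincontraction} depend on $n$, and the supports of $\psi_{n,\gamma}$ expand with $n$ because $V_\delta \circ g_{-s} \leq e^{2s} V_\delta$. However, for fixed $N$ the integrated weights $w_n = \int_{nT_0}^{(n+1)T_0} \frac{t^{N-1}}{(N-1)!} e^{-4\delta t}\dd t$ decay super-exponentially once $n$ exceeds $N/(4\delta T_0)$, while the $C_{n,k,A,B}$ grow only exponentially in $n$, so that $\sum_n w_n C_{n,k,A,B} < \infty$. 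Choosing a single smooth cutoff $\psi$ equal to $1$ on a sufficiently large compact set mod $\Gamma$ and invoking the additivity and $\boL_t$-invariance of the $O^{comp}$ notation lets all these contributions be collected as a single $O^{comp}(f)$, completing the proof.
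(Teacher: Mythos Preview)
Your main-term estimate is fine and matches the paper's. The gap is in the ``hard part'': you cannot package an \emph{infinite} sum of $O^{comp}$ terms as a single $O^{comp}(f)$. By definition, $O^{comp}(f)$ means a bound of the form $C\,\normdeux{\psi f}_k$ with $\psi$ smooth and supported in a compact set mod $\Gamma$; this compactness is what makes Proposition~\ref{prop_compactness} applicable in the final step. As you yourself observe, the cutoffs $\psi_{n,\gamma}$ have supports that expand with $n$, so their union over all $n\ge n_0$ is \emph{not} compact mod $\Gamma$, and no single compactly supported $\psi$ dominates them. The convergence of $\sum_n w_n C_{n,k,A,B}$ (and note that $w_n$ decays only exponentially in $n$ for fixed $N$, not super-exponentially) does not help here: the issue is not the size of the coefficients but the geometry of the supports. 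Invoking ``$\boL_t$-invariance of the $O^{comp}$ notation'' also does not help, since that invariance holds only for fixed $t$, with a constant depending on $t$.

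The paper avoids this by a three-way split rather than your two-way split: small $n$ (trivial bound, weight small for large $N$), intermediate $n$ up to some $n_0(N)$ (preceding corollary, yielding the $e^{(1-4\delta)T_0}$ term plus \emph{finitely many} $O^{comp}$ terms), and very large $n\ge n_0(N)$ (trivial bound again, weight small because the tail of $\int t^{N-1}e^{-4\delta t}/(N-1)!\,\dd t$ is small). Each of the two trivial ranges contributes $2C_0\norm{f}_k^{A,B}$, accounting for the ``$+2$''. The point is that for each fixed $N$ only finitely many $n$ produce $O^{comp}$ terms, and finitely many such terms can be combined by the stated additivity.
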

\begin{proof}
We start from the formula
  \begin{equation*}
  \boM^N f=\int_{t=0}^\infty \frac{t^{N-1}}{(N-1)!} e^{-4\delta t} \boL_t f
\dd t  =\sum_{n=0}^\infty \int_{nT_0}^{(n+1)T_0} \frac{t^{N-1}}{(N-1)!} e^{-4\delta t} \boL_t f
\dd t.
  \end{equation*}

On an interval $[nT_0, (n+1)T_0]$ with small $n$ (i.e., $n<
T/(\delta T_0)$), we use the simple bound $\norm{\boL_t
f}_{k,s}^{A,B} \leq 2C_0 \norm{f}_k^{A,B}$ coming from Lemma
\ref{lemme_basic_estimate}. Since for any fixed $T_*>0$,
$\int_0^{T_*}
\frac{t^{N-1}}{(N-1)!} \dd t$ tends to zero when $N\to\infty$,
the contribution of those intervals is bounded, say, by $2C_0
\norm{f}_k^{A,B}$ if $N$ is large enough.

We use the same trivial bound on the intervals $[nT_0,
(n+1)T_0]$ with very large $n$ ($n\geq n_0(N)$ to be chosen
later). The contribution of these intervals is then bounded by
  \begin{equation*}
  \int_{ n_0(N)T_0}^\infty \frac{t^{N-1}}{(N-1)!} e^{-4\delta t} 2C_0 \norm{f}_k^{A,B} \dd t.
  \end{equation*}
Choosing $n_0(N)$ large enough, we can ensure that this is
bounded by $2C_0 \norm{f}_k^{A,B}$.

Consider now $n$ in between. For $t\in [nT_0, (n+1)T_0]$, we
have
  \begin{align*}
  \norm{ \boL_t f}_{k,s}^{A,B} &
  \leq 2C_0 \norm{\boL_{nT_0} f}_{k,s}^{A,B}
  \leq 2C_0 e^{-(1-4\delta) nT_0} \norm{f}_k^{A,B}
  +O^{comp}(f)
  \\&
  \leq 2C_0 e^{(1-4\delta) T_0} e^{-(1-4\delta)t}
  \norm{f}_k^{A,B}  +O^{comp}(f).
  \end{align*}
Integrating over $t$ and then summing over $n$, we get a
contribution bounded by
  \begin{equation*}
  2C_0 e^{(1-4\delta) T_0} \int_{t=0}^{\infty}
  \frac{t^{N-1}}{(N-1)!} e^{-4\delta t} e^{-(1-4\delta)t} \norm{f}_k^{A,B} \dd t
  +O^{comp}(f),
  \end{equation*}
which is bounded by $ 2C_0 e^{(1-4\delta) T_0} \norm{f}_k^{A,B}
+ O^{comp}(f)$ since $\int_{t=0}^\infty \frac{t^{N-1}}{(N-1)!}
e^{-t} \dd t= 1$.
\end{proof}

\begin{proof}[Proof of Theorem \ref{thm_control_spectral_radius}]
The first part of the statement is contained in Lemma
\ref{lemme_basic_estimate}.  It remains to estimate the essential
spectral radius of $\boM$.  Adding the estimates of Lemma
\ref{lem_omega} and of Corollary \ref{cor_s}, we have for large
enough $N,k,A,B$
  \begin{equation*}
  \norm{\boM^N f}_k^{A,B} \leq 2C_0(e^{(1-4\delta) T_0}+5) \norm{f}_k^{A,B}
  +O^{comp}(f).
  \end{equation*}
Let us fix once and for all $N$ large enough so that
$2C_0(e^{(1-4\delta) T_0}+5) \leq (1+\delta)^N$, and $k,A,B$
such that the previous estimate holds. This estimate translates
into the following: there exists a $C^\infty$ function $\psi$
supported in a compact set mod $\Gamma$ such that, for any
function $f$ in $\boD$,
  \begin{equation*}
  \norm{\boM^N f}_k^{A,B} \leq (1+\delta)^N \norm{f}_k^{A,B}
  +\normdeux{\psi f}_k.
  \end{equation*}
The unit ball of $\boD^\Gamma$ for the norm $\norm{\cdot}_k$ is
relatively compact for the semi-norm $\normdeux{f}\coloneqq
\normdeux{\psi f}_k$, by Proposition ~ \ref{prop_compactness}. By
Hennion's Theorem (Lemma ~ \ref{lemme_hennion}), it follows
that the essential spectral radius of $\boM$ for the norm
$\norm{\cdot}_k^{A,B}$ on the space $\boD^\Gamma$ is at most
$1+\delta$. Since this norm is equivalent to $\norm{\cdot}_k$,
this concludes the proof of Theorem ~
\ref{thm_control_spectral_radius}.
\end{proof}

\appendix
\section{}
\subsection{Spherical functions}
\label{appendix_cfunction}
In this section, we prove the estimate \eqref{harish} on the
behavior of the spherical function $\phi_{\xi_s}$ when $\xi_s$
is a representation of $\SL$ in the complementary series. It is
a consequence of classical estimates on spherical functions,
let us for instance follow the computations in
\cite{helgason2}. For $\Re s\in [-1,1]$, let us define coefficients
$\Gamma_n(s)$ by $\Gamma_0=1$, $\Gamma_n=0$ if $n$ is odd, and
$n(n-s)\Gamma_n(s)=\sum_{0<k\leq n/2}
\Gamma_{n-2k}(s)(2n-4k-s+1)$ if $n$ is even. It is easy to check by induction
that these coefficients grow more slowly than any exponential.
In particular (see, e.g., \cite[Lemma 4.13]{helgason2}), for
every $\epsilon>0$, there exists a constant $C>0$ such that
  \begin{equation}
  \label{Gamma_bound}
  \forall s \in [-1, 1],\ \forall n\in \N,\quad |\Gamma_n(s)|\leq Ce^{\epsilon n}.
  \end{equation}

These coefficients are chosen so that $t\mapsto e^{(s-1)t}\sum
\Gamma_n(s) e^{-2nt}$ satisfies an explicit differential equation of
order $2$ which is also satisfied by $\phi_{\xi_s}$. Another solution
of the same equation is $t\mapsto e^{(-s-1)t}\sum \Gamma_n(-s)
e^{-2nt}$. It follows that $\phi_{\xi_s}$ is a linear combination of
those two functions. One can identify the coefficients in this linear
combination (they are given by the $c$ function \eqref{c(s)}), to
obtain the following formula for $\phi_{\xi_s}$: for every $s\in
(0,1] \cup \ic(0,+\infty)$,
  \begin{equation*}
  \phi_{\xi_s}(g_t)=c(s) e^{(s -1)t}\sum_{n\geq 0}\Gamma_n(s) e^{-2nt}
  +c(-s) e^{(-s -1)t} \sum_{n\geq 0} \Gamma_n(-s) e^{-2nt}.
  \end{equation*}
This is \cite[Theorem IV.5.5]{helgason2} in the case of $\SL$
(the formula for $c$ is given in \cite[Theorem
IV.6.4]{helgason2}).

For $s\in [\delta,1]$, the dominating term in this formula is
$c(s) e^{(s-1)t}$, and the sum of the other terms is bounded by
$C e^{-t}$ if $t\geq 1$, by \eqref{Gamma_bound}. Since
$\phi_{\xi_s}(g_t) - c(s) e^{(s-1)t}$ is uniformly bounded for
$t\in [0,1]$ and $s\in [\delta,1]$, the estimate \eqref{harish}
follows.

\subsection{Boundary behavior of Cauchy transforms}
\begin{lem}
\label{lemchargepas}
Let $\nu$ be a nonnegative measure on $[0,1]$, with finite
mass. Assume that the function $F(z)=\int_{s\in [0,1]}
\frac{\dd\nu(s)}{z-s+1}$, defined for $z\in \C - [-1,0]$,
admits a continuous extension to an interval $[a-1,b-1]\subset
[-1,0]$. Then $\nu[a,b]=0$.
\end{lem}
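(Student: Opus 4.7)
The plan is to reduce to the classical Stieltjes inversion formula for Cauchy transforms. First, I would change variables by setting $w = z+1$, so that $F(z) = G(w)$ with
\[
G(w) = \int_{[0,1]} \frac{\dd\nu(s)}{w-s},
\]
and the hypothesis becomes: $G$, a priori holomorphic on $\C\setminus[0,1]$, extends continuously across the sub-interval $[a,b]\subset[0,1]$. Since $\C\setminus[0,1]$ is connected, ``continuous extension'' here means a genuine two-sided limit: for every $x\in[a,b]$, $\lim_{z\to x,\ z\notin[0,1]}G(z)$ exists, so in particular the jump $G(x+\ic\epsilon)-G(x-\ic\epsilon)$ tends to $0$ as $\epsilon\to 0^+$, uniformly in $x\in[a,b]$ by compactness.

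Next I would compute this jump explicitly:
\[
G(x+\ic\epsilon)-G(x-\ic\epsilon)
= \int_{[0,1]}\left(\frac{1}{x+\ic\epsilon-s}-\frac{1}{x-\ic\epsilon-s}\right)\dd\nu(s)
= -2\ic\int_{[0,1]}\frac{\epsilon}{(x-s)^2+\epsilon^2}\dd\nu(s).
\]
So the Poisson integral $P_\epsilon(x) := \int\frac{\epsilon}{(x-s)^2+\epsilon^2}\dd\nu(s)$ converges to $0$ uniformly on $[a,b]$, and in particular $\int_a^b P_\epsilon(x)\dd x\to 0$.

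I then apply Fubini (the integrand is nonnegative, $\nu$ is finite, so this is legitimate) to rewrite
\[
\int_a^b P_\epsilon(x)\dd x = \int_{[0,1]}\bigl[\arctan\tfrac{b-s}{\epsilon}-\arctan\tfrac{a-s}{\epsilon}\bigr]\dd\nu(s).
\]
The bracketed quantity is bounded by $\pi$ and, as $\epsilon\to 0^+$, converges pointwise to $\pi$ if $s\in(a,b)$, to $\pi/2$ if $s\in\{a,b\}$, and to $0$ if $s\notin[a,b]$. Dominated convergence then gives
\[
0 = \lim_{\epsilon\to 0^+}\int_a^b P_\epsilon(x)\dd x = \pi\,\nu((a,b))+\tfrac{\pi}{2}\bigl(\nu(\{a\})+\nu(\{b\})\bigr).
\]
Since $\nu\geq 0$, each term vanishes individually and we conclude $\nu([a,b])=0$.

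There is no real obstacle here — it is the standard ``boundary-value gives the measure'' argument for Cauchy transforms. The only mildly delicate point is that the continuous extension hypothesis is a two-sided one, which is what lets us control the jump and hence the Poisson integral; pointwise uniformity on the compact interval $[a,b]$ (automatic from continuity) is what makes the Fubini/dominated convergence step go through cleanly.
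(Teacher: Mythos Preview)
Your proof is correct and takes a genuinely different route from the paper's.

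Both arguments begin with the same computation of the jump across the real axis, arriving at the Poisson-type integral $P_\epsilon(x)=\int \frac{\epsilon}{(x-s)^2+\epsilon^2}\dd\nu(s)$. From there the paper proceeds pointwise: it bounds $P_\epsilon(x)$ below by $\frac{1}{2\epsilon}\nu[x-\epsilon,x+\epsilon]$ to deduce $\nu[x-\epsilon,x+\epsilon]=o(\epsilon)$ at every $x\in[a,b]$, and then invokes a Besicovitch-type covering theorem (cited as \cite[Theorem~2.12]{mattila}) to pass from this local density condition to $\nu[a,b]=0$. You instead integrate $P_\epsilon$ over $[a,b]$, swap the order of integration by Fubini, and recover the measure directly via dominated convergence --- this is the classical Stieltjes inversion argument. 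Your approach is more self-contained (no external covering lemma) and cleaner; the paper's approach yields the finer intermediate statement that $\nu$ has zero density at every point of $[a,b]$, which is not needed for the application but is of independent interest. The one point worth making explicit in your write-up is the justification of uniform convergence: the extended $G$ is continuous on the compact set $\{x+\ic y : x\in[a,b],\ |y|\le 1\}$, hence uniformly continuous there, which is exactly what you need.
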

\begin{proof}
Let us first show that, if $F$ is continuous at a point $x-1$,
with $x\in [0,1]$, then
  \begin{equation}
  \label{prolonge}
  \nu[x-\epsilon, x+\epsilon]=o(\epsilon).
  \end{equation}
We have
  \begin{equation*}
  F(x-1+\ic y)=\int \frac{\dd\nu(s)}{x+\ic y-s}
  =\int \frac{x-s-\ic y}{ (x-s)^2+y^2} \dd\nu(s).
  \end{equation*}
As a consequence,
  \begin{equation*}
  \Ima (F(x-1+\ic y)-F(x-1-\ic y))=-2\int \frac{y}{ (x-s)^2+y^2} \dd\nu(s).
  \end{equation*}
If $F$ can be extended continuously to $x-1$, this quantity
tends to $0$. For $s\in[x-y,x+y]$, the integrand is at least
$y/(2y^2)$, therefore
  \begin{multline*}
  \nu[x-y,x+y]/y \leq 2\int_{s=x-y}^{x+y} \frac{y}{ (x-s)^2+y^2} \dd\nu(s)
  \\
  \leq | \Ima( F(x-1+\ic y)-F(x-1-\ic y))|\to 0.
  \end{multline*}
This proves \eqref{prolonge}.

Assume now that $F$ can be continuously extended to a whole
interval $[a-1,b-1]$. For any $x\in [a,b]$, we have
$\nu[x-\epsilon,x+\epsilon]=o(\epsilon)$. By \cite[Theorem
2.12]{mattila}, for any $\rho>0$, we can cover $[a,b]$ with
intervals $I_n$ with bounded overlap, with $\nu(I_n) \leq \rho
|I_n|$. Therefore, $\nu[a,b]\leq \sum \nu(I_n) \leq \rho
\sum|I_n|\leq \rho C(b-a)$. Letting $\rho$ tend to $0$, we
obtain $\nu[a,b]=0$.
\end{proof}

\bibliography{biblio}

\providecommand{\bysame}{\leavevmode\hbox to3em{\hrulefill}\thinspace}
\providecommand{\MR}{\relax\ifhmode\unskip\space\fi MR }
\providecommand{\MRhref}[2]{%
  \href{http://www.ams.org/mathscinet-getitem?mr=#1}{#2}
}
\providecommand{\href}[2]{#2}
\begin{thebibliography}{ABEM06}

\bibitem[ABEM06]{athreya_et_al}
Jayadev Athreya, Alexander Bufetov, Alex Eskin, and Maryam Mirzakhani,
  \emph{Lattice point asymptotics and volume growth on {T}eichm{\"u}ller
  space}, Preprint, 2006.

\bibitem[AGY06]{AGY_teich}
Artur Avila, S{\'e}bastien Gou{\"e}zel, and Jean-Christophe Yoccoz,
  \emph{Exponential mixing for the {T}eichm\"uller flow}, Publ. Math. Inst.
  Hautes \'Etudes Sci. (2006), 143--211. \MR{MR2264836}

\bibitem[AR09]{avila_resende}
Artur Avila and Maria~Jo{\~a}o Resende, \emph{Exponential mixing for the
  {T}eichmuller flow in the space of quadratic differentials}, Preprint, 2009.

\bibitem[Ath06]{athreya}
Jayadev~S. Athreya, \emph{Quantitative recurrence and large deviations for
  {T}eichmuller geodesic flow}, Geom. Dedicata \textbf{119} (2006), 121--140.
  \MR{MR2247652}

\bibitem[BER99]{CR_book}
M.~Salah Baouendi, Peter Ebenfelt, and Linda~Preiss Rothschild, \emph{Real
  submanifolds in complex space and their mappings}, Princeton Mathematical
  Series, vol.~47, Princeton University Press, Princeton, NJ, 1999.
  \MR{MR1668103}

\bibitem[BGK07]{BGK_coupling}
Jean-Baptiste Bardet, S{\'e}bastien Gou{\"e}zel, and Gerhard Keller,
  \emph{Limit theorems for coupled interval maps}, Stoch. Dyn. \textbf{7}
  (2007), 17--36. \MR{MR2293068}

\bibitem[BL98]{babillot_ledrappier}
Martine Babillot and Fran{\c{c}}ois Ledrappier, \emph{Geodesic paths and
  horocycle flow on abelian covers}, Lie groups and ergodic theory (Mumbai,
  1996), Tata Inst. Fund. Res. Stud. Math., vol.~14, Tata Inst. Fund. Res.,
  Bombay, 1998, pp.~1--32. \MR{MR1699356}

\bibitem[BL07]{butterley_liverani}
Oliver Butterley and Carlangelo Liverani, \emph{Smooth {A}nosov flows:
  correlation spectra and stability}, J. Mod. Dyn. \textbf{1} (2007), 301--322.
  \MR{MR2285731}

\bibitem[BT07]{bt_aniso}
Viviane Baladi and Masato Tsujii, \emph{Anisotropic {H}\"older and {S}obolev
  spaces for hyperbolic diffeomorphisms}, Ann. Inst. Fourier (Grenoble)
  \textbf{57} (2007), 127--154. \MR{MR2313087}

\bibitem[CM82]{casselman_milicic}
William Casselman and Dragan Mili{\v{c}}i{\'c}, \emph{Asymptotic behavior of
  matrix coefficients of admissible representations}, Duke Math. J. \textbf{49}
  (1982), 869--930. \MR{MR683007}

\bibitem[Dix69]{dixmier}
Jacques Dixmier, \emph{Les {$C\sp{\ast} $}-alg\`ebres et leurs
  repr\'esentations}, Deuxi\`eme \'edition. Cahiers Scientifiques, Fasc. XXIX,
  Gauthier-Villars \'Editeur, Paris, 1969. \MR{MR0246136}

\bibitem[EM01]{eskin_masur}
Alex Eskin and Howard Masur, \emph{Asymptotic formulas on flat surfaces},
  Ergodic Theory Dynam. Systems \textbf{21} (2001), 443--478. \MR{MR1827113}

\bibitem[EM09]{every_curve_is_teich}
Jordan~S. Ellenberg and David~B. McReynolds, \emph{Every curve is a
  {T}eichmuller curve}, Preprint, 2009.

\bibitem[For02]{forni_deviation}
Giovanni Forni, \emph{Deviation of ergodic averages for area-preserving flows
  on surfaces of higher genus}, Ann. of Math. (2) \textbf{155} (2002), 1--103.
  \MR{MR1888794}

\bibitem[Fri86]{fried_analytique}
David Fried, \emph{The zeta functions of {R}uelle and {S}elberg. {I}}, Ann.
  Sci. \'Ecole Norm. Sup. (4) \textbf{19} (1986), 491--517. \MR{MR875085}

\bibitem[GL06]{gouezel_liverani}
S{\'e}bastien Gou{\"e}zel and Carlangelo Liverani, \emph{Banach spaces adapted
  to {A}nosov systems}, Ergodic Theory Dynam. Systems \textbf{26} (2006),
  189--217. \MR{MR2201945}

\bibitem[Hej83]{hejhal_2}
Dennis~A. Hejhal, \emph{The {S}elberg trace formula for {${\rm PSL}(2,\,{\bf
  R})$}. {V}ol. 2}, Lecture Notes in Mathematics, vol. 1001, Springer-Verlag,
  Berlin, 1983. \MR{MR711197}

\bibitem[Hel00]{helgason2}
Sigurdur Helgason, \emph{Groups and geometric analysis}, Mathematical Surveys
  and Monographs, vol.~83, American Mathematical Society, Providence, RI, 2000,
  Integral geometry, invariant differential operators, and spherical functions,
  Corrected reprint of the 1984 original. \MR{MR1790156}

\bibitem[Hen93]{hennion}
Hubert Hennion, \emph{Sur un th\'eor\`eme spectral et son application aux
  noyaux lipchitziens}, Proc. Amer. Math. Soc. \textbf{118} (1993), 627--634.
  \MR{MR1129880}

\bibitem[HM79]{howe_moore}
Roger~E. Howe and Calvin~C. Moore, \emph{Asymptotic properties of unitary
  representations}, J. Funct. Anal. \textbf{32} (1979), 72--96. \MR{MR533220}

\bibitem[H{\"o}r03]{hormander}
Lars H{\"o}rmander, \emph{The analysis of linear partial differential
  operators. {I}}, Classics in Mathematics, Springer-Verlag, Berlin, 2003,
  Distribution theory and Fourier analysis, Reprint of the second (1990)
  edition. \MR{MR1996773}

\bibitem[Iwa95]{Iwaniec}
Henryk Iwaniec, \emph{Introduction to the spectral theory of automorphic
  forms}, Biblioteca de la Revista Matem\'atica Iberoamericana. [Library of the
  Revista Matem\'atica Iberoamericana], Revista Matem\'atica Iberoamericana,
  Madrid, 1995. \MR{MR1325466}

\bibitem[Kat66]{kato_pe}
Tosio Kato, \emph{Perturbation theory for linear operators}, Die Grundlehren
  der mathematischen Wissenschaften, Band 132, Springer-Verlag New York, Inc.,
  New York, 1966. \MR{MR0203473}

\bibitem[Kna01]{knapp}
Anthony~W. Knapp, \emph{Representation theory of semisimple groups}, Princeton
  Landmarks in Mathematics, Princeton University Press, Princeton, NJ, 2001, An
  overview based on examples, Reprint of the 1986 original. \MR{MR1880691}

\bibitem[KS09]{sarnak_strong_spectral_gap}
Dubi Kelmer and Peter Sarnak, \emph{Strong spectral gaps for compact quotients
  of products of {$\mathrm{PSL}(2,\mathbb{R})$}}, J. Eur. Math. Soc. (JEMS)
  \textbf{11} (2009), 283--313. \MR{MR2486935}

\bibitem[KZ03]{Kontsevich_Zorich}
Maxim Kontsevich and Anton Zorich, \emph{Connected components of the moduli
  spaces of {A}belian differentials with prescribed singularities}, Invent.
  Math. \textbf{153} (2003), 631--678. \MR{MR2000471}

\bibitem[Lan08]{Lanneau_connected}
Erwan Lanneau, \emph{Connected components of the strata of the moduli spaces of
  quadratic differentials}, Ann. Sci. \'Ec. Norm. Sup\'er. (4) \textbf{41}
  (2008), 1--56. \MR{MR2423309}

\bibitem[Liv04]{liverani_contact}
Carlangelo Liverani, \emph{On contact {A}nosov flows}, Ann. of Math. (2)
  \textbf{159} (2004), 1275--1312. \MR{MR2113022}

\bibitem[Mas82]{Masur}
Howard Masur, \emph{Interval exchange transformations and measured foliations},
  Ann. of Math. (2) \textbf{115} (1982), 169--200. \MR{MR644018}

\bibitem[Mat95]{mattila}
Pertti Mattila, \emph{Geometry of sets and measures in {E}uclidean spaces},
  Cambridge Studies in Advanced Mathematics, vol.~44, Cambridge University
  Press, Cambridge, 1995, Fractals and rectifiability. \MR{MR1333890}

\bibitem[McM07]{mcmullen}
Curtis~T. McMullen, \emph{Dynamics of {${\rm SL}_2(\mathbb{R})$} over moduli
  space in genus two}, Ann. of Math. (2) \textbf{165} (2007), 397--456.
  \MR{MR2299738}

\bibitem[M{\"o}l08]{moller_linear}
Martin M{\"o}ller, \emph{Linear manifolds in the moduli space of one-forms},
  Duke Math. J. \textbf{144} (2008), 447--487. \MR{MR2444303}

\bibitem[Rat87]{ratner}
Marina Ratner, \emph{The rate of mixing for geodesic and horocycle flows},
  Ergodic Theory Dynam. Systems \textbf{7} (1987), 267--288. \MR{MR896798}

\bibitem[Rat92]{Ratner_sl}
\bysame, \emph{Raghunathan's conjectures for {$\mathrm{SL}(2,\mathbf{R})$}},
  Israel J. Math. \textbf{80} (1992), 1--31. \MR{MR1248925}

\bibitem[Sel65]{Selberg}
Atle Selberg, \emph{On the estimation of {F}ourier coefficients of modular
  forms}, Proc. {S}ympos. {P}ure {M}ath., {V}ol. {VIII}, Amer. Math. Soc.,
  Providence, R.I., 1965, pp.~1--15. \MR{MR0182610}

\bibitem[Vee82]{Veech_flow}
William~A. Veech, \emph{Gauss measures for transformations on the space of
  interval exchange maps}, Ann. of Math. (2) \textbf{115} (1982), 201--242.
  \MR{MR644019}

\end{thebibliography}
\bibliographystyle{amsalpha}

\end{document}